\newtheorem{thm}{Theorem}
\newtheorem*{thm*}{Theorem}
\newtheorem{lem}{Lemma}
\newtheorem{con}{Conjecture}
\newtheorem{cor}{Corollaries}
\theoremstyle{definition}
\newtheorem{defn}{Definition}
\theoremstyle{remark}
\DeclareMathOperator{\conv}{conv}
\DeclareMathOperator{\cent}{cent}
\DeclareMathOperator{\suppo}{supp}
\DeclareMathOperator{\bd}{bd} \DeclareMathOperator{\cl}{cl}
\newcommand{\pt}{{\rm pt}}
\renewcommand{\int}{\mathop{\rm int}}
\renewcommand{\epsilon}{\varepsilon}
\begin{document}
\title{Dual central point theorems and their generalizations}
\author{R.N.~Karasev}

\begin{abstract}
We prove some analogues of the central point theorem and Tverberg's theorem, where instead of points, we consider hyperplanes or affine flats of given dimension.
\end{abstract}

\subjclass[2000]{52A35,52C35,55M35}
\keywords{Tverberg's theorem, central point theorem}

\maketitle

\section{Introduction}

Let us state some classical theorems of convex and combinatorial geometry, that are generalized and extended in this paper.

The first statement is the Neumann-Rado theorem~\cite{neumann1945,rado1946,grun1960} (see also the reviews~\cite{eck1993} and~\cite{dgk1963}) about the central point of a measure.

\begin{thm*}[The central point theorem]
For an absolutely continuous probabilistic measure $\mu$ on $\mathbb R^d$ there exists a point $x\in \mathbb R^d$ such that for any halfspace $H\ni x$ 
$$
\mu(H)\ge\dfrac{1}{d+1}.
$$
\end{thm*}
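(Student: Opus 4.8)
The plan is to derive this from Helly's theorem by the standard ``dual'' reformulation. First I would reduce the statement to the following equivalent one: there exists a point $x$ lying in \emph{every} closed halfspace $K$ with $\mu(K)>\tfrac{d}{d+1}$. Let me check that such an $x$ works. Suppose some closed halfspace $H\ni x$ had $\mu(H)<\tfrac1{d+1}$. If $x\in\int H$, let $K$ be the closure of $\mathbb R^d\setminus H$; this is a closed halfspace with $\mu(K)=1-\mu(H)>\tfrac{d}{d+1}$, where absolute continuity is used to discard the bounding hyperplane, which is $\mu$-null. Then $x\in K$ by assumption, contradicting $x\in\int H$. If instead $x$ lies on the bounding hyperplane of $H$, I would enlarge $H$ slightly to a closed halfspace $H'$ with $x\in\int H'$; since $\mu$ is a finite measure, $\mu(H')\to\mu(H)$ as the enlargement shrinks, so $\mu(H')<\tfrac1{d+1}$ for a small enough enlargement, reducing to the previous case.

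Next I would set up the family for Helly. Let $\mathcal K$ consist of all closed halfspaces $K$ with $\mu(K)>\tfrac{d}{d+1}$, together with one fixed closed ball $B$ chosen large enough that $\mu(B)>\tfrac{d}{d+1}$ (possible because $\mu$ is a probability measure). For any members $C_1,\dots,C_{d+1}$ of $\mathcal K$, each complement $\mathbb R^d\setminus C_i$ has $\mu$-measure $<\tfrac1{d+1}$, so $\mu\bigl(\bigcup_{i=1}^{d+1}(\mathbb R^d\setminus C_i)\bigr)<1$, and hence $\bigcap_{i=1}^{d+1} C_i$ has positive $\mu$-measure; in particular it is nonempty.

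Finally I would invoke Helly's theorem in the form valid for an arbitrary family of closed convex subsets of $\mathbb R^d$ one of which is compact: since $B\in\mathcal K$ is compact and every $d+1$ members of $\mathcal K$ have a common point, $\bigcap\mathcal K\neq\varnothing$. Any $x\in\bigcap\mathcal K$ lies in every closed halfspace of $\mu$-measure exceeding $\tfrac{d}{d+1}$, so it satisfies the conclusion by the reduction above.

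The main obstacle I anticipate is that the halfspaces in $\mathcal K$ are unbounded, so the naive infinite version of Helly does not apply directly; this is precisely what the auxiliary large ball $B$ is for, and the only thing to verify is that adjoining $B$ does not spoil the ``every $d+1$ have a common point'' property, which is again guaranteed by the same measure-counting estimate using $\mu(B)>\tfrac{d}{d+1}$. The other place needing a little care is the boundary case in the reduction, handled above by an infinitesimal enlargement of the halfspace together with continuity of the finite measure $\mu$.
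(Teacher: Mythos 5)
Your proof is correct: the reduction to membership in all closed halfspaces of measure exceeding $\tfrac{d}{d+1}$, the measure-counting verification that every $d+1$ such halfspaces intersect, and the passage to the infinite family via an auxiliary large ball and the compact version of Helly's theorem are all sound, including the boundary case handled by continuity from above of the finite measure. The paper itself does not prove this classical statement but only cites it (Neumann, Rado, Gr\"unbaum), and your Helly-based argument is precisely the standard proof found in those sources, so there is nothing to contrast.
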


There is also a discrete version of this theorem. 

\begin{thm*}[The discrete central point theorem]
Suppose $X\subset\mathbb R^d$ is a finite set with $|X|=n$. Then there exists $x\in\mathbb R^d$ such that for any halfspace $H\ni x$
$$
|H\cap X| \ge \left\lfloor\dfrac{n+d}{d+1}\right\rfloor.
$$
\end{thm*}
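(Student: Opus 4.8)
The plan is to reduce the existence of a central point to Helly's theorem. Put $k=\left\lfloor\frac{n+d}{d+1}\right\rfloor$. First I would reformulate the desired conclusion: a point $x$ satisfies ``$|H\cap X|\ge k$ for every closed halfspace $H$ with $x\in H$'' if and only if $x\in\conv(Y)$ for every subset $Y\subseteq X$ with $|Y|=n-k+1$. Indeed, if some closed halfspace $H\ni x$ has $|H\cap X|\le k-1$, then at least $n-k+1$ points of $X$ lie strictly inside the complementary open halfspace, and the convex hull of any $n-k+1$ of them lies strictly in that open halfspace, hence does not contain $x$; conversely, if $x\notin\conv(Y)$ for some $Y$ with $|Y|=n-k+1$, a strictly separating hyperplane produces a closed halfspace $H\ni x$ disjoint from $Y$, so $|H\cap X|\le n-|Y|=k-1$. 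Since $\conv$ is monotone under inclusion, it suffices to treat the sets $Y$ of size exactly $n-k+1$, and one checks $1\le n-k+1\le n$ for every $n\ge 1$.

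Next I would show that $\bigcap\{\conv(Y):Y\subseteq X,\ |Y|=n-k+1\}\neq\emptyset$ by applying Helly's theorem in $\mathbb R^d$: this is a finite family of convex sets, so it is enough to prove that any $d+1$ of them have a common point. Given $Y_1,\dots,Y_{d+1}$, each of size $n-k+1$, each complement $X\setminus Y_i$ has size $k-1$, so $|X\setminus(Y_1\cap\dots\cap Y_{d+1})|\le(d+1)(k-1)$, whence $|Y_1\cap\dots\cap Y_{d+1}|\ge n-(d+1)(k-1)$. The choice $k=\left\lfloor\frac{n+d}{d+1}\right\rfloor$ gives $(d+1)(k-1)\le n-1$, so this intersection contains a point $p\in X$, and then $p\in\conv(Y_i)$ for all $i$. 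Helly's theorem then provides a point $x$ lying in every member of the family, which is the sought central point.

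All steps are short, and the only genuine content is the arithmetic fact $(d+1)\left(\left\lfloor\frac{n+d}{d+1}\right\rfloor-1\right)\le n-1$, i.e.\ that the floor is exactly the threshold that makes the Helly counting succeed; I would verify this directly from $(d+1)k\le n+d$. I expect the point requiring the most care to be the reformulation step: keeping the strict-versus-closed halfspace bookkeeping straight, so that a point on the bounding hyperplane of $H$ still counts as lying in $H$ while the points strictly outside $H$ have convex hull strictly outside $H$, which is what makes the equivalence and the separation argument go through. As an alternative route, the same value $k=\left\lfloor\frac{n+d}{d+1}\right\rfloor$ and conclusion follow from Tverberg's theorem: a Tverberg partition of $X$ into $k$ parts has a common point $x$ of the $k$ convex hulls, and any closed halfspace containing $x$ must meet each part, hence at least $k$ points of $X$.
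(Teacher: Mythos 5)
Your proof is correct: the equivalence between being a central point and lying in $\bigcap\{\conv Y : Y\subseteq X,\ |Y|=n-k+1\}$ is handled with the right closed/open halfspace bookkeeping, and the counting $(d+1)(k-1)\le n-1$ makes the Helly step work. The paper itself states this classical theorem without proof, but your Helly-based argument is essentially the same one the paper embeds in its proof of Lemma~\ref{cptfp}, where the nonemptiness of exactly such an intersection of convex hulls of $(n-l+1)$-point subsets is deduced from Helly's theorem, so your route matches the paper's underlying approach.
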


An important generalization of this theorem is proved in~\cite{tver1966}.

\begin{thm*}[Tverberg's theorem]
Consider a set $X\in \mathbb R^d$ with $|X|=(d+1)(n-1)+1$. Then $X$ can be partitioned into $n$ subsets $X_1, \ldots, X_n$ so that
$$
\bigcap_{i=1}^n \conv X_i\not=\emptyset.
$$
\end{thm*}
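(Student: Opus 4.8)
Since Tverberg's theorem is classical, I will only indicate a strategy. The plan is to derive it from the colorful Carath\'eodory theorem via Sarkaria's tensor trick; one could instead try to route through the topological Tverberg theorem, but that is technically heavier and for general $n$ not known, so I avoid it.

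First I would set up the reduction. Put $N=(d+1)(n-1)+1$ and $X=\{x_1,\dots,x_N\}$, lift each point to $\bar x_k=(x_k,1)\in\mathbb R^{d+1}$, and fix vectors $v_1,\dots,v_n\in\mathbb R^{n-1}$ that are affinely independent with $v_1+\dots+v_n=0$ (vertices of a simplex centered at the origin). For each $k$ introduce the color class
$$
C_k=\{\,\bar x_k\otimes v_i : i=1,\dots,n\,\}\subset\mathbb R^{d+1}\otimes\mathbb R^{n-1}\cong\mathbb R^{(d+1)(n-1)}=\mathbb R^{N-1}.
$$
Because $\frac1n\sum_{i=1}^n\bar x_k\otimes v_i=\bar x_k\otimes\bigl(\tfrac1n\sum_i v_i\bigr)=0$, the origin lies in $\conv C_k$ for every $k$; and there are exactly $N$ color classes in a space of dimension $N-1$, which is precisely what the colorful Carath\'eodory theorem requires.

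Then I would apply colorful Carath\'eodory to obtain a map $\sigma\colon\{1,\dots,N\}\to\{1,\dots,n\}$ and weights $\lambda_k\ge 0$ with $\sum_k\lambda_k=1$ and $\sum_{k=1}^N\lambda_k\,\bar x_k\otimes v_{\sigma(k)}=0$. Setting $X_i=\{x_k:\sigma(k)=i\}$ and $w_i=\sum_{k:\sigma(k)=i}\lambda_k\bar x_k\in\mathbb R^{d+1}$ gives $\sum_{i=1}^n w_i\otimes v_i=0$. Since the only linear dependence among $v_1,\dots,v_n$ is a multiple of $v_1+\dots+v_n$, this forces $w_1=\dots=w_n=:w$. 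Reading off the last coordinate shows the total weight $\sum_{k:\sigma(k)=i}\lambda_k$ is the same for every $i$, hence equals $1/n>0$, so each $X_i$ is nonempty; and if $z\in\mathbb R^d$ denotes the vector of the first $d$ coordinates of $nw$, then $z$ is a convex combination of the points of $X_i$ for each $i$, so $z\in\bigcap_{i=1}^n\conv X_i$.

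The crux is the colorful Carath\'eodory theorem itself, which I would prove by the usual extremal argument: among all rainbow selections $\{\bar x_k\otimes v_{\sigma(k)}\}$, take one whose convex hull is closest to the origin, and if that hull misses the origin, use a separating hyperplane and a single-color swap to produce a rainbow selection strictly closer to the origin, contradicting minimality. A minor but essential bookkeeping point is the role of the homogenizing last coordinate: it is exactly what simultaneously forces the partition classes to be nonempty and places the common point in $\mathbb R^d$ rather than merely in the lifted space. Finally, the tightness of the count $(d+1)(n-1)+1$ reflects the tightness of the number $m+1$ of colors needed in $m$-dimensional colorful Carath\'eodory, here with $m=(d+1)(n-1)$.
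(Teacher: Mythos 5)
Your reduction is correct and, as far as I can check, complete except for the deliberately sketched proof of the colorful Carath\'eodory theorem: the lifted points $\bar x_k=(x_k,1)$, the tensor classes $C_k=\{\bar x_k\otimes v_i\}$ in $\mathbb R^{N-1}$ with $N=(d+1)(n-1)+1$, the count of $N$ classes in dimension $N-1$, the deduction $w_1=\dots=w_n$ from the fact that the only dependence among the $v_i$ is the all-ones one, and the use of the last coordinate to force each class to carry weight exactly $1/n$ (hence be nonempty) and to recover the common point $z\in\bigcap_i\conv X_i$ are all sound; the extremal ``nearest rainbow simplex plus single-color swap'' argument you outline for colorful Carath\'eodory is the standard one and can be completed (the one detail you gloss over is that the representative you swap out must be chosen of a color not needed in a Carath\'eodory representation of the nearest point, so that the swap strictly decreases the distance). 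Note, however, that this statement is quoted in the paper as background, with a citation to Tverberg's original article \cite{tver1966}, and is not proved there; so your route (Sarkaria's tensor trick reducing Tverberg to colorful Carath\'eodory, proved by B\'ar\'any's extremal argument) is genuinely different from anything in the paper. The contrast is instructive: the paper's own Tverberg-type results (Theorems~\ref{dualtverbergp} and~\ref{coldualtverberg}) are obtained by topological obstruction arguments --- equivariant Euler classes on deleted joins --- which is why they require the number of parts to be a prime power, whereas the convex-geometric argument you give proves the classical point version for arbitrary $n$, but has no known analogue for the dual (hyperplane) statements, which is precisely why the paper resorts to topology and leaves the general dual Tverberg conjecture open.
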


In the papers~\cite{zivvre1990,dol1993} the central point theorem was extended to the case of several measures.

\begin{thm*}[The central transversal theorem]
Suppose $m+1$ absolutely continuous probabilistic measures $\mu_0,\ldots, \mu_m$ are given on $\mathbb R^d$. Then there exist an affine $m$-flat $L\in\mathbb R^d$ such that for any halfspace $H\supseteq L$ and any $i=0,\ldots,m$ 
$$
\mu_i(H)\ge\dfrac{1}{d-m+1}.
$$
\end{thm*}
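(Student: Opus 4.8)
The plan is to reduce the statement to the classical central point theorem by projecting out the direction of the sought flat, and then to use equivariant topology to locate the correct projection.

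First I would set up the reduction. Let $L = v_0 + V$ be an affine $m$-flat, with $V$ its $m$-dimensional direction space; put $W := V^\perp$ and $n := d - m$, so $\dim W = n$. A closed halfspace $H = \{y : \langle y, u\rangle \ge c\}$ with $|u| = 1$ contains $L$ precisely when $u \in W$ and $c \le \langle v_0, u\rangle$, and for such $u$ one has $\langle y, u\rangle = \langle \pi_W y, u\rangle$, where $\pi_W \colon \mathbb{R}^d \to W$ is orthogonal projection. Hence $\mu_i(H) = (\pi_W)_*\mu_i(\{z \in W : \langle z, u\rangle \ge c\})$, the worst such $H$ being the one with $c = \langle v_0, u\rangle = \langle \pi_W v_0, u\rangle$. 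So the requirement ``$\mu_i(H) \ge \frac{1}{n+1}$ for every halfspace $H \supseteq L$ and every $i$'' is exactly the assertion that $\pi_W(v_0) \in W \cong \mathbb{R}^n$ is a common center point (in the Neumann--Rado sense, with constant $\frac{1}{n+1}$) of the $m+1$ projected measures $(\pi_W)_*\mu_0, \dots, (\pi_W)_*\mu_m$. Since $m$-flats correspond bijectively to pairs $(W, p)$ with $W \in G_n(\mathbb{R}^d)$ and $p \in W$, it suffices to exhibit one subspace $W$ for which these $m+1$ projected measures admit a common center point. For $m = 0$ there is nothing to choose and this is already the central point theorem.

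Next, for $W \in G_n(\mathbb{R}^d)$ let $C_i(W) \subseteq W$ be the set of center points of $(\pi_W)_*\mu_i$; by the central point theorem each $C_i(W)$ is nonempty, and it is closed and convex, being an intersection of halfspaces of $W$. Since the $\mu_i$ are absolutely continuous, the Tukey-depth functions $(W, p) \mapsto \inf\{(\pi_W)_*\mu_i(H) : H \ni p\}$ are jointly continuous, so $W \mapsto C_i(W)$ is upper semicontinuous with compact values. The goal is thus to show that $\bigcap_{i=0}^{m} C_i(W) \neq \emptyset$ for at least one $W \in G_n(\mathbb{R}^d)$. Suppose not. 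Then inside each $W \cong \mathbb{R}^n$ the emptiness of $\bigcap_i C_i(W)$ is witnessed, via Helly's theorem and linear-programming duality, by at most $n+1$ unit vectors $u_1, \dots, u_k \in W$, colors $i_1, \dots, i_k$, and positive weights $\lambda_j$ with $\sum_j \lambda_j u_j = 0$ and $\sum_j \lambda_j h_{i_j}(W, u_j) < 0$, where $h_i(W, \cdot)$ is the $\frac{1}{n+1}$-depth support function of $(\pi_W)_*\mu_i$. Collecting these witnesses into a space mapping to $G_n(\mathbb{R}^d)$, pulling back along the orthonormal $n$-frame bundle $V_n(\mathbb{R}^d) \to G_n(\mathbb{R}^d)$, and extracting a continuous selection, one would obtain an equivariant map (for the $O(n)$-action on frames, or at least for a $\mathbb{Z}/2$-subaction) from a sphere bundle inside $V_n(\mathbb{R}^d)$ to a sphere: for $m = 1$ this is just a nowhere-zero tangent field $v$ on $S^{d-1}$ subject to $v(-a) = v(a)$, which cannot exist since the relevant rank-$(d-1)$ bundle over $\mathbb{RP}^{d-1}$ has nonvanishing top Stiefel--Whitney class $x^{d-1}$. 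Such a map is excluded in general by a parametrized Borsuk--Ulam theorem (for instance through a Fadell--Husseini index computation, or the non-vanishing of an appropriate Stiefel--Whitney class of the tautological bundle over $G_n(\mathbb{R}^d)$ or of its complement), and this contradiction yields the required $W$.

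The \emph{main obstacle} is this final topological step, and it has two faces. The first is that the emptiness certificate is wildly non-unique, so one cannot simply ``read it off'' continuously in $W$: one must either carry along the whole configuration space of certificates and control its connectivity / equivariant index, or first pass to a generic subfamily of the $(W,u)$-data on which the certificate becomes essentially canonical, while ensuring the perturbation stays compatible with the symmetry. The second is the cohomological heart: isolating the symmetry group acting on the configuration space, computing its index (or $\mathbb{Z}/2$-genus), and verifying that it strictly dominates that of the target sphere. That the two extreme cases $m = 0$ (pure Helly, no topology) and $m = d-1$ (pure ham-sandwich, i.e.\ Borsuk--Ulam) already appear inside the statement signals that the general argument must genuinely interpolate between a Helly-type and a Borsuk--Ulam-type mechanism, which is the delicate point.
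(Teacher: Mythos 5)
Your reduction is the right one, and it is the same reduction that underlies the literature on this statement: an affine $m$-flat $L$ has the required property if and only if, writing $W=L^{\perp}\in G_d^{d-m}$ and $n=d-m$, the point $\pi_W(L)$ is a common central point (with constant $\frac1{n+1}$) of the $m+1$ projected measures $(\pi_W)_*\mu_0,\dots,(\pi_W)_*\mu_m$; so it suffices to produce one $W$ with $\bigcap_{i=0}^m\cent\bigl((\pi_W)_*\mu_i\bigr)\neq\emptyset$. Note that the paper does not prove this theorem itself (it is quoted from \cite{zivvre1990,dol1993}); what it proves is the dual analogue, Theorem~\ref{dualctr}, and that proof, like Dol'nikov's original one, is the natural template against which to measure your plan.

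The genuine gap is the entire topological half. After the reduction you argue by contradiction through Helly/LP-duality ``emptiness certificates,'' but you never construct the configuration space of certificates, never fix the group action or the target sphere, and never prove the index or characteristic-class inequality; the sentence ``such a map is excluded in general by a parametrized Borsuk--Ulam theorem'' is exactly the assertion that needs proof, and you yourself flag the non-uniqueness of certificates and the index computation as unresolved obstacles, so as written this is a program, not an argument. A concrete way to close it -- and the one that matches the paper's toolkit -- avoids certificates altogether: after first passing to compactly supported approximations (your continuity claims also silently need compact support), replace $\cent$ by the relaxed depth regions $\{x\in W:\ \min_{H\ni x}(\pi_W)_*\mu_i(H)\ge\frac1{n+1}-\epsilon\}$, which have nonempty interior and vary continuously in $W$ in the Hausdorff metric (by the arguments of Lemmas~\ref{halfspacemes}, \ref{minmescont}, \ref{cptcont1}, \ref{cptcont2}, and the $\epsilon$-enlargement device of Lemma~\ref{cptfp}); choose continuous selections $c_i(W)$ in them and view the $m$ differences $c_i(W)-c_0(W)$, $i=1,\dots,m$, as a section of the $m$-fold Whitney sum of the canonical bundle $\gamma_d^{\,n}\to G_d^{\,n}$. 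Since $w_n(\gamma_d^{\,n})^m\neq0$ (Theorem~\ref{fpgrass}), that sum admits no nowhere-zero section, so some $W$ makes all differences vanish; letting $\epsilon\to0$ and using compactness yields a common central point and hence the desired $m$-flat. This is essentially Dol'nikov's proof and is the exact primal counterpart of the paper's proof of Theorem~\ref{dualctr} (which uses Theorem~\ref{fpbundles2} in place of the plain section argument); your Fadell--Husseini/certificate route is not obviously wrong, but the step that would make it a proof is missing.
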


In the paper~\cite{zivvre1994} a topological approach was applied to prove the following analogue of Tverberg's theorem.

\begin{thm*}[The colorful Tverberg theorem]
Let a subset $X\subset\mathbb R^d$ consist of $|X|=(d+1)t$ points, where $t\ge 2r-1$, $r=p^k$, $p$ is a prime. Let the points of $X$ be colored into $d+1$ colors, each color having $t$ points exactly.

Then we can choose $r$ disjoint subsets $X_1,\ldots, X_r\subset X$ so that the following conditions hold. For any $i=1,\ldots, r$ we have $|X_i|=d+1$, $X_i$ has all the $d+1$ colors. And
$$
\bigcap_{i=1}^r \conv X_i \not=\emptyset.
$$
\end{thm*}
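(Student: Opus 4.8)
The plan is to run the configuration space / test map reduction that is standard for Tverberg-type statements, with a join of chessboard complexes playing the part that the deleted join of a simplex plays in the topological Tverberg theorem, and with the bound $t\ge 2r-1$ inserted precisely so that this configuration space is connected enough for a Borsuk--Ulam-type conclusion.

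First I would set up the test map. Write $C_1,\dots,C_{d+1}$ for the colour classes, each of size $t$, let $K=C_1*\cdots*C_{d+1}$ be the join of the corresponding discrete $t$-point sets, whose faces are exactly the rainbow faces (at most one point of each colour), and let $f\colon K\to\mathbb R^d$ be the map sending each vertex to its location in $\mathbb R^d$, extended affinely on faces. On the $r$-fold $2$-wise deleted join $K^{*r}_\Delta$, a point of which is a formal combination $\sum_{i=1}^r\lambda_iz_i$ with $\lambda_i\ge 0$, $\sum_i\lambda_i=1$, $z_i\in K$ and pairwise disjoint supports, define
$$
\Phi\colon K^{*r}_\Delta\longrightarrow W_r\otimes\mathbb R^{d+1},\qquad
\Phi\Bigl(\sum_{i=1}^r\lambda_iz_i\Bigr)
=\Bigl(\lambda_i\bigl(f(z_i),1\bigr)-\tfrac1r\sum_{j=1}^r\lambda_j\bigl(f(z_j),1\bigr)\Bigr)_{i=1}^{r},
$$
where $W_r=\{x\in\mathbb R^r:\sum_i x_i=0\}$. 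A zero of $\Phi$ makes all the vectors $\lambda_i\bigl(f(z_i),1\bigr)$ equal, hence, reading off the last coordinate, forces $\lambda_i=1/r$ for all $i$ and $f(z_1)=\cdots=f(z_r)$; then the faces $\suppo z_1,\dots,\suppo z_r$ are $r$ pairwise disjoint rainbow faces of $K$ whose convex hulls contain a common point. Since $t\ge r$, such a family can always be completed to $r$ pairwise disjoint faces each carrying all $d+1$ colours --- for every colour missing from some $\suppo z_i$ pick a hitherto unused point of the corresponding $C_j$, which is possible because at most $r\le t$ points of each colour are ever needed --- and this only enlarges the convex hulls, so it yields the sets $X_1,\dots,X_r$ of the theorem. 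Thus it suffices to prove that $\Phi$ vanishes somewhere.

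Next I would estimate the connectivity of $K^{*r}_\Delta$. Since the deleted join of a join is the join of the deleted joins, $K^{*r}_\Delta\cong (C_1)^{*r}_\Delta*\cdots*(C_{d+1})^{*r}_\Delta$, and each factor $(C_j)^{*r}_\Delta$ is the chessboard complex $\Delta_{r,t}$. By the Bj\"{o}rner--Lov\'{a}sz--Vre\'{c}ica--\v{Z}ivaljevi\'{c} connectivity bound, $\Delta_{m,n}$ is $(\nu-2)$-connected with $\nu=\min\{m,n,\lfloor(m+n+1)/3\rfloor\}$; for $m=r$, $n=t$ and $t\ge 2r-1$ this gives $\nu=r$, so each factor is $(r-2)$-connected. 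Iterating $\operatorname{conn}(A*B)\ge\operatorname{conn}(A)+\operatorname{conn}(B)+2$ over the $d+1$ factors, the space $K^{*r}_\Delta$ is $\bigl((d+1)r-2\bigr)$-connected. This is the only step in which the hypothesis $t\ge 2r-1$ is used.

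Finally I would derive a contradiction from the assumption that $\Phi$ is nowhere zero. Writing $r=p^k$, let $G=(\mathbb Z/p)^k$, identified with the index set $\{1,\dots,r\}$ through its regular action, act on $K^{*r}_\Delta$ by permuting the $r$ blocks and on $W_r\otimes\mathbb R^{d+1}$ through the action on $W_r\cong\mathbb R[G]\ominus\mathbb R$. This action on $K^{*r}_\Delta$ is free: if $g\neq e$ fixed $\sum_i\lambda_iz_i$ then $z_{g^{-1}(i)}=z_i$ for an active block with $g^{-1}(i)\neq i$, contradicting the disjointness of supports. Normalizing, $x\mapsto\Phi(x)/\|\Phi(x)\|$ would be a $G$-map
$$
K^{*r}_\Delta\longrightarrow S\bigl(W_r\otimes\mathbb R^{d+1}\bigr)\cong S^{(d+1)(r-1)-1}.
$$
But $\bigl(W_r\otimes\mathbb R^{d+1}\bigr)^G=0$, so this sphere has no $G$-fixed point, while its cohomological dimension $(d+1)(r-1)-1$ does not exceed $(d+1)r-2$, the connectivity of the source; Volovikov's lemma for $(\mathbb Z/p)^k$-spaces (which for $k=1$ is just Dold's theorem, the sphere then being a free $\mathbb Z/p$-space) rules out such a map, and the proof is complete. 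The routine parts here are the test-map bookkeeping of the first step and the connectivity computation of the second, granting the chessboard-complex connectivity, which is the essential external ingredient. The main obstacle is this last step in the prime-power case $k\ge 2$: then the target is not a free $(\mathbb Z/p)^k$-sphere, so one must leave the Borsuk--Ulam / Dold setting and argue instead with the ideal-valued cohomological $G$-index, the non-vanishing of the mod-$p$ Euler class $e\bigl(W_r\otimes\mathbb R^{d+1}\bigr)\in H^*\bigl(B(\mathbb Z/p)^k;\mathbb F_p\bigr)$ being exactly the place where $r=p^k$ is genuinely needed --- it would fail for an $r$ divisible by two distinct primes.
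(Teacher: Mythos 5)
Your proposal is correct and follows essentially the same route as the proof the paper relies on: the statement is quoted from \cite{zivvre1994}, and the paper's Section~\ref{tvtconstr} recalls precisely your configuration space (the $r$-fold deleted join $K(d+1,t)^{*r}_\Delta$, i.e.\ a join of chessboard complexes, $\bigl((d+1)r-2\bigr)$-connected for $t\ge 2r-1$) together with the $(\mathbb Z_p)^k$-equivariant obstruction --- the nonvanishing Euler class of $W_r\otimes\mathbb R^{d+1}$ --- which is what your Volovikov/ideal-valued-index step encodes. Apart from notation, your test map, connectivity count, and prime-power argument coincide with the standard proof cited there and with the machinery the paper reuses for its dual colorful Tverberg theorem (Theorem~\ref{coldualtverberg}).
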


The proofs of the central transversal theorem and the colorful Tverberg theorem made essential use of topology and calculating some obstructions. In this paper similar methods are used.

The paper is organized as follows.

\begin{itemize}

\item
Section~\ref{results} contains the statements of the main results. 

\item
In Sections~\ref{measures}, \ref{centralpoints} we remind some definitions and lemmas from the measure theory, and prove some theorems on the set of central points of a given measure. 

\item
In Section~\ref{fixedpoints} we generalize the Brouwer fixed point theorem for families of fiberwise maps of a vector bundle, Sections~\ref{releuler}, \ref{fixedproofs} contain the respective proofs.

\item
In Sections~\ref{fpcplemma}, \ref{cptproofs} we give proofs for the dual central point and the dual central transversal theorems. 

\item
Sections~\ref{tvgconstr}, \ref{tvtconstr}, \ref{tvproofs} contain the proofs of Tverberg type theorems. 

\item
Section~\ref{conjectures} contains some conjectures, related to the results of the paper.
\end{itemize}

\section{Main results}
\label{results}

In this paper we prove some analogues of the central point theorem, where we consider hyperplanes or affine flats of given dimension instead of points. Note that some similar results for hyperplanes can be found in~\cite{ruhu1999}.

It should be noted that these results are not obtained from the original theorems for points by applying a polar transform or some other kind of duality. Let us give a short explanation: if we make a polar transform with center $0$ and then apply the corresponding theorem for points, then the inverse polar transform should by applied in another point (the central point) to keep the statement true. But the composition of such two transforms is not affine, it is projective in general and does not retain the relevant convexity structures. 

The proofs are mostly based on calculating some topological obstructions to nonzero sections of vector bundles, these obstructions being the Euler classes.

Let us state the dual central point theorem in the discrete case.

\begin{thm}
\label{dualcpt} Suppose $\mathcal F$ is a family of $n$ hyperplanes of general position in $\mathbb R^d$. Then there exists a point $x$ such that any ray starting at $x$ intersects at least 
$$
\left\lfloor\dfrac{n+d}{d+1}\right\rfloor
$$ 
hyperplanes of $\mathcal F$.
\end{thm}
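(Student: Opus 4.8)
The plan is to reduce the discrete statement to a continuous one and then to a fixed-point/Borsuk–Ulam type argument. First I would associate to the family $\mathcal F = \{h_1,\dots,h_n\}$ the function $f\colon\mathbb R^d\times S^{d-1}\to\mathbb Z$ counting, for a point $x$ and a direction $u$, the number of hyperplanes met by the ray $\{x+tu : t\ge 0\}$. For $x$ outside all hyperplanes, $f(x,u)+f(x,-u)$ counts, for each $h_i$, whether the full line through $x$ in direction $u$ meets $h_i$, which it does unless the line is parallel to $h_i$; since the hyperplanes are in general position, for generic $u$ this sum equals $n$. Thus finding $x$ with $f(x,u)\ge k$ for \emph{all} rays is equivalent to finding $x$ with $f(x,u)\le n-k$ for all $u$ as well, i.e. to a point that is ``balanced'' with respect to the hyperplane arrangement. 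The target value $k=\lfloor (n+d)/(d+1)\rfloor$ is exactly the discrete central-point bound, which suggests the correct dual object: the vector $(\text{signed distances to }h_1,\dots,h_n)$ or, better, a measure built from the normals.

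The key step is to encode the problem so that the central point theorem (or its proof technique via Euler classes, as announced in the introduction) applies. For each hyperplane $h_i$ with unit normal $\nu_i$, the quantity ``the ray from $x$ in direction $u$ crosses $h_i$'' depends only on the sign of $\langle u,\nu_i\rangle$ relative to the sign of the signed distance from $x$ to $h_i$. So I would build a discrete configuration in $\mathbb R^d$ (or on $S^{d-1}$) out of the normals $\pm\nu_i$, suitably weighted by the side of $h_i$ on which $x$ lies, and observe that the ray-crossing count is a halfspace count for that configuration. Concretely: push each $h_i$ to the pair of points at which its two normal rays meet the sphere at infinity, orient by the position of $x$; then a ray from $x$ missing $h_i$ corresponds to a point of this configuration lying in a fixed open halfspace determined by $u$. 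Applying the discrete central point theorem to the resulting $n$-point set on $S^{d-1}$ (or rather its radial version in $\mathbb R^d$) yields a direction-independent bound $\lfloor (n+d)/(d+1)\rfloor$ on the number of crossings — but the catch is that the configuration depends on $x$, so this must be run as a fixed-point argument over $x$ rather than a single application.

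Hence the real argument: define a set-valued or continuous map on a compactification of $\mathbb R^d$ (e.g. a large ball, or $\mathbb R^d$ itself with control at infinity) whose zeros/fixed points are exactly the sought balanced points $x$, and compute that the relevant topological obstruction — the Euler class of the associated vector bundle over $S^{d-1}$, in the spirit of Section~\ref{fixedpoints} and the $\mathbb Z/2$ or $\mathbb Z/(d+1)$ equivariant count used for the central transversal theorem — is nonzero, forcing a solution. The passage from a ``balanced at a single scale'' statement to the exact floor $\lfloor(n+d)/(d+1)\rfloor$ will come from the same combinatorial bookkeeping as in the discrete central point theorem: a minimal counterexample argument or a careful count of the arrangement cells. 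The main obstacle I anticipate is the behavior at infinity: a ray, unlike a full line, is a one-sided object, so the naive symmetrization $f(x,u)+f(x,-u)=n$ fails precisely on the measure-zero set of directions parallel to some $h_i$, and one must either perturb, or set up the fixed-point space so that these degenerate directions do not create spurious obstructions — in other words, checking that the Euler class computation is unaffected by, and the general-position hypothesis genuinely handles, these boundary directions is where the work lies.
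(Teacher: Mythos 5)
Your reduction is the right one: for $x\notin h_i$ the ray from $x$ in direction $u$ meets $h_i$ exactly when $\langle u,\pi_i(x)-x\rangle>0$, where $\pi_i$ is the orthogonal projection onto $h_i$, so the theorem amounts to finding a point $x$ that is a (strict) central point of the $x$-dependent configuration $\{\pi_i(x)\}_{i\in[n]}$, and you correctly see that this must be run as a fixed-point argument in $x$, with a perturbation/general-position step at the end to pass from closed to open halfspaces (the paper does exactly this last step by projecting the hyperplanes to open hemispheres on a unit sphere centered at $x$). Up to this point your plan agrees with the paper's proof in spirit.

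However, the proposal stops precisely where the actual work lies, and the two missing ingredients are not routine. First, you never define the map whose fixed point is the desired $x$: the paper takes, for each $x$, a point $g(x)\in\bigcap_{|I|=n-l+1}\conv\{\pi_i(x):i\in I\}$, where $l=\left\lfloor\frac{n+d}{d+1}\right\rfloor$ (nonempty by Helly, made continuous by an $\varepsilon$-thickening of the convex hulls), and applies the plain Brouwer theorem to $g$; no Euler class ``over $S^{d-1}$'' or equivariant obstruction is needed for this statement --- that machinery enters only in the measure, transversal and Tverberg-type results. Second, and more seriously, your ``large ball, or $\mathbb R^d$ itself with control at infinity'' does not supply a domain on which Brouwer (or any degree/obstruction argument) applies: an orthogonal projection onto a hyperplane can move a point farther from the center of a prescribed ball, so large balls are not invariant under the maps $\pi_i$, and nothing in your sketch controls where $g(x)$ lies when $x$ is far away. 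The paper resolves this with Lemma~\ref{projbounded} (Aharoni--Duchet--Wajnryb): there exists a compact convex body $B$ with $\pi_i(B)\subseteq B$ for all $i$; then the hypothesis of Lemma~\ref{cptfp} is satisfied and the fixed point exists inside $B$. Without this lemma, or an explicitly verified boundary condition of the kind required in Theorem~\ref{fpbundles2} (which you would still have to check for the discrete configuration of projections), the fixed-point step of your argument has no content, so as it stands the proposal has a genuine gap.
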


The general position of a family of hyperplanes in $\mathbb R^d$ means that any $d$ of them have exactly one common point, and any $d+1$ of them have no point in common.

There is also a central point theorem for measures on the set of hyperplanes. We need some definitions first.

\begin{defn}
We need the following identification of the set of affine $k$-flats in $\mathbb R^d$ with the total space $\gamma_d^{d-k}\to G_d^{d-k}$ of the canonical bundle over the Grassmann manifold of $d-k$-subspaces in $\mathbb R^d$. 

Any affine $k$-flat $L$ corresponds to the unique $d-k$-dimensional linear subspace $V$, orthogonal to $L$, and the unique point $V\cap L\in V$.
\end{defn}

\begin{defn}
Denote the set of affine $k$-flats, that intersect a given subset $X\subseteq\mathbb R^d$ by $I(X, k)$.
\end{defn}

Now we are ready to state the dual central point theorem for measures.

\begin{thm}
\label{dualcptmes} Consider an absolute continuous probabilistic measure $\mu$ on $\gamma_d^1$ with compact support. Then there exists a point $x$ such that for any ray $r$ starting at $x$ we have
$$
\mu (I(r, d-1))\ge\frac{1}{d+1}.
$$
\end{thm}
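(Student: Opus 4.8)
The plan is to deduce the measure version from the discrete version (Theorem \ref{dualcpt}) by a standard approximation and compactness argument, so the main work is to set things up carefully and to handle the passage to the limit. First I would observe that the conclusion we want — existence of a point $x$ such that $\mu(I(r,d-1)) \ge \tfrac{1}{d+1}$ for every ray $r$ at $x$ — can be rephrased: for a point $x$ and a direction $u \in S^{d-1}$, let $r_{x,u}$ be the ray from $x$ in direction $u$, and define $f(x) = \inf_{u \in S^{d-1}} \mu(I(r_{x,u}, d-1))$. We must show $\sup_x f(x) \ge \tfrac{1}{d+1}$, and in fact that the sup is attained. Since $\mu$ has compact support, a hyperplane missed by a long ray contributes nothing, so the relevant set of hyperplanes is effectively bounded; this will let us restrict $x$ to a large compact ball $B$ without loss, because outside $B$ many rays see almost no mass.

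Next I would approximate $\mu$ weakly by discrete measures. Since $\mu$ is absolutely continuous on $\gamma_d^1$ with compact support, choose a sequence of finite hyperplane families $\mathcal F_N$, each in general position, with the normalized counting measure $\mu_N = \tfrac{1}{|\mathcal F_N|}\sum_{H \in \mathcal F_N}\delta_H$ converging weakly to $\mu$; general position can be arranged by an arbitrarily small perturbation. Apply Theorem \ref{dualcpt} to $\mathcal F_N$ to get a point $x_N$ with the property that every ray from $x_N$ meets at least $\lfloor (|\mathcal F_N| + d)/(d+1)\rfloor$ hyperplanes, i.e. $\mu_N(I(r,d-1)) \ge \lfloor(|\mathcal F_N|+d)/(d+1)\rfloor / |\mathcal F_N| \to \tfrac{1}{d+1}$ uniformly in the ray. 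By the compactness reduction above we may assume $x_N \in B$, so after passing to a subsequence $x_N \to x_\ast$.

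The hard part will be the limit: showing $\mu(I(r,d-1)) \ge \tfrac{1}{d+1}$ for every ray $r$ from $x_\ast$, because $I(r,d-1)$ is not an open set and the "ray hits hyperplane" incidence is only a closed condition, so weak convergence only gives semicontinuity in the wrong direction a priori. The fix is to note that the \emph{complement} of $I(r,d-1)$ — hyperplanes disjoint from the closed ray $r$ — is, for a fixed closed ray, a set whose boundary (hyperplanes tangent to or passing through the endpoint, or parallel-limiting) has $\mu$-measure zero by absolute continuity; hence $I(r,d-1)$ is a $\mu$-continuity set for each fixed ray. One then argues: given a ray $r^\ast$ from $x_\ast$ and $\epsilon > 0$, slightly shrink it to a ray-like compact segment $S$ with $\mu(I(S,d-1)) \ge \mu(I(r^\ast,d-1)) - \epsilon$; for $N$ large the rays $r_N$ from $x_N$ in the same direction contain $S$ (since $x_N \to x_\ast$ and we truncate inside $B$), so $\mu_N(I(S,d-1)) \le \mu_N(I(r_N,d-1))$; pass to the limit using that $I(S,d-1)$ is open-ish enough — more precisely, use the portmanteau inequality $\liminf_N \mu_N(U) \ge \mu(U)$ applied to a suitable open neighborhood, combined with the $\mu$-continuity-set property — to conclude $\mu(I(r^\ast,d-1)) - \epsilon \le \liminf_N \mu_N(I(r_N,d-1)) \le \liminf_N \tfrac{1}{d+1} + o(1) \le \tfrac{1}{d+1}$ from the wrong side, so I must be careful with the direction of inequalities. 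The clean route is instead: bound $\mu_N(I(S,d-1))$ from \emph{below} by what Theorem \ref{dualcpt} guarantees, and bound $\mu(I(r^\ast,d-1))$ from \emph{above} is not what we want — rather we want a lower bound on $\mu$. So the correct manipulation is $\mu(I(\mathrm{int}\,r^\ast, d-1)) \ge \limsup_N \mu_N(\text{something} \subseteq I(\mathrm{int}\,r^\ast))$; one picks for each direction an open truncated cone/ray-neighborhood $U$ whose hyperplane-incidence set lies inside $I(r^\ast,d-1)$, checks $\mu_N(I(U,d-1)) \to \mu(I(U,d-1))$ via $I(U,d-1)$ being open, and uses that the discrete bound forces $\mu_N(I(U,d-1)) \ge \tfrac{1}{d+1} - o(1)$ because $U$ eventually contains $r_N$. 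Uniformity of the discrete bound over all rays is what makes this work for every direction at once, and a final compactness argument over $u \in S^{d-1}$ upgrades the pointwise-in-direction statement to the stated uniform one. I expect the delicate measure-theoretic bookkeeping in this last paragraph — identifying the right open approximating sets and confirming the null-boundary property from absolute continuity of $\mu$ on $\gamma_d^1$ — to be the principal technical obstacle.
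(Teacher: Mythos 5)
Your overall route---approximate $\mu$ weakly by finite families in general position, apply Theorem~\ref{dualcpt} to each, and pass to the limit---is genuinely different from the paper's proof (which pushes $\mu$ forward by the projections $\pi_x$ onto hyperplanes, shows that $\cent\bigl({\pi_x}_*\mu\bigr)$ depends continuously on $x$, and applies the fixed point Theorem~\ref{fpbundles2}, checking the boundary condition via Lemma~\ref{layermes}), and it can in principle be completed; but as written the decisive limit step is broken. Your ``clean route'' requires a nonempty open set $U\subseteq\mathbb R^d$ around a truncated piece of $r^\ast$ with $I(U,d-1)\subseteq I(r^\ast,d-1)$, and no such $U$ exists: for any $q\in U\setminus r^\ast$, strictly separate $q$ from the closed convex set $r^\ast$ and translate the separating hyperplane so that it passes through $q$; this hyperplane meets $U$ but not $r^\ast$. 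Moreover, openness of an incidence set gives the portmanteau inequality in the useless direction: weak convergence yields $\liminf_N\mu_N(V)\ge\mu(V)$ for open $V$, which does not transfer your lower bound from $\mu_N$ to $\mu$; a lower bound transfers only through closed sets, via $\mu(F)\ge\limsup_N\mu_N(F)$, or through genuine $\mu$-continuity sets. A second unproved point is the boundedness of the points $x_N$: it does not follow from ``rays outside $B$ see almost no mass''; what you need is a uniform analogue of Lemma~\ref{layermes} on the space of normals, namely that for some small $\delta>0$, uniformly in the unit direction $u$ and in all large $N$, the $\mu_N$-fraction of hyperplanes whose unit normal $u_H$ satisfies $|(u_H,u)|\le\delta$ is strictly below $\frac{1}{d+1}$. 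Only with this does the observation that a ray issued from a far point $x_N$ in the direction $x_N/|x_N|$ meets only such nearly parallel hyperplanes contradict Theorem~\ref{dualcpt} and confine the $x_N$ to a fixed ball. The bound is true (the projection of $\mu$ to $G_d^1$ is absolutely continuous, the sphere of directions is compact, and weak convergence on a finite net of directions suffices), but it is a real step you must carry out.

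The correct way to finish, consistent with your own (sound) observation that the problematic boundary hyperplanes form a $\mu$-null set, is through closed sets. Fix a direction $u$, let $r_N,r^\ast$ be the rays from $x_N,x_\ast$ in direction $u$; once $|x_N-x_\ast|\le\epsilon$ you have $r_N\subseteq r^\ast+B_\epsilon$, hence $\mu_N\bigl(\cl I(r^\ast+B_\epsilon,d-1)\bigr)\ge\mu_N\bigl(I(r_N,d-1)\bigr)\ge\frac{1}{d+1}$ (note $\lfloor(n+d)/(d+1)\rfloor\ge n/(d+1)$, so there is no error term), and the closed-set portmanteau inequality gives $\mu\bigl(\cl I(r^\ast+B_\epsilon,d-1)\bigr)\ge\frac{1}{d+1}$. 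Letting $\epsilon\to0$, the decreasing intersection of these closed sets is contained in $I(r^\ast,d-1)$ together with the set of hyperplanes containing the direction $u$, which is $\mu$-null by absolute continuity; continuity of the finite measure from above then yields $\mu\bigl(I(r^\ast,d-1)\bigr)\ge\frac{1}{d+1}$. With these two repairs your argument becomes a correct proof that trades the paper's machinery (which for this theorem reduces to a Brouwer-type fixed point argument for a continuous selection $f(x)\in\cent\lambda_x$) for the discrete Theorem~\ref{dualcpt} plus weak-convergence bookkeeping.
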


\begin{defn}
A \emph{$k$-half-flat} in $\mathbb R^d$ is a nonempty proper subset of a $k$-flat $L$, given by some linear inequality $l(x)\ge 0$. The $k-1$-flat $\{x\in L : l(x) = 0\}$ is called \emph{the boundary} of this half-flat.
\end{defn}

Let us state the dual central transversal theorem.

\begin{thm}
\label{dualctr} Suppose $d-k$ absolutely continuous probabilistic measures $\mu_1,\ldots,\mu_{d-k}$ are given on $\gamma_d^{d-k}$, all the measures having compact supports. Then there exists a $d-k-1$-flat $L$ such that for any $d-k$-half-flat $M$, bounded by $L$, and any $i=1,\ldots,d-k$
$$
\mu_i (I(M, k))\ge \frac{1}{k+2}.
$$
\end{thm}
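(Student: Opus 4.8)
The plan is to reduce Theorem~\ref{dualctr} to a parametrized form of the dual central point theorem for a single measure (Theorem~\ref{dualcptmes}) by orthogonal projection, and then to eliminate the parameter with the fiberwise fixed point machinery of Section~\ref{fixedpoints}. Write $j=d-k$, so we are given $j$ measures and must produce a $(j-1)$-flat $L$. For a $(k+1)$-subspace $B\in G_d^{k+1}$ let $\pi_B\colon\mathbb R^d\to B$ be the orthogonal projection; for $\mu_i$-almost every $k$-flat $F\subset\mathbb R^d$ one has $\vec F\cap B^\perp=0$, so $\pi_B(F)$ is a $k$-flat in $B\cong\mathbb R^{k+1}$, i.e.\ a hyperplane. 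Set $\rho_i^B:=(\pi_B)_*\mu_i$, an absolutely continuous, compactly supported measure on $\gamma_{k+1}^1$.

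First I would prove the reduction: if some $y\in B$ is a common \emph{dual central point} of $\rho_1^B,\dots,\rho_j^B$, meaning every ray $r$ from $y$ inside $B$ satisfies $\rho_i^B(I(r,k))\ge\frac{1}{k+2}$ for all $i$, then $L:=y+B^\perp$ works. A $(d-k)$-half-flat $M$ bounded by $L$ lies in the $j$-flat $N=y+W$ with $W=B^\perp\oplus\mathbb Rv$ for a unit vector $v\in B$, and $M=\{x\in N:\langle x-y,v\rangle\ge0\}$, whose boundary is $y+B^\perp=L$. For a $k$-flat $F$ in general position $F\cap N$ is a single point, $\pi_B(F\cap N)=\pi_B(F)\cap(y+\mathbb Rv)$, and $F$ meets $M$ precisely when this point lies on the ray $r$ from $y$ in direction $v$; hence $\mu_i(I(M,k))=\rho_i^B(I(r,k))\ge\frac{1}{k+2}$. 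When $j=1$ we have $G_d^{k+1}=\{\mathbb R^d\}$, $\pi_B=\operatorname{id}$, and this is exactly Theorem~\ref{dualcptmes}; for general $j$ the constant $\frac{1}{k+2}$ is just the single-measure constant $\frac{1}{(k+1)+1}$ in the ambient space $B$, so nothing is lost.

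For the parametrized step, let $D_i(B)\subset B$ be the set of dual central points of $\rho_i^B$; by the results of Section~\ref{centralpoints} applied in $\mathbb R^{k+1}$ it is nonempty, compact, and convex, and $B\mapsto D_i(B)$ is suitably semicontinuous (weak continuity of $B\mapsto\rho_i^B$, compact supports). The $D_i$ are thus fiberwise families of convex bodies in the tautological bundle $\gamma:=\gamma_d^{k+1}\to G_d^{k+1}$, and it suffices to find $B$ with $\bigcap_{i=1}^j D_i(B)\ne\emptyset$. Assuming the contrary, nearest-point projections onto the $D_i(B)$ within each fiber assemble into a family of fiberwise maps of $\gamma$, equivalently a nowhere-zero section of $\gamma^{\oplus(j-1)}$ --- a bundle whose rank $(j-1)(k+1)$ equals $\dim G_d^{k+1}$ --- and by the fiberwise Brouwer-type fixed point theorem of Section~\ref{fixedpoints} this is impossible once the top-dimensional mod~$2$ Euler class $w_{k+1}(\gamma)^{j-1}\in H^{(j-1)(k+1)}(G_d^{k+1};\mathbb Z/2)$ is nonzero.

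The main obstacle is thus twofold. The technical part is making the reduction robust under the almost-everywhere general-position caveats and the semicontinuity of $B\mapsto D_i(B)$, and matching the exact hypotheses of the fixed point theorem --- in particular that disjointness of the $D_i(B)$ really produces a nowhere-zero section of $\gamma^{\oplus(j-1)}$ itself, and that one works over $\mathbb Z/2$ since $\gamma_d^{k+1}$ need not be orientable. The substantive part is the characteristic-class computation: that $w_{k+1}(\gamma_d^{k+1})^{d-k-1}$ is nonzero, i.e.\ generates the top cohomology of $G_d^{k+1}$ with $\mathbb Z/2$ coefficients, which is a standard fact about Grassmannians and is the point at which the constant $\frac{1}{k+2}$ is finally certified via the single-measure theorem in dimension $k+1$.
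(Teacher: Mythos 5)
Your reduction is sound and matches the paper's: project each $\mu_i$ to a $(k+1)$-subspace $B$ to get hyperplane measures $\rho_i^B$ on $\gamma_{k+1}^1$, find a common dual central point $y\in B$, and take $L=y+B^\perp$; the constant $\frac1{k+2}$ and the class $w_{k+1}(\gamma_d^{k+1})^{d-k-1}\neq 0$ (Theorem~\ref{fpgrass}) are the right ingredients. The gap is in the parametrized step. You treat $D_i(B)$, the set of dual central points of $\rho_i^B$, as a nonempty compact \emph{convex} body varying continuously with $B$, ``by the results of Section~\ref{centralpoints}''. But those results (Lemma~\ref{cptsetstruct}, Lemmas~\ref{cptcont1} and~\ref{cptcont2}) concern the central-point set $\cent\lambda$ of a \emph{fixed} measure on $\mathbb R^{k+1}$. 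A dual central point is a fixed point: $D_i(B)=\{x\in B:\ x\in\cent\lambda_{i,B,x}\}$, where the measure $\lambda_{i,B,x}$ itself moves with the basepoint $x$ (it is the pushforward under $\pi_x$). Nothing in the paper, and no evident argument, makes this fixed-point set convex or Hausdorff-continuous in $B$; even its nonemptiness already requires running the fixed-point argument of Theorem~\ref{dualcptmes} in each fiber. Consequently your next move --- that emptiness of $\bigcap_i D_i(B)$ for every $B$ ``assembles'' nearest-point projections into a nowhere-zero section of $\gamma^{\oplus(d-k-1)}$ --- is not available as stated: the natural construction (minimize $\sum_i\dist(\cdot,D_i(B))^2$ in the fiber and take the vectors to the nearest points, which sum to zero and cannot all vanish) needs exactly the convexity and continuity you have not secured, and the word ``equivalently'' hides this.

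The paper sidesteps the issue by never forming the sets $D_i(B)$: it keeps the basepoint as part of the variable and applies the fiberwise fixed point theorem (Theorem~\ref{fpbundles2}) over the total space of $\gamma_d^{k+1}$ to the $d-k$ maps $f_i(x)\in\cent\lambda_{i,L,x}$. These are honest continuous fiberwise maps because the sets $\cent\lambda_{i,L,x}$ \emph{are} convex (Lemma~\ref{cptsetstruct}) and vary continuously in $(L,x)$ (Lemmas~\ref{cptcont1}, \ref{cptcont2}, after the reduction to connected measures as in the proof of Theorem~\ref{dualcptmes}); the coercivity hypothesis of Theorem~\ref{fpbundles2} for one of the maps is checked by the same layer estimate as in that proof. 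The common fixed point $x=f_1(x)=\dots=f_{d-k}(x)$ is then automatically a common dual central point in its fiber, so convexity of the dual-central-point sets is never needed. To repair your write-up, either switch to this fiberwise fixed-point formulation (the obstruction computation is then the relative Euler class over the pair of ball and sphere bundles, not a section of $\gamma^{\oplus(d-k-1)}$ over the base), or supply proofs of convexity and Hausdorff continuity of $B\mapsto D_i(B)$, which at present you do not have.
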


Theorem~\ref{dualcpt} implies the dual Tverberg theorem in the plane, such results were known before, see~\cite{roud1988} for example.

\begin{cor}
\label{dualtverberg2} Consider a family of $3n$ straight lines in general position in the plane. Then they can be partitioned into $n$ triples so that all the triangles, corresponding to the triples, have a common point.
\end{cor}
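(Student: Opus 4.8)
The plan is to invoke Theorem~\ref{dualcpt} with $d=2$ for the family $\mathcal F$ of $3n$ lines: since $\lfloor(3n+2)/3\rfloor=n$, it produces a point $x$ such that every ray issuing from $x$ meets at least $n$ of the lines. We may assume that $x$ lies on none of the lines (a generic choice; the degenerate case is disposed of by a small perturbation). I would then pass to the circle $S^1$ of directions at $x$: to each line $\ell\in\mathcal F$ assign the unit vector $n_\ell\in S^1$ pointing from $x$ towards $\ell$, so that the directions of the rays from $x$ that meet $\ell$ form exactly the open semicircle $A_\ell=\{u\in S^1:\langle u,n_\ell\rangle>0\}$ centered at $n_\ell$. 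In these terms, the conclusion of Theorem~\ref{dualcpt} says that every point of $S^1$ lies in at least $n$ of the semicircles $A_\ell$; equivalently, every open semicircle of $S^1$ contains at least $n$ of the points $n_\ell$. Note also that general position of $\mathcal F$ (no two lines parallel) forces the $3n$ points $n_\ell$ to be pairwise distinct and pairwise non-antipodal.

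The next step is the elementary observation that, for three lines $\ell_1,\ell_2,\ell_3\in\mathcal F$, the triangle they bound contains $x$ if and only if every ray from $x$ meets one of them, that is, $A_{\ell_1}\cup A_{\ell_2}\cup A_{\ell_3}=S^1$; and this happens precisely when the centers $n_{\ell_1},n_{\ell_2},n_{\ell_3}$ are not all contained in one closed semicircle, i.e., when, listed cyclically, the three arcs between consecutive centers all have length less than $\pi$. (The two halves of this chain are routine: for the first, any ray from a point inside a bounded triangle must leave it and hence cross a side, while outside the triangle each cell of the line arrangement is convex and unbounded and so contains an entire ray; the second is the standard duality between covering $S^1$ by halfcircles and enclosing the origin by the centers.) So it suffices to split $\mathcal F$ into $n$ triples whose center-triples each fail to lie in a closed semicircle.

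For this, list all $3n$ centers in cyclic order as $q_1,q_2,\dots,q_{3n}$ around $S^1$, and form the triples $X_j$ consisting of the lines with centers $q_j,\,q_{n+j},\,q_{2n+j}$ for $j=1,\dots,n$. Each $X_j$ does the job: otherwise its three centers lie in a closed semicircle, so, no two centers being antipodal, one of the three arcs they cut out has length strictly greater than $\pi$; but by construction each of those three arcs has in its interior exactly $n-1$ of the remaining $q_i$, and an arc of length $>\pi$ contains an open semicircle in its interior, which would then meet at most $n-1<n$ of the centers, contradicting the property of $x$ established above. Hence $X_1,\dots,X_n$ partition $\mathcal F$ into $n$ triples whose triangles all contain $x$.

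The main obstacle, I expect, is the combinatorial core of the last paragraph: hitting on the ``interleaved'' partition $\{q_j,q_{n+j},q_{2n+j}\}$ and arranging the count so that the arc of excess length is forced to hide an under-covered open semicircle. The conceptual input is the translation ``triangle contains $x$'' $\Longleftrightarrow$ ``three semicircles cover $S^1$'' $\Longleftrightarrow$ ``centers escape every closed semicircle''; once that is in place, the rest is bookkeeping, the only genuine technicality being the (easy) reduction to the case where $x$ avoids all the lines.
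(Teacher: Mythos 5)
Your argument is essentially the paper's: apply Theorem~\ref{dualcpt} with $d=2$ to obtain a point $x$ such that every ray from $x$ meets at least $n$ lines, order the lines cyclically by the direction from $x$ to their projections, and take the interleaved triples $\{q_j,q_{n+j},q_{2n+j}\}$; the semicircle-covering criterion and the arc-counting you spell out are exactly what the paper compresses into ``by the central point property of $x$''. The only divergence is the degenerate case where $x$ lies on one of the lines: the paper handles it by assigning such a line an arbitrary direction in the cyclic order, whereas your appeal to ``a generic choice / small perturbation'' is the one loose end, since $x$ is supplied by Theorem~\ref{dualcpt} rather than chosen, and the central-point property is a closed condition that a perturbation of $x$ need not preserve.
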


We are going to study the dual Tverberg theorem in arbitrary dimension. Let us make a definition.

\begin{defn}
We say that some $d+1$ hyperplanes $h_1,\ldots,h_{d+1}$ of general position in $\mathbb R^d$ \emph{form a simplex} $S$, if $S$ is the convex hull of the finite point set $\{x_i\}$, defined as
$$
x_i = \bigcap_{j\neq i} h_j.
$$
It is obvious that the facets of $S$ are subsets of the respective hyperplanes $h_i$.
\end{defn}

We conjecture the following statement.

\begin{con}[The dual Tverberg theorem]
Suppose $\mathcal F$ is a family of $(d+1)n$ hyperplanes in general position in $\mathbb R^d$. Then $\mathcal F$ can be partitioned into $n$ subfamilies of $d+1$ hyperplanes each so that the simplexes, formed by the subfamilies, have a common point.
\end{con}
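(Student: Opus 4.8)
The plan is to reduce the conjecture to a purely combinatorial partition problem about coverings of the sphere $S^{d-1}$ by hemispheres, exactly the mechanism that (in the plane) turns Theorem~\ref{dualcpt} into Corollary~\ref{dualtverberg2}. Since $|\mathcal F|=(d+1)n$, Theorem~\ref{dualcpt} supplies a point $x$ such that every ray from $x$ meets at least $\left\lfloor\frac{(d+1)n+d}{d+1}\right\rfloor=n$ hyperplanes of $\mathcal F$. After a generic perturbation of $\mathcal F$ (there are only finitely many partitions to consider, and ``the simplices of a fixed partition have a common point'' is a closed condition, preserved in the limit of compact convex sets) we may assume $x$ lies on no hyperplane of $\mathcal F$. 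To each $h\in\mathcal F$ attach the open hemisphere
$$
U_h=\{u\in S^{d-1}:\ \text{the ray } x+\mathbb R_{\ge 0}u \text{ meets } h\},
$$
which is a genuine open hemisphere precisely because $x\notin h$. Then a ray from $x$ in direction $u$ meets $h$ iff $u\in U_h$, so the conclusion of Theorem~\ref{dualcpt} reads: every point of $S^{d-1}$ lies in at least $n$ of the $(d+1)n$ hemispheres $\{U_h\}_{h\in\mathcal F}$.

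Next I would record the geometric meaning of ``$x$ lies in the simplex of a subfamily.'' Any $d+1$ hyperplanes of general position bound a $d$-simplex $S$ which is an intersection of $d+1$ halfspaces whose outer normals positively span $\mathbb R^d$; tracking the signs $\operatorname{sign}(\beta_j-\langle x,a_j\rangle)$ of the defining functionals at $x$, one checks that the hemispheres $U_{h_j}$ of these $d+1$ hyperplanes cover $S^{d-1}$ if and only if the sign-adjusted normals positively span $\mathbb R^d$, which happens if and only if $x\in\int S$ (the ``all signs negative'' alternative is vacuous, as the opposite cone would meet $S$ only at the empty common intersection of the $d+1$ hyperplanes). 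Hence $x$ lies in the simplex formed by $\{h_{i_0},\dots,h_{i_d}\}$ exactly when $U_{h_{i_0}}\cup\dots\cup U_{h_{i_d}}=S^{d-1}$, and the conjecture follows from the \emph{hemisphere partition lemma}: if $(d+1)n$ open hemispheres of $S^{d-1}$ cover every point at least $n$ times, their index set splits into $n$ blocks of $d+1$ indices, the hemispheres of each block covering $S^{d-1}$. Dually, in terms of the hemisphere centers $c_h\in S^{d-1}$: if $(d+1)n$ unit vectors are such that every open halfspace with $0$ on its boundary contains at least $n$ of them, then they partition into $n$ positive bases of $\mathbb R^d$. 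For $d=2$ the hemispheres are half-circles, and a rotating-sweep/greedy argument proves the lemma — recovering the already-known Corollary~\ref{dualtverberg2}.

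The essential obstacle, and the reason this is only a conjecture, is the hemisphere partition lemma for $d\ge 3$. The linear sweep that works on $S^1$ has no analogue on $S^{d-1}$, and the naive induction on $n$ fails: a covering family of $d+1$ hemispheres covers generic points many times over, so deleting it can drop the covering multiplicity by far more than $1$, destroying the hypothesis for the remaining $(d+1)(n-1)$ hemispheres. This places the lemma in the orbit of Reay-type ``partition into positive bases'' problems, and I expect that settling it requires either a genuinely new extremal/LP argument on the configuration of centers, or a direct equivariant-topology scheme for the space of partitions of $\mathcal F$ into blocks of size $d+1$ — the trouble with the latter being that, unlike the primal Tverberg configuration space, this space together with the natural obstruction does not assemble into a sphere-valued equivariant problem whose Euler class one can compute. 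Consequently I would first test both mechanisms on the smallest open instance $d=3$, $n=2$ (eight hemispheres double-covering $S^2$, to be split into two coverings of size $4$), to see which of the two forces the desired partition before attempting the general case.
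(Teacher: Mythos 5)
You are addressing a statement that the paper itself records as a conjecture, and your text is not a proof of it: everything is reduced to the ``hemisphere partition lemma'' (every $n$-fold cover of $S^{d-1}$ by $(d+1)n$ open hemispheres splits into $n$ covers of size $d+1$), and you concede that this lemma is open for $d\ge 3$. That lemma carries the entire difficulty, and it is in fact a priori \emph{stronger} than the conjecture: your scheme forces all $n$ simplices to contain one point $x$ of arrangement depth $\ge n$ fixed in advance by Theorem~\ref{dualcpt}, whereas the conjecture lets the common point depend on the partition and lie anywhere. So even if the conjecture is true, the lemma could fail and the reduction be a dead end. For comparison, the paper proves only the prime-power case (Theorem~\ref{dualtverbergp}), by a different mechanism: a $G$-equivariant section of a bundle over $B\times(\Delta^{N-1})^{*n}_\Delta$ whose equivariant Euler class is shown to be nonzero, so that the common interior point $b$ is produced simultaneously with the partition rather than prescribed beforehand; no covering-decomposition statement about a fixed deep point enters that argument.

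There is also a local gap inside the reduction. You ``may assume $x$ lies on no hyperplane of $\mathcal F$'' after perturbing $\mathcal F$, but perturbing the family gives no control over where the central point of the perturbed family sits: Theorem~\ref{dualcpt} only asserts existence of some $x$, and for the perturbed family that point may again lie on one of its hyperplanes. Note that a point of $h$ meets $h$ along every ray, so depth can genuinely drop when you step off $h$, and it is not automatic that the depth-$\ge n$ region contains a point outside $\bigcup\mathcal F$. To make this step honest you must either prove that a central point can be chosen off all hyperplanes, or extend your dictionary to allow $U_h=S^{d-1}$, where the equivalence ``the block's hemispheres cover $S^{d-1}$ iff $x$ lies in the block's simplex'' breaks down. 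The pieces that do work are the depth count $\left\lfloor\frac{(d+1)n+d}{d+1}\right\rfloor=n$, the identification of $U_h$ with an open hemisphere when $x\notin h$, the equivalence with $x\in\int S$ for a block in general position, the Hausdorff-limit argument for passing back from a perturbed family, and the $d=2$ case, which indeed recovers Corollary~\ref{dualtverberg2}; but as it stands the proposal establishes nothing beyond the plane.
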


In this paper we are going to prove this conjecture for prime powers $n$. This is an essential constraint of the topological method, as it is in the proof of the topological Tverberg theorem in~\cite{vol1996}, for example.

\begin{thm}
\label{dualtverbergp} Suppose $\mathcal F$ is a family of $(d+1)n$ hyperplanes in general position in $\mathbb R^d$, $n$ being a prime power.

Then $\mathcal F$ can be partitioned into $n$ subfamilies of $d+1$ hyperplanes each, so that the simplexes, formed by the subfamilies, have a common interior point.
\end{thm}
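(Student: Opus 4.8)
The plan is to recast Theorem~\ref{dualtverbergp} as a configuration‑space/test‑map problem, in the spirit of the topological Tverberg theorem, and to dispose of the resulting obstruction with the fiberwise Euler class machinery of Section~\ref{fixedpoints}. The prime power hypothesis will enter exactly where it does in~\cite{vol1996}, namely through Volovikov's lemma for $(\mathbb Z/p)^k$-spaces; and the obstruction‑theoretic (degree) nature of the argument is what will give the stronger ``common \emph{interior} point'' conclusion rather than merely a common point.

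First I would set up the geometric reformulation. One records that $d+1$ hyperplanes in general position always form a simplex, and that for $h_i=\{y:\langle\nu_i,y\rangle=c_i\}$ a point $x$ off all the hyperplanes lies in the interior of that simplex if and only if the normals $\nu_i$, oriented away from $x$, namely $n_i(x)=\operatorname{sgn}(c_i-\langle\nu_i,x\rangle)\,\nu_i$, positively span $\mathbb R^d$, i.e. $0\in\int\conv\{n_1(x),\dots,n_{d+1}(x)\}$. Thus the theorem is equivalent to the existence of a point $x$ and a partition of $\mathcal F$ into $n$ blocks of $d+1$ hyperplanes such that, for each block, the $x$-oriented normals of its members positively span $\mathbb R^d$. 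Homogenizing via $\tilde\nu_i=(\nu_i,-c_i)$ and $\tilde x=(x,1)$, this becomes $n$ linear‑dependence conditions $\sum_{i\in\mathrm{block}}\mu_i\nu_i=0$ with all $\mu_i\neq0$, together with one global feasibility (separation) condition $0\notin\conv\{\mu_i\tilde\nu_i:i\in\mathcal F\}$ that encodes the existence of a single consistent $x$; one checks that a choice of $\mu$ satisfying both does produce an $x$ lying in the interior of every block's simplex, since $\sum_i\mu_i\nu_i=0$ then reads $\sum_i|\mu_i|\,n_i(x)=0$.

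Next I would introduce the space $X$ of ordered weighted balanced partitions of the $(d+1)n$ hyperplanes into $n$ blocks of size $d+1$ — a deleted‑join/chessboard‑type complex of the kind used for balanced Tverberg statements — carrying the action of $G$ permuting the blocks, where $G=\mathbb Z/p$ for $n=p$ and $G=(\mathbb Z/p)^k$, identified with the block index set, for $n=p^k$; and I would verify that $X$ is highly connected, its connectivity matching the dimension of the relevant representation sphere just as for the topological Tverberg theorem. The test map $\Phi\colon X\to V$, with $V$ assembled from copies of the reduced regular representation $W_n$ of $G$, sends a weighted partition to the tuple of homogenized weighted linear‑dependence residuals of its $n$ blocks; it is $G$-equivariant, and at a zero of $\Phi$ the general‑position hypothesis forces each block's $d+1$ normals to realize their (necessarily full‑support) linear dependence, after which the feasibility condition above supplies the common interior point $x$. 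Since a hyperplane determines only a normal \emph{line}, the orientation data $n_i(x)$ is governed by the line bundle $\gamma_d^1$ over $G_d^1$ from the definitions above, so $\Phi$ is most naturally a section of a $G$-vector bundle over $X$ rather than a map into a fixed vector space; here I would invoke the fiberwise Brouwer/Euler class theorem of Section~\ref{fixedpoints} to conclude that $\Phi$ must vanish provided the corresponding equivariant Euler class is nonzero.

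Finally, the nonvanishing of that Euler class is a dimension count identical to the one for the topological Tverberg theorem, so Volovikov's lemma in the prime power case (as in~\cite{vol1996}) rules out a $G$-section avoiding the zero section, forcing a zero of $\Phi$ and hence the desired partition together with its common interior point. I expect the genuine difficulties to be: (i) constructing a configuration space for \emph{balanced} partitions that is simultaneously highly connected and carries the $G$-action with the freeness hypotheses Volovikov's lemma requires — this is the feature that separates the problem from the unbalanced topological Tverberg; (ii) controlling the degenerate strata of $\Phi$ (blocks whose normals degenerate, weights running to the boundary of the configuration space) so that a zero really yields \emph{strict} inequalities, i.e. a genuine interior point rather than a boundary coincidence, and so that the feasibility condition is met; and (iii) pushing the $\gamma_d^1$-bundle bookkeeping through the obstruction computation so that the fiberwise Euler class theorem and Volovikov's lemma combine without loss.
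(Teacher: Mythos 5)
Your scheme has a structural gap at its center: the test map you describe does not encode the existence of the common point. At a zero of your $\Phi$, each block of $d+1$ hyperplanes acquires a full-support linear dependence $\sum_{i\in\mathrm{block}}\mu_i\nu_i=0$ among its (suitably oriented) normals — but by general position \emph{every} block of $d+1$ hyperplanes admits exactly such a dependence, unique up to scale, with no topology needed; once the orientation ambiguity is absorbed into a twisted bundle as you propose, the section vanishes on every balanced partition, so the Euler-class/Volovikov computation produces zeros that carry no information. The entire content of the theorem sits in your ``global feasibility'' condition $0\notin\conv\{\mu_i\tilde\nu_i\}$ (the simultaneous realizability of the $n$ sign patterns by a single $x$), and that is an open condition, not the vanishing of any coordinate of $\Phi$: nothing in your obstruction argument forces it, and your item (ii) treats as stratum bookkeeping what is in fact the missing main mechanism. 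The paper's proof puts the point itself into the configuration space: it takes the convex body $B$ stable under the orthogonal projections $f_i$ onto the hyperplanes (Lemma~\ref{projbounded}), lets $b\in B$ vary, and builds from the vertex map $v_j\mapsto f_j(b)-b$ a $G$-equivariant section $s$ over $B\times(\Delta^{N-1})^{*n}_\Delta$ with values in $J_A^n(\mathbb R^d)$; a zero of $s$ says precisely that $b$ lies in $\conv\{f_j(b)\}$ for every block simultaneously, the factor $B$ contributing a relative Euler class in $H^d(B,\partial B)$ exactly as in Theorem~\ref{fpbundles2}, so the Künneth product with $e(D_A^n(\mathbb R^d))$ is nonzero and zeros exist. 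The ``interior point'' conclusion is then obtained by a secondary obstruction you have no counterpart for: assuming every zero has $b$ lying on some hyperplane, the zero set maps $G$-equivariantly into the $(d-1)$-skeleton of the deleted join, so an auxiliary class $e'\in\Lambda_p^d(k)$ (Lemma~\ref{cohomologymult}) dies there (Lemma~\ref{cohomologydim}), and Lemma~\ref{hom-prod} contradicts $e(D_A^n(\mathbb R^d))e'\neq0$; hence some zero has $b$ off all hyperplanes, and Lemma~\ref{hyperplane-gp} upgrades membership in each convex hull to \emph{interior} membership.

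Your difficulty (i) is also a genuine, unresolved gap rather than a technicality: you need a complex of \emph{balanced} partitions (blocks of size at most $d+1$) that is both sufficiently connected and carries the free $(\mathbb Z/p)^k$-action, and you neither construct it nor establish its connectivity — this is exactly the hard point in balanced or constrained Tverberg-type statements and does not follow from the unbalanced case. The paper never needs such a complex: it works with the ordinary deleted join $(\Delta^{N-1})^{*n}_\Delta\cong[n]^{*N}$, whose $(N-2)$-connectedness is standard, and balancedness is not imposed topologically but extracted at the very end from geometry: Lemma~\ref{hyperplane-gp} forces each block at the good zero to involve at least $d+1$ hyperplanes, and since the $n$ blocks are disjoint subsets of $[n(d+1)]$ they have exactly $d+1$ each. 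So while your oriented-normal reformulation of ``interior of the simplex formed by a block'' is correct, the route you build on it would need a new configuration space theorem and, more seriously, a way to make the separation condition part of the topological scheme; as it stands, the proposal does not prove the theorem.
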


There is also a dual version of the colorful Tverberg theorem from~\cite{zivvre1994}.

\begin{thm}
\label{coldualtverberg} Consider a family $\mathcal F$ of $(d+1)t$ hyperplanes in general position in $\mathbb R^d$. Let $t\ge 2r-1$, $r=p^k$, $p$ be a prime. Suppose $\mathcal F$ is colored into $d+1$ colors, each color having exactly $t$ elements.

Then we can choose $r$ disjoint subfamilies $\mathcal F_1,\ldots,\mathcal F_r\subset \mathcal F$ of $d+1$ hyperplanes each so that all the simplexes, formed by the subfamilies, have a common interior point, and every subfamily $\mathcal F_i$ is colored into all the $d+1$ colors.
\end{thm}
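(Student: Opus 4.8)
The plan is to mirror the proof of Theorem~\ref{dualtverbergp}, which itself runs the topological scheme behind the colourful Tverberg theorem of~\cite{zivvre1994}: one sets up an equivariant test map on a suitable configuration space and shows, by an Euler class computation, that it must vanish. The only change needed here is to replace the configuration space used for Theorem~\ref{dualtverbergp} by a ``coloured'' one, built from chessboard complexes --- one per colour. The hypothesis $t\ge 2r-1$ enters at a single point: it is the classical threshold at which these chessboard complexes become highly connected, and that connectivity, together with freeness of the group action, is all that the vanishing argument uses.

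In detail, put $G=(\mathbb Z/p)^k$, so $|G|=r$, and identify the index set of the $r$ sought subfamilies with $G$, letting $G$ act on it freely by translations. For each colour $i=1,\dots,d+1$ let $\Delta_i$ be the chessboard complex whose vertices are the pairs $(h,g)$, $h$ a hyperplane of colour $i$ and $g\in G$, a set of such pairs spanning a simplex exactly when no two share a hyperplane and no two share an index. Set $K=\Delta_1*\dots*\Delta_{d+1}$ with the diagonal $G$-action, which is free --- each $\Delta_i$ carries a free action, since by injectivity a nontrivial index translation cannot preserve a matching, and every point of a join has positive weight on some factor. A point of $K$ assigns, to each index $g\in G$, a weighted rainbow subfamily --- one weighted choice of a colour-$i$ hyperplane for each $i$ --- the subfamilies at different indices being disjoint colour by colour; a point in the relative interior of a maximal simplex of $K$ describes $r$ honest, pairwise disjoint, rainbow subfamilies $\mathcal F_1,\dots,\mathcal F_r$, since (as $t\ge r$) every inclusion-maximal matching in each $\Delta_i$ saturates all $r$ indices.

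One uses the test map from the proof of Theorem~\ref{dualtverbergp}, which on $K$ reads as follows. Since $\mathcal F$ is in general position, for a rainbow subfamily $\mathcal F_g=\{h_{g,1},\dots,h_{g,d+1}\}$ each $d$-fold intersection $v_{g,i}=\bigcap_{l\ne i}h_{g,l}$ is a single point of $\mathbb R^d$, and the $d+1$ points $v_{g,i}$ are the affinely independent vertices of the simplex $S_g$; so ``the simplices $S_g$ have a common interior point'' becomes the assertion that the convex hulls of the vertex sets $\{v_{g,1},\dots,v_{g,d+1}\}$, $g\in G$, share a relative interior point. Using the weights $w_{g,i}$ carried by a point of $K$ as coefficients, one forms for each $g$ the vector $\sum_{i=1}^{d+1}w_{g,i}(v_{g,i},1)\in\mathbb R^{d+1}$ and then the differences of these $r$ vectors; this is a $G$-equivariant map $f\colon K\to W_r\otimes\mathbb R^{d+1}$, with $W_r$ the reduced real regular representation of $G$, extended over the lower cells of $K$ by the same convention as in Theorem~\ref{dualtverbergp}. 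A zero of $f$ in the relative interior of a maximal simplex of $K$ yields $r$ disjoint rainbow subfamilies whose simplices share a common point which, by affine independence of the $v_{g,i}$, lies in the interior of every $S_g$.

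It remains to produce such a zero. By the Bj\"orner--Lov\'asz--Vre\'cica--\v Zivaljevi\'c estimate, $t\ge 2r-1$ makes each $\Delta_i$ $(r-2)$-connected, so by the join-connectivity formula $K$ is $\bigl((d+1)r-2\bigr)$-connected. Since $G$ acts freely on $K$, the classifying map $K/G\to BG$ induces an isomorphism on cohomology in degrees $\le(d+1)r-2$, and $f$ is a section of the associated bundle $K\times_G(W_r\otimes\mathbb R^{d+1})\to K/G$; because $G=(\mathbb Z/p)^k$ is elementary abelian and $W_r\otimes\mathbb R^{d+1}$ has no trivial summand, the Euler class of that bundle, sitting in degree $(d+1)(r-1)\le(d+1)r-2$, is pulled back from the nonzero Euler class over $BG$ --- the computation behind Theorem~\ref{dualtverbergp} and~\cite{vol1996} --- so $f$ vanishes somewhere. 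For generic $\mathcal F$ the zero set of $f$ is a $d$-manifold, while its intersection with the $\bigl((d+1)r-2\bigr)$-skeleton of $K$ has dimension at most $d-1$; hence $f$ has a zero in the relative interior of a maximal simplex of $K$ --- the required partition with a common interior point. The genuinely delicate point, as I see it, is to confirm that this argument is sensitive to the configuration space only through its connectivity in the above range together with the free $p$-group action, so that exchanging a deleted join of spheres for the join $K$ of chessboard complexes costs nothing; the remaining items --- continuity of $f$ across the lower skeleton of $K$, and the fact that the general-position perturbation respects the colouring (which it does, the colouring being encoded in $K$ and the perturbation only moving the hyperplanes of $\mathcal F$) --- are routine.
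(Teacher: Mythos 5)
Your configuration space is fine and is in fact the same one the paper uses: the $r$-fold deleted join $K(d+1,t)^{*r}_\Delta$ is canonically the join of $d+1$ chessboard complexes, with the same connectivity bound coming from $t\ge 2r-1$. The gap is in the test map. You send a point of $K$ to $\sum_i w_{g,i}(v_{g,i},1)$, where $v_{g,i}=\bigcap_{l\ne i}h_{g,l}$ is a vertex of the simplex formed by the rainbow subfamily at index $g$. But $v_{g,i}$ only exists when \emph{all} $d+1$ hyperplanes at index $g$ are specified, i.e.\ on the top-dimensional cells of $K$; on a lower face where some colour is missing at index $g$ the formula is meaningless, and there is no ``convention from Theorem~\ref{dualtverbergp}'' to fall back on, because the paper's map is of a different kind: it is a join/linear extension of a \emph{vertex} map $v_j\mapsto f_j(b)-b$ (projection of a variable point $b$ onto the hyperplane $h_j$), so each configuration-space vertex has an image depending only on its own hyperplane, and the map is automatically defined and continuous on the whole complex. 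That construction also forces the extra factor $B$ (the projection-stable body of Lemma~\ref{projbounded}) and a fixed-point-type component of the section, none of which appears in your map; your map is not a cosmetic variant of the paper's but an essentially primal colored-Tverberg map applied to points that are not globally defined.

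Even granting a globally defined section, your last step --- that ``for generic $\mathcal F$'' the zero set is a $d$-manifold meeting the codimension-one skeleton in dimension $\le d-1$, so some zero lies in the relative interior of a top cell --- is not justified: genericity of the arrangement does not let you perturb this particular constrained section into general position with respect to the skeleton, and a zero on a lower cell gives you nothing (indeed your map is undefined there). This is exactly the delicate point of the paper's argument, and it is handled cohomologically, not by transversality: the paper multiplies the equivariant Euler class by an auxiliary class $e'\in\Lambda_p^d(k)$ (Lemma~\ref{cohomologymult}), and if every zero were degenerate (the set $J_b$ of hyperplanes through $b$ nonempty), it maps the zero set equivariantly into the $(d-1)$-skeleton and uses Lemmas~\ref{cohomologydim} and~\ref{hom-prod} to contradict $e(D_A^n(\mathbb R^d))e'\ne 0$; the nondegenerate zero then yields interior common points via the general-position Lemma~\ref{hyperplane-gp}. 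To repair your proof you would either have to reproduce that mechanism (variable point $b\in B$, projections as the vertex map, the extra degree-$d$ class to kill degenerate zeros) or supply a genuinely new argument for extending your map to all of $K$ and for forcing a zero with all weights positive; as written, both of these are asserted rather than proved.
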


\section{Some properties of measures}
\label{measures}

Be the Radon--Nikodym theorem (see~\cite{shg1977}) absolutely continuous measures on $\mathbb R^d$ are given by nonnegative functions of class $L_1$. We also consider measures on smooth manifolds, given by $L_1$ functions in any coordinate patch. 

In the sequel we consider absolutely continuous measures on a manifold $X$, mostly we consider \emph{probabilistic} measures, i.e. the measures $\mu$ such that $\mu(X)=1$.

In this Section we give some definitions and prove technical lemmas. Denote $\bd X$ and $\cl X$ the boundary and the closure of a set $X\subseteq\mathbb R^d$.

\begin{defn}
Let $P$ be a topological space. The family of measures $\mu_p$ on a manifold $X$ \emph{depends continuously} on $p\in P$, if for any open set $U\subseteq X$ the number $\mu_p(U)$ depends continuously on $p$.
\end{defn}

\begin{defn}
\emph{The support} of a measure $\mu$ on $X$ is the set
$$
\suppo \mu = \{x\in X : \text{for any neighborhood}\ U\ni x \
\mu(U)
> 0\}.
$$
The support of a measure is obviously closed.
\end{defn}

\begin{defn}
For any locally trivial bundle of manifolds $f : X\to Y$ a measure $\mu$ with compact support on $X$ defines a \emph{projection} of measure $f_*\mu$ on $Y$ by the formula
$$
f_*\mu (A) = \mu (f^{-1}(A)).
$$
\end{defn}

\begin{lem}
The measure $f_*\mu$ is locally given by an $L_1$-function. If $\mu_p$ depends continuously on $p$ then $f_*\mu_p$ depends continuously on $p$ too.
\end{lem}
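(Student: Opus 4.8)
The plan is to treat the two assertions separately; the statement about continuous dependence is almost immediate, while the $L_1$ assertion is a routine application of Fubini's theorem after trivializing the bundle.

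I would deal with continuous dependence first. A locally trivial bundle $f\colon X\to Y$ is in particular a continuous map, so the preimage $f^{-1}(A)$ of any open set $A\subseteq Y$ is open in $X$. By the definition of continuous dependence, $p\mapsto\mu_p(f^{-1}(A))$ is then continuous; since $f_*\mu_p(A)=\mu_p(f^{-1}(A))$ by the definition of the projection, this is exactly the assertion that $f_*\mu_p$ depends continuously on $p$. Neither compactness nor absolute continuity enters at this step.

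For the first assertion I would work locally on $Y$ and use local triviality to reduce to a product. Cover the compact set $\suppo\mu$ by finitely many open subsets of $X$, each of the form (a trivializing chart of $f$ over a coordinate patch $V\subseteq Y$) times (a coordinate patch $W$ of the fibre $F$); choose a partition of unity on $X$ subordinate to this cover together with $X\setminus\suppo\mu$, and write $\mu$ as a finite sum of absolutely continuous measures, each supported in a single product chart $V\times W$. Since the projection $f_*$ is additive and a finite sum of locally $L_1$ functions is again locally $L_1$, it suffices to handle one such summand. For a measure supported in $V\times W$ with nonnegative density $\rho\in L_1(V\times W)$, and a Borel set $A\subseteq V$, one has $f^{-1}(A)\cap(V\times W)=A\times W$, hence $f_*\mu(A)=\mu(A\times W)$; by Tonelli's theorem (legitimate since $\rho\ge 0$) this equals the integral over $A$ of the fibrewise integral $\bar\rho(y)$ of $\rho(y,\cdot)$ over $W$. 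Thus $f_*\mu$ is given on $V$ by the density $\bar\rho$, which Tonelli guarantees to be measurable with total integral over $V$ equal to $\mu(X)<\infty$; hence $\bar\rho\in L_1(V)$, and away from the compact set $f(\suppo\mu)$ the measure $f_*\mu$ simply vanishes.

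I do not expect any genuine obstacle here. The only points demanding some care are the measure-theoretic bookkeeping — using a partition of unity to reduce to a single product chart, and invoking Tonelli rather than the full Fubini theorem, so that the fibrewise integral $\bar\rho$ is automatically a measurable $L_1$ density — together with recalling that the finiteness of $\bar\rho$ rests on $\mu$ having compact support, and hence $\mu(X)<\infty$.
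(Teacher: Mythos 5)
Your proposal is correct and follows essentially the same route as the paper, which simply cites Fubini's theorem for the $L_1$ assertion and notes that the continuity assertion is immediate from the definition; your partition-of-unity reduction to a product chart and the use of Tonelli are just the expected fleshing-out of that one-line argument.
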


\begin{proof}
The first claim follows from Fubini's theorem (see~\cite{shg1977}), the second is obvious by definition.
\end{proof}

We also need a continuous dependence of a measure projection on the projection map.

\begin{defn}
Let us define the \emph{compact-open} topology on the set of maps $f :X\to Y$. Put the base neighborhoods to be
$$
U_{K,V} = \{f :X\to Y : f(K)\subseteq V\}
$$
for any compact $K\subseteq X$ and open $V\subseteq Y$.
\end{defn}

\begin{lem}
\label{mesprojcont0}
Let a family of maps $f_p : X \to Y$ between manifolds depend continuously on the parameter $p\in P$ in the compact-open topology. Let every $f_p$ be a locally trivial bundle.

Then for any $\mu$ with compact support on $X$ the measures ${f_p}_*\mu$ depend continuously on $p\in P$.
\end{lem}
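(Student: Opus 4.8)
\emph{Plan.} The plan is to reduce the lemma to the following assertion: for every $p_0\in P$ and every open $U\subseteq Y$, the function $p\mapsto{f_p}_*\mu(U)=\mu(f_p^{-1}(U))$ is continuous at $p_0$. Write $K=\suppo\mu$. For any open $V\supseteq f_{p_0}(K)$ one has $f_{p_0}\in U_{K,V}$, hence $f_p(K)\subseteq V$ for all $p$ in some neighborhood of $p_0$; fixing such a $V$ with $\cl V$ compact and calling it $Q$, we may assume that for $p$ near $p_0$ all the measures ${f_p}_*\mu$ are supported in $Q$ and have the constant total mass $\mu(X)<\infty$. It then suffices to prove that $p\mapsto\mu(f_p^{-1}(U))$ is simultaneously lower and upper semicontinuous at $p_0$; upper semicontinuity here is the same as lower semicontinuity of $p\mapsto\mu(f_p^{-1}(Y\setminus U))$ for the closed set $Y\setminus U$, because the two quantities add up to $\mu(X)$.

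\emph{Lower semicontinuity for an open set $U$} is where the compact--open topology does the work, and the bundle hypothesis is not needed. Given $\varepsilon>0$, pick, by inner regularity of $\mu$, a compact $C\subseteq f_{p_0}^{-1}(U)$ with $\mu(C)>\mu(f_{p_0}^{-1}(U))-\varepsilon$. Then $f_{p_0}(C)$ is a compact subset of the open set $U$, i.e. $f_{p_0}\in U_{C,U}$; continuity of $p\mapsto f_p$ gives a neighborhood $W\ni p_0$ on which $f_p\in U_{C,U}$, i.e. $C\subseteq f_p^{-1}(U)$. Hence $\mu(f_p^{-1}(U))\ge\mu(C)>\mu(f_{p_0}^{-1}(U))-\varepsilon$ for $p\in W$, so $\liminf_{p\to p_0}\mu(f_p^{-1}(U))\ge\mu(f_{p_0}^{-1}(U))$.

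\emph{The reverse inequality} --- lower semicontinuity of $p\mapsto\mu(f_p^{-1}(F))$ for closed $F$ --- is where the local triviality of the $f_p$ is essential, the mechanism being that it forces the family $\{{f_p}_*\mu\}$ to be \emph{uniformly} absolutely continuous near $p_0$: by the preceding lemma each ${f_p}_*\mu$ is absolutely continuous, and covering $f_{p_0}(K)$ by finitely many trivializing sets and applying Fubini bounds the density of ${f_p}_*\mu$ fiberwise in terms of the fixed density of $\mu$, so that ${f_p}_*\mu(A)$ is small, uniformly in $p$, once the Lebesgue measure of $A$ is small. On the other hand $f_p\to f_{p_0}$ uniformly on $K$ --- this is just the meaning of compact--open convergence restricted to the compact set $K$ --- which makes the measures ${f_p}_*\mu$ converge weakly to ${f_{p_0}}_*\mu$ against bounded continuous functions. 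Uniform absolute continuity together with weak convergence upgrades to \emph{setwise} convergence $\mu(f_p^{-1}(A))\to\mu(f_{p_0}^{-1}(A))$ for every Borel $A$ (by a standard weak‑compactness argument: any weak‑$L_1$ cluster point of the densities must be the density of ${f_{p_0}}_*\mu$, since continuous test functions already determine an $L_1$ density); in particular $\mu(f_p^{-1}(F))\to\mu(f_{p_0}^{-1}(F))$, which is what was needed. Combining this with the second paragraph gives $\mu(f_p^{-1}(U))\to\mu(f_{p_0}^{-1}(U))$ for every open $U$, and hence the lemma.

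\emph{Main obstacle.} The delicate step is exactly the uniform absolute continuity of the family $\{{f_p}_*\mu\}$: the trivializing sets of $f_p$ for $p\ne p_0$ need not coincide with those of $f_{p_0}$, and the compact--open topology controls $f_p$ only in the $C^0$‑sense on $K$, with no grip on derivatives, so some care is required to run the Fubini estimate uniformly in $p$ and to rule out the densities developing spikes. This input really is needed: without it one has only weak convergence of ${f_p}_*\mu$ as measures, which is strictly weaker than continuity of $p\mapsto{f_p}_*\mu(U)$ on all open $U$ --- already on $\mathbb R$, taking $U$ to be the complement of a fat Cantor set, the naive bound $\limsup_{p\to p_0}\mu(f_p^{-1}(U))\le\mu(f_{p_0}^{-1}(\cl U))$ loses precisely the boundary term $\mu(f_{p_0}^{-1}(\bd U))$.
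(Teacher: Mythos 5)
The first half of your argument --- lower semicontinuity of $p\mapsto\mu(f_p^{-1}(U))$ for open $U$, via an inner compact set $C\subseteq f_{p_0}^{-1}(U)$ and the neighborhood $U_{C,U}$ in the compact--open topology --- is correct, and it is essentially one half of the paper's own sandwich argument. The genuine gap is in the other half. You reduce everything to the claim that the densities of ${f_p}_*\mu$ are \emph{uniformly} absolutely continuous for $p$ near $p_0$, and you justify this only by ``covering $f_{p_0}(K)$ by finitely many trivializing sets and applying Fubini''. That argument gives absolute continuity of each ${f_p}_*\mu$ separately (this is exactly the paper's preceding lemma), but nothing uniform in $p$: the trivializing charts of $f_p$ need not be related to those of $f_{p_0}$, and the compact--open topology is a $C^0$ topology which puts no constraint at all on how $f_p$ distorts measure. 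Concretely, take $X=Y=\mathbb R$, $\mu$ Lebesgue measure restricted to $[-1,2]$, $F\subset[0,1]$ a fat Cantor set, and homeomorphisms $f_{1/n}\to f_0=\mathrm{id}$ uniformly which on each subinterval of length $1/n$ crush a null set onto the part of $F$ in that subinterval (so that $f_{1/n}^{-1}(F)$ is null); each $f_{1/n}$ is a locally trivial bundle, yet for the open set $W=(0,1)\setminus F$ one has ${f_{1/n}}_*\mu(W)\to 1$ while ${f_0}_*\mu(W)=1-\lambda(F)$, and the densities of ${f_{1/n}}_*\mu$ are not uniformly integrable. So the step you yourself flag as the ``main obstacle'' is not merely delicate: it cannot be extracted from compact--open continuity plus local triviality, and it is precisely where the boundary term $\mu\bigl(f_{p_0}^{-1}(\bd U)\bigr)$ that you mention gets lost. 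Your weak-compactness endgame (uniform integrability plus weak convergence implies setwise convergence) is fine as an implication, but it consumes an input you never produce.

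For comparison, the paper does not go through weak convergence or any uniformity-in-$p$ statement about densities. It sandwiches $W'=f_q^{-1}(W)$ on the support $C=\suppo\mu$ between a compact $C_1\subseteq W'$ and an open $C_2\supseteq\cl W'$ with $\mu(C_2)-\mu(C_1)$ small (using only that $\mu$ itself is an $L_1$ measure on $X$), and then applies compact--open continuity twice, to get $f_p(C_1)\subseteq W$ and $f_p(C\setminus C_2)\cap\cl W=\emptyset$ for $p$ near $q$; this traps $\mu(f_p^{-1}(W))$ between $\mu(C_1)$ and $\mu(C_2)$. Note that this route, too, silently needs $\mu(\cl W')$ to be close to $\mu(W')$, i.e.\ control of the mass that $f_q^{-1}$ assigns to $\bd W$ --- harmless for the projection maps the lemma is actually applied to later in the paper, but it is the same soft spot your example with the fat Cantor set probes. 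If you want to close your argument, the honest fix is to add (or invoke from the intended applications) a quantitative regularity hypothesis on the maps $f_p$ that bounds the pushforward densities locally uniformly in $p$, rather than hoping uniform absolute continuity follows from the stated hypotheses.
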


\begin{proof}
Let $C=\suppo \mu$. For any $q\in P$ take a precompact neighborhood $V\supseteq f_q(C)$. Then by the definition of compact-open continuity, there exists a neighborhood $U\ni q$ such that $f_p(C)\subseteq V$ for all $p\in U$. Hence, we can assume the supports of ${f_p}_*(\mu)$ to be contained in a single compact set.

Take an open set $W\subseteq Y$. Let us show that $\mu(f_p^{-1}(W))$ depends continuously on $p$ in $q$. As it was mentioned, the set $W$ may be considered precompact. Put $W' = f_q^{-1}(W)$ and find a compact set $C_1$ and an open set $C_2$ so that 
$$
C_1\subseteq W'\subseteq\cl W'\subseteq C_2\cap C\subseteq C
$$ 
and 
$$
|\mu(C_1) - \mu(W')| , |\mu(C_2) - \mu(W')| < \varepsilon.
$$
This can be done since $\mu$ is given by $L_1$-function.

Then $f_p(C_1)\subseteq W$ and $f_p(C\setminus C_2)\cap\cl W=\emptyset$ for any $p$ in some neighborhood of $q$ by the definition of comact-open topology. Hence for such $p$ the sets $f_p^{-1}(W)$ are between $C_1$ and $C_2$ (by inclusion), and their measures differ from $\mu(W')$ by at most $\varepsilon$.
\end{proof}

We also need a generalization of the previous lemma to the case, when the maps are not locally trivial bundles, but they are locally trivial bundles almost everywhere.

\begin{lem}
\label{mesprojcont} Consider a family of maps $f_p : X\setminus S_p \to Y$, where $S_p$ are some closed subsets, depending on $p\in P$. Let every $f_p$ be a locally trivial bundle on its domain. Consider also a measure $\mu$ on $X$ with compact support. Suppose for any $q\in P$
$$
\forall\varepsilon>0\ \exists\ \text{a neighborhood}\ U\ni q :
\mu(\bigcup_{p\in U} S_p) < \epsilon.
$$
Let the map $f_p$ restricted to $X\setminus \bigcup_{p\in U} S_p$ depend continuously on $p\in U$.

Then the measures ${f_p}_*\mu$ depend continuously on $p\in P$.
\end{lem}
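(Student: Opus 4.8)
The plan is to reduce this to the already-established Lemma~\ref{mesprojcont0} by excising a small \emph{open} neighbourhood of the ``bad'' sets $S_p$.

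First I would record two preliminaries. Applying the hypothesis with $q=p$ gives $\mu(S_p)=0$ for every $p$, so $(f_p)_*\mu$, defined by $(f_p)_*\mu(A)=\mu(f_p^{-1}(A))$, is a well-defined finite Borel measure on $Y$ (the map $f_p$ being continuous off the $\mu$-null set $S_p$); and since $f_p$ restricted to $X\setminus S_p$ is a locally trivial bundle while $\mu$ restricted there is still an $L_1$-measure, Fubini's theorem shows that $(f_p)_*\mu$ is locally given by an $L_1$-function, exactly as in the earlier lemma. So it remains to prove continuous dependence, which by definition means: for each open $W\subseteq Y$ and each $q\in P$ the function $p\mapsto\mu(f_p^{-1}(W))$ is continuous at $q$.

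To prove that, I would fix $q$ and $\varepsilon>0$, choose a neighbourhood $U\ni q$ with $\mu(\bigcup_{p\in U}S_p)<\varepsilon$ on which $f_p$ restricted to $X\setminus\bigcup_{p\in U}S_p$ depends continuously on $p$, and then, using outer regularity of the absolutely continuous measure $\mu$, pick an \emph{open} set $O\supseteq\bigcup_{p\in U}S_p$ with $\mu(O)<\varepsilon$. Setting $C=\suppo\mu$ and $K=C\setminus O$, the set $K$ is compact, is contained in $X\setminus\bigcup_{p\in U}S_p$, and satisfies $\mu(X\setminus K)\le\mu(O)<\varepsilon$; hence $|\mu(f_p^{-1}(W))-\mu(f_p^{-1}(W)\cap K)|<\varepsilon$ for all $p\in U$ and all open $W$, so it is enough to get continuity at $q$ of $p\mapsto\mu(f_p^{-1}(W)\cap K)$. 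On the compact set $K$ the situation of Lemma~\ref{mesprojcont0} is in force: each $f_p$ with $p\in U$ is defined and locally trivial on its open domain $X\setminus S_p\supseteq K$, and the restrictions $f_p|_K$ depend continuously on $p$ in the compact-open topology, being restrictions of the continuous family on $X\setminus\bigcup_{p\in U}S_p$. So I would rerun the argument of that lemma with $C$ replaced by $K$: the images $f_p(K)$ stay in a fixed precompact set for $p$ near $q$, so $W$ may be taken precompact; then $f_q^{-1}(W)\cap K$ is sandwiched between a compact $C_1$ and an open $C_2$ with $|\mu(C_i)-\mu(f_q^{-1}(W)\cap K)|<\varepsilon$; and compact-open continuity of the $f_p|_K$ forces $f_p^{-1}(W)\cap K$ to lie between $C_1$ and $C_2$ for all $p$ in a smaller neighbourhood of $q$. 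Since $\varepsilon$ is arbitrary, $p\mapsto\mu(f_p^{-1}(W))$ is continuous at $q$.

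I expect the one genuinely new point --- everything else being mechanical or quoted from Lemma~\ref{mesprojcont0} --- to be the excision step, where \emph{measure} and \emph{topology} must be controlled simultaneously: the removed set $O$ has to be open, so that its complement is closed and still carries the whole continuously varying family of bundle maps together with a compact piece of $\suppo\mu$, and it also has to have small $\mu$-measure; it is precisely the hypothesis $\mu(\bigcup_{p\in U}S_p)<\varepsilon$ together with outer regularity of absolutely continuous measures that makes both possible at once. As in Lemma~\ref{mesprojcont0}, the ``outer'' half of the sandwich needs $W$ precompact and a little care with closures, but that is unchanged here.
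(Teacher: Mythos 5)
Your proposal is correct and follows essentially the same route as the paper: excise the bad set $\bigcup_{p\in U}S_p$ near $q$ using the measure hypothesis, invoke (or rerun) Lemma~\ref{mesprojcont0} on what remains, note the pushforward measures change by at most $\varepsilon$, and let $\varepsilon\to 0$. Your extra step of enlarging $\bigcup_{p\in U}S_p$ to an open set $O$ and working on the compact set $K=\suppo\mu\setminus O$ is just a more careful implementation of the paper's direct passage to $X'=X\setminus\bigcup_{p\in U}S_p$, not a different argument.
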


\begin{proof}
For any $q\in P$ take its respective neighborhood $U$ and consider $X'=X\setminus \bigcup_{p\in U} S_p$. Now apply Lemma~\ref{mesprojcont0}, the measure projections are changed by at most $\varepsilon$ by going to $X'$, hence the continuity follows by going to the limit $\varepsilon\to 0$.
\end{proof}

\begin{defn}
Call a measure \emph{connected} if its support is connected.
\end{defn}

\begin{defn}
Call a measure $\mu$ with compact support on $\mathbb R^d$ \emph{$1$-convex} if the support of every its orthogonal projection to a curve is connected.
\end{defn}

\begin{defn}
\emph{A layer} in $\mathbb R^d$ is the set of points between two parallel hyperplanes, including the hyperplanes. The layer \emph{width} is the distance between these hyperplanes.
\end{defn}

We need some lemmas about the measure of layers and halfspaces.

\begin{lem}
\label{layermes} Consider a family of probabilistic measures $\{\mu_p\}_{p\in P}$ on $\mathbb R^d$, parameterized by a bicompact set $P$. Let all the supports of measures be contained in a single compact set. Then for any $\varepsilon>0$ there exists $\delta > 0$ such that for any layer $L$ of width $<\delta$ and any $p\in P$
$$
\mu_p(L) < \varepsilon.
$$
\end{lem}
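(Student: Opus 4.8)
My plan is to argue by contradiction, combining the compactness of $P$ with the absolute continuity of the measures $\mu_p$. The heuristic is that a layer of very small width meeting a fixed compact set is squeezed inside a thin \emph{open} layer around a single hyperplane, and an absolutely continuous measure puts almost no mass on such a thin open layer.

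Let $K$ be a compact set containing all the supports $\suppo\mu_p$ and set $R=\max_{x\in K}|x|$. A layer is determined by a unit normal $u\in S^{d-1}$ and offsets $s\le t$ as $\{x:s\le\langle x,u\rangle\le t\}$, with width $t-s$. If the conclusion fails for some $\varepsilon>0$, then for each $\delta>0$ I can pick $p\in P$ and a layer $L$ of width $<\delta$ with $\mu_p(L)\ge\varepsilon$; note that such an $L$ must meet $K$, since $\mu_p(L)\ge\varepsilon>0$, so its normal lies in the compact $S^{d-1}$ and its offsets in a bounded interval. Running $\delta\to0$ and passing to a subnet — here one works with nets rather than sequences, as $P$ is only assumed bicompact — I obtain layers $L_\alpha=\{s_\alpha\le\langle x,u_\alpha\rangle\le t_\alpha\}$ with $t_\alpha-s_\alpha\to0$, parameters $p_\alpha$ with $\mu_{p_\alpha}(L_\alpha)\ge\varepsilon$, and convergences $u_\alpha\to u$, $s_\alpha\to t$, $t_\alpha\to t$, $p_\alpha\to p$.

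Next I would fix the limit hyperplane $h=\{x:\langle x,u\rangle=t\}$. Since $\mu_p$ is absolutely continuous, $\mu_p(h)=0$, so continuity from above gives $\eta>0$ with $\mu_p(V)<\varepsilon$ for the open layer $V=\{x:|\langle x,u\rangle-t|<\eta\}$. Using the continuous dependence of $\mu_p$ on $p$ (a standing hypothesis of the paper — some such uniformity is genuinely needed, as rescaling a single measure shows), the map $p\mapsto\mu_p(V)$ is continuous, so $\mu_{p_\alpha}(V)<\varepsilon$ eventually. On the other hand, for $\alpha$ large and any $x\in K$ with $s_\alpha\le\langle x,u_\alpha\rangle\le t_\alpha$ one has $|\langle x,u\rangle-t|\le|\langle x,u_\alpha\rangle-t|+R|u-u_\alpha|<\eta$, because $s_\alpha,t_\alpha\to t$ and $u_\alpha\to u$; hence $L_\alpha\cap K\subseteq V$, and since $\mu_{p_\alpha}$ lives on $K$, $\mu_{p_\alpha}(L_\alpha)=\mu_{p_\alpha}(L_\alpha\cap K)\le\mu_{p_\alpha}(V)<\varepsilon$, contradicting $\mu_{p_\alpha}(L_\alpha)\ge\varepsilon$.

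The routine parts are the parametrization of layers and the subnet extraction; the only real point is that one must control the normal direction and the two offsets of $L_\alpha$ at the same time, and it is exactly the uniform boundedness of the supports (the constant $R$) that lets a slightly tilted thin layer be absorbed by the fixed open layer $V$. A more computational alternative avoids the contradiction: for fixed $p$ one bounds $\mu_p(L)$ by the integral of the density of $\mu_p$ over a set of Lebesgue volume $O(R^{d-1}\cdot\mathrm{width})$ and invokes absolute continuity of that integral, then upgrades to uniformity in $p$ by compactness and continuity; but I expect the contradiction argument above to be the cleanest to write up.
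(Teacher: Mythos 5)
Your proof is correct and follows essentially the same route as the paper's: argue by contradiction, note that layers carrying positive mass must meet the common compact support so the space of relevant layers is compact, pass to a limiting hyperplane $H$ (and limiting parameter $p$), take a thin open neighborhood of $H$ of small $\mu_p$-measure, observe that the thin layers eventually fall inside it on the support, and use continuity of $p\mapsto\mu_p(U)$ to reach the contradiction. Your only refinements — working with nets because $P$ is merely bicompact, and explicitly flagging that continuous dependence on $p$ is needed (the paper uses it implicitly) — are sound and do not change the argument.
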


\begin{proof}
Let all the supports be contained in some compact $S$.

Assume the contrary. Then there is a sequence of layers $L_i$ and values $p_i\in P$ such that the width of $L_i$ tends to zero, while the measures $\mu_{p_i}(L_i) > \varepsilon$. Since all the layers touch $S$, then the set of all possible layers is compact, we may assume that they tend to some hyperplane $H$. From the bicompactness we may also assume that $p_i$ tend to $p$. Now take small enough $\delta$ so that $\mu_p(H+B_\delta)$ ($\delta$-neighborhood of $H$) is less that $\varepsilon/2$. The sets $L_i\cap S$ are contained in $H+B_\delta$ for large enough $i$, therefore the measures $\mu_{p_i}(H+B_\delta)$ have upper limit $\varepsilon/2$, that is a contradiction.
\end{proof}

\begin{lem}
\label{halfspacemes}
Suppose a family of probabilistic measures $\{\mu_p\}_{p\in P}$ on $\mathbb R^d$ depends continuously on $p\in P$, let the topology of $P$ have countable base. Let the supports of all $\mu_p$ be contained in a single compact set $S$. Then the measure of a halfspace $\mu_p(H)$ depends continuously on the pair $(p, H)$.
\end{lem}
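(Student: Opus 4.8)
The plan is to prove joint continuity at an arbitrary pair $(p_0,H_0)$ by a sandwiching argument: every halfspace lying close to $H_0$ will be trapped, inside the compact set $S$, between two \emph{fixed} open halfspaces whose dependence on the parameter is already controlled by the hypothesis, and the error committed in this trapping will be made negligible by absolute continuity. I give the set of halfspaces the obvious topology coming from the parameterization $H_{u,t}=\{x:\langle x,u\rangle\le t\}$ with $u\in S^{d-1}$ and $t\in\mathbb R$, and I write $H_0=H_{u_0,t_0}$ and fix $R>0$ with $S\subseteq\{x:\|x\|\le R\}$.

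First I would record the easy reductions. Since $\suppo\mu_p\subseteq S$, each $\mu_p$ is concentrated on $S$, so one may replace any halfspace by its intersection with $S$; and since $\mu_p$ is absolutely continuous, the hyperplane $\bd H$ is $\mu_p$-null, whence $\mu_p(H)=\mu_p(\int H)$. Consequently, for a \emph{fixed} halfspace $H$ the map $p\mapsto\mu_p(H)$ is already continuous, directly from the hypothesis applied to the open set $\int H$. Thus the entire difficulty is uniformity in $H$ as it ranges over a neighborhood of $H_0$.

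Next, given $\varepsilon>0$, I would use the absolute continuity of the single measure $\mu_{p_0}$ (equivalently, Lemma~\ref{layermes} with a one-point parameter set) to pick $\eta>0$ so small that $\mu_{p_0}$ of every layer of width $\le 2\eta$ is less than $\varepsilon/3$. Set $H^-=\{x:\langle x,u_0\rangle<t_0-\eta\}$ and $H^+=\{x:\langle x,u_0\rangle<t_0+\eta\}$, two open halfspaces. The elementary estimate $|\langle x,u\rangle-\langle x,u_0\rangle|\le R\|u-u_0\|$ valid for $x\in S$ shows that as soon as $R\|u-u_0\|+|t-t_0|<\eta$ one has $H^-\cap S\subseteq H_{u,t}\cap S\subseteq H^+\cap S$, and therefore $\mu_p(H^-)\le\mu_p(H_{u,t})\le\mu_p(H^+)$ for \emph{every} $p$. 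Moreover $H^-\cap S\subseteq H_0\cap S\subseteq H^+\cap S$ and $H^+\setminus H^-$ is a layer of width $2\eta$, so $|\mu_{p_0}(H_0)-\mu_{p_0}(H^\pm)|<\varepsilon/3$.

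Finally, applying continuity in $p$ to the two fixed halfspaces $H^\pm$ gives a neighborhood $U\ni p_0$ on which $|\mu_p(H^\pm)-\mu_{p_0}(H^\pm)|<\varepsilon/3$; then for $p\in U$ and any $H=H_{u,t}$ with $R\|u-u_0\|+|t-t_0|<\eta$ the chain of inequalities above yields $|\mu_p(H)-\mu_{p_0}(H_0)|<\varepsilon$, which is the asserted joint continuity. The step I expect to be the main obstacle is precisely this passage from pointwise-in-$p$ continuity to something uniform over nearby halfspaces; it is handled by the sandwich together with the compactness of $S$, which is what keeps the symmetric differences $H\,\triangle\,H_0$ inside genuinely thin layers rather than wide wedges, while absolute continuity makes those thin layers negligible. (The countable-base hypothesis on $P$ is convenient if one prefers to verify continuity along sequences, but the argument above does not really use it.)
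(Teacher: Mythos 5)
Your proof is correct and follows essentially the same route as the paper: trap any nearby halfspace, inside the compact support set $S$, between two shifted halfspaces $H^{\pm}$, control the intervening layer by absolute continuity, and use the assumed continuity in $p$ for the two fixed halfspaces. The only (cosmetic) difference is that you argue with neighborhoods rather than sequences, which incidentally shows the countable-base hypothesis on $P$ is not needed here, whereas the paper's sequential argument implicitly relies on it.
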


A halfspace can be defined by the inequality $(n, x) \ge c$, where $n\in S^{d-1}, c\in\mathbb R$. So we parameterize halfspaces by $S^{d-1}\times\mathbb R$, and consider the continuity w.r.t. a halfspace under this identification.

\begin{proof}
Consider a sequence of halfspaces $H_i\to H$ and values $p_i\to p$. Let $H$ be defined by the inequality 
$$
(n, x) \ge c,
$$
put 
$$
H_\delta^- = \{x\in\mathbb R^d : (n,x) \ge c+\delta\},\quad
H_\delta^+ = \{x\in\mathbb R^d : (n,x) \ge c-\delta\}.
$$
For small enough $\delta$ the measures $\mu_p(H_\delta^+)$, $\mu_p(H_\delta^-)$ and $\mu_p(H)$ differ at most by $\varepsilon$. Moreover, for large enough $i$ we have
$$
S\cap H_\delta^- \subseteq S\cap H_i\subseteq S\cap H_\delta^+.
$$

Also, for large enough $i$ the differences $\mu_{p_i}(H) - \mu_p(H)$, $\mu_{p_i}(H_\delta^-) - \mu_p(H_\delta^-)$ and $\mu_{p_i}(H_\delta^+) - \mu_p(H_\delta^+)$ have absolute value at most $\varepsilon$. 

If follows now that $\mu_{p_i}(H_i)$ differ from $\mu_p(H)$ by at most $2\varepsilon$ for large enough $i$, that is exactly what is needed.
\end{proof}

\begin{lem}
\label{minmescont}
Let a family of probabilistic measures $\{\mu_p\}_{p\in P}$ on $\mathbb R^d$ depend continuously on $p\in P$, let the topology of $P$ have countable base. Let the supports of all $\mu_p$ be contained in a single compact set. Then the following function (the minimum is taken over halfspaces)
$$
f_p(y) = \min_{H\ni y} \mu_p (H)
$$
depends continuously on $p$ and $y$. 
\end{lem}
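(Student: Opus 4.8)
The plan is to reduce the minimum over all halfspaces containing $y$ to a minimum over a \emph{compact} family of halfspaces, and then combine Lemma~\ref{halfspacemes} with the standard fact that the minimum of a jointly continuous function over a compact parameter space depends continuously on the remaining parameters.

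First I would observe that for a (closed) halfspace $H=\{x : (n,x)\ge c\}$ with $y\in H$, i.e. $c\le (n,y)$, one has the inclusion $\{x : (n,x)\ge (n,y)\}\subseteq H$, and hence $\mu_p(\{x : (n,x)\ge(n,y)\})\le\mu_p(H)$ by monotonicity. Since each halfspace $\{x : (n,x)\ge(n,y)\}$ does contain $y$, this yields the identity
$$
f_p(y)=\min_{n\in S^{d-1}}\mu_p\bigl(\{x\in\mathbb R^d : (n,x)\ge(n,y)\}\bigr),
$$
where the right-hand minimum is attained because $n\mapsto\mu_p(\{x : (n,x)\ge(n,y)\})$ is continuous on the compact set $S^{d-1}$ (again by Lemma~\ref{halfspacemes}). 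In particular the minimum defining $f_p(y)$ is genuinely attained, and the noncompact family of all halfspaces through $y$ has been replaced by the compact family indexed by the outer normal $n\in S^{d-1}$; the degenerate direction $c\to-\infty$, in which a halfspace swallows the whole support, now plays no role since such halfspaces never have $y$ on their boundary.

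Next I would note that the assignment $(p,y,n)\mapsto\bigl(p,(n,(n,y))\bigr)$ is continuous from $P\times\mathbb R^d\times S^{d-1}$ to $P\times(S^{d-1}\times\mathbb R)$, the latter being exactly the parameter space of halfspaces used in Lemma~\ref{halfspacemes}. Composing with the map $(p,H)\mapsto\mu_p(H)$, which is jointly continuous by that lemma, shows that
$$
h(p,y,n):=\mu_p\bigl(\{x : (n,x)\ge(n,y)\}\bigr)
$$
is jointly continuous on $P\times\mathbb R^d\times S^{d-1}$.

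Finally I would invoke the elementary lemma that if $h(q,n)$ is continuous on $Q\times K$ with $K$ compact, then $q\mapsto\min_{n\in K}h(q,n)$ is continuous: the $\limsup$ inequality follows by evaluating at a minimizer for the limit point $q$, and the $\liminf$ inequality by extracting, from the minimizers at $q_i\to q$, a subsequence converging in $K$ and using continuity of $h$. Applying this with $Q=P\times\mathbb R^d$ (which is first countable, since $P$ has a countable base) and $K=S^{d-1}$ gives the desired continuity of $f_p(y)=\min_{n\in S^{d-1}}h(p,y,n)$ in the pair $(p,y)$. The only point requiring any care is the first step — that it suffices to minimize over halfspaces whose bounding hyperplane passes through $y$ — but this is immediate from the monotonicity of $\mu_p$ under inclusion, so no real obstacle arises beyond correctly packaging Lemma~\ref{halfspacemes}.
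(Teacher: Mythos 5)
Your proposal is correct and follows essentially the same route as the paper: both reduce the minimum to halfspaces whose bounding hyperplane passes through $y$, parameterize them by the outer normal $n\in S^{d-1}$, and then combine the joint continuity from Lemma~\ref{halfspacemes} with the compactness of $S^{d-1}$ to get continuity of the minimum. You merely spell out the monotonicity reduction and the minimum-over-a-compact-set argument, which the paper leaves implicit.
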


\begin{proof}
When taking the minimum, it is sufficient to consider the case, when $H$ contains $y$ on its boundary. In this case all possible $H$ are parameterized by the $d-1$-dimensional sphere, that is a compact set. 

Now it remains to note that by Lemma~\ref{halfspacemes} the expression to be minimized is a continuous function on $P\times\mathbb R^d\times S^{d-1}$, the minimum is taken over a compact $S^{d-1}$, and therefore it is continuous.
\end{proof}

\section{Central points of measures}
\label{centralpoints}

\begin{defn}
A point $x$ is a \emph{central point} of a measure $\mu$ on $\mathbb R^d$ if for any halfspace $H\ni x$
$$
\mu(H) \ge \frac{\mu(\mathbb R^d)}{d+1}.
$$
Denote the set of all central points for $\mu$ by $\cent\mu$.
\end{defn}

We need a lemma on the structure of the central point set for a measure with compact support. Of course, it is nonempty by the central point theorem of~\cite{neumann1945,rado1946}. In the sequel all the measures are probabilistic, absolutely continuous, and if they depend on a parameter $p\in P$, then the parameter set has a countable base of its topology.

\begin{lem}
\label{cptsetstruct} Let $\mu$ be a probabilistic measure with compact support on
$\mathbb R^d$. Then either $\cent\mu$ is a convex body (with nonempty interior), or a single point.
\end{lem}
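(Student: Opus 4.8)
The plan is to exploit absolute continuity to show that $\cent\mu$ cannot be flat and $\ge1$-dimensional at the same time. I first record the routine structural facts. For a unit vector $m$ put $c(m)=\max\{t:\mu\{w:(m,w)\ge t\}\ge\frac1{d+1}\}$; since a halfspace with inner normal $m$ containing $x$ has the smallest measure when $x$ lies on its boundary, one gets
$$
\cent\mu=\bigcap_{m\in S^{d-1}}\{x:(m,x)\le c(m)\},
$$
so $\cent\mu$ is closed and convex, it is bounded because a point far from $\suppo\mu$ is contained in a halfspace of zero measure (hence compact), and it is nonempty by the Neumann--Rado central point theorem. Because $\mu$ is absolutely continuous the distribution function $t\mapsto\mu\{(m,w)\ge t\}$ is continuous, so in fact $\mu\{(m,w)\ge c(m)\}=\frac1{d+1}$ exactly; this is used crucially below.

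Now suppose for contradiction that $\dim\cent\mu=k$ with $1\le k\le d-1$. Let $A=\aff\cent\mu$, let $V$ be the linear subspace orthogonal to $A$ (so $\dim V=d-k\ge1$), pick $x_0$ in the relative interior of $\cent\mu$, and set $T=\{m\in S^{d-1}:(m,x_0)=c(m)\}$. First I claim $T\subseteq V$. Indeed, if $m\in T$ and some $y\in\cent\mu$ had $(m,y)\ne(m,x_0)$, then (replacing $y$ by $x_0+t(x_0-y)$ for small $t>0$ if necessary, which lies in $\cent\mu$ because $x_0$ is relative-interior) we obtain $z\in\cent\mu$ with $(m,z)>(m,x_0)=c(m)$, contradicting $\cent\mu\subseteq\{(m,\cdot)\le c(m)\}$; hence $(m,\cdot)$ is constant on $\cent\mu$, i.e.\ $m\perp A$.

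Next I claim $0\in\conv T$. The normal cone of $\cent\mu$ at the relative-interior point $x_0$ equals $V$. If $0\notin\conv T$, then, $T$ being compact, separation gives $v\in V\setminus\{0\}$ and $\alpha>0$ with $(m,v)\ge\alpha$ for all $m\in T$; then for small $\epsilon>0$ the point $x_0-\epsilon v$ satisfies every inequality $(m,\cdot)\le c(m)$ — for $m$ with $(m,v)>0$ because the left-hand side only decreases, and for the remaining $m$ (a set disjoint from $T$) because the slacks $c(m)-(m,x_0)$ are uniformly positive on that compact set, using continuous dependence of $c(m)$ on $m$ (Lemma~\ref{halfspacemes}). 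Thus $x_0-\epsilon v\in\cent\mu\setminus A$, a contradiction. So $0=\sum_{i=1}^{p}\lambda_i m_i$ with $m_i\in T$, $\lambda_i>0$; since $T\subseteq V$ and $\dim V=d-k$, Carath\'eodory's theorem lets us take $p\le d-k+1\le d$.

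The balanced relation now finishes the proof. For every $x\in\mathbb R^d$ we have $\sum_i\lambda_i\big((m_i,x)-(m_i,x_0)\big)=0$, so some $(m_i,x)\ge(m_i,x_0)$, i.e.\ $x$ lies in the halfspace $H_i=\{w:(m_i,w)\ge(m_i,x_0)\}$; hence $\bigcup_{i=1}^p H_i=\mathbb R^d$. But each $m_i\in T$ gives $\mu(H_i)=\mu\{(m_i,\cdot)\ge c(m_i)\}=\frac1{d+1}$, so
$$
1=\mu(\mathbb R^d)\le\sum_{i=1}^p\mu(H_i)=\frac{p}{d+1}\le\frac{d}{d+1}<1,
$$
a contradiction. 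Therefore $\dim\cent\mu\in\{0,d\}$, which is the assertion. I expect the delicate step to be the claim $0\in\conv T$: it is exactly there that the behaviour of $c(m)$ as a function of $m$ (its continuity, and the mild semicontinuity issue at directions along which $\mu$ has an empty slab) must be handled with care; the rest is either elementary or the clean counting argument of the last paragraph.
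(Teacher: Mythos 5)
Your reduction to the final counting argument is clean and correct: once you know that $0$ lies in the convex hull of a set of directions $m_1,\dots,m_p$ with $p\le d$, each defining a halfspace through $x_0$ of measure exactly $\frac{1}{d+1}$, the covering identity $\bigcup_i H_i=\mathbb R^d$ and $1\le\frac{p}{d+1}\le\frac{d}{d+1}$ finish the proof, and your Claim~1 ($T\subseteq V$) is also fine. The genuine gap is exactly where you suspected it: the step ``$0\in\conv T$''. Lemma~\ref{halfspacemes} gives continuity of $(p,H)\mapsto\mu_p(H)$, but it does \emph{not} give continuity of the quantile $c(m)$. Since $\mu$ is absolutely continuous, $t\mapsto\mu\{w:(m,w)\ge t\}$ is continuous, which makes $c$ only \emph{upper} semicontinuous: if for some direction $m_0$ this function is constant equal to $\frac{1}{d+1}$ on a whole interval (the support has an empty slab orthogonal to $m_0$ — nothing in the hypotheses of the lemma forbids this; the paper only imposes $1$-convexity in the later Lemmas~\ref{cptcont1},~\ref{cptcont2}), then $c$ can jump down at $m_0$. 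Consequently the slack $m\mapsto c(m)-(m,x_0)$, while positive on the compact set $\{m:(m,v)\le 0\}$, need not be bounded away from $0$ there, and your conclusion that $x_0-\epsilon v$ satisfies all constraints for small $\epsilon$ does not follow. If you try to repair this by passing to limits of directions with vanishing slack, continuity of halfspace measures only puts the limit direction $m^*$ in the larger set $\{m:\mu\{(m,w)\ge(m,x_0)\}=\frac1{d+1}\}$, for which your Claim~1 fails (the defining inequality is non-strict), so Carath\'eodory no longer gives $p\le d$ and the counting yields only $\le 1$, not $<1$. (A minor related point: $T$ itself need not be closed, for the same semicontinuity reason; that part is patchable by taking $\cl T$, since halfspaces through $x_0$ with normals in $\cl T$ still have measure exactly $\frac1{d+1}$ by Lemma~\ref{halfspacemes}.)

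For comparison, the paper's proof sidesteps the quantile entirely: it covers $\mathbb R^d$ by ``bad'' halfspaces of measure $\le\frac1{d+1}$ whose boundaries contain $L=\aff\cent\mu$, projects to the quotient $\mathbb R^d/L$, and applies Helly's theorem to the complements of $\delta$-neighborhoods of these halfspaces; the layer Lemma~\ref{layermes} (with $\varepsilon<\frac1{d(d+1)}$) absorbs precisely the thin slabs that cause the discontinuity of $c$, yielding a cover of $\mathbb R^d$ by $d$ bad halfspaces plus $d$ layers of total measure $<1$. So your plan can likely be rescued by replacing the exact quantile hyperplanes with bad halfspaces thickened by $\delta$-layers in the spirit of Lemma~\ref{layermes}, but as written the claim $0\in\conv T$ is not established.
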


\begin{proof}
Note that
$$
\cent\mu = \bigcap\{H : H\ \text{is a halfspace and}\ \mu(H)\ge \frac{d}{d+1}\}.
$$
This proves that $\cent\mu$ is a convex compact set. Let us call a halfspace $H$ ``bad'' if $\mu(H)\le \dfrac{1}{d+1}$.

Any point $y\not\in \cent\mu$ is contained in some bad halfspace. Let us tend a point $y$ to some $x\in\bd(\cent\mu)$. From the compactness of the space of normals to a hyperplane and Lemma~\ref{halfspacemes} we obtain, that the limit point $x$ is also in some bad halfspace.

Assume that the interior of $\cent\mu$ is empty and $\cent\mu$ is not a single point. Denote the affine hull of $\cent\mu$ by $L$. The space $\mathbb R^d$ is covered by bad halfspaces. Moreover, it is covered by bad halfspaces, whose boundary contain $L$. Denote the latter set of halfspaces by $\mathcal H$.

Consider the quotient space $\mathbb R^d/L$ and the projection of $\mu$ to it. Take some $\varepsilon < \dfrac{1}{d(d+1)}$ and $\delta$ from Lemma~\ref{layermes}. Now consider the projections to $\mathbb R^d/L$ of the following sets: the complements of $\delta$-neighborhoods of all halfspaces in $\mathcal H$, let their projections form the family $\mathcal G$. 

The intersection $\bigcap \mathcal G$ is empty, and therefore some $d$ of those halfspaces have an empty intersection by Helly's theorem. Consider now the preimages of those $d$ halfspaces in $\mathbb R^d$, they cover $\mathbb R^d$ up to $d$ layers of width
$\delta$. But the latter is impossible since
$$
\mu(\mathbb R^d) < \frac{d}{d+1} + d\varepsilon < 1,
$$
that is a contradiction.
\end{proof}

We need some claims about the continuity of $\cent\mu$ as a function of $\mu$. Denote an open ball of radius $\varepsilon$ centered at the origin by $D_\varepsilon$, denote its closure by $B_\varepsilon$. We also denote the $\varepsilon$-neighborhood of a set $X\subseteq\mathbb R^d$ by
$$
X+D_\varepsilon = \{x+y: x\in X,\ y\in D_\varepsilon\}.
$$

\begin{lem}
\label{cptcont1} Suppose a family of $1$-convex probabilistic measures $\mu_p$ on $\mathbb R^d$ depends on $p\in P$ continuously, and all their supports are contained in some compact set. Then for any $q\in P$
$$
\forall \varepsilon>0\ \exists\ \text{a neighborhood}\ U\ni q : p\in U\implies
\cent\mu_p\subseteq \cent\mu_q + D_\varepsilon.
$$
\end{lem}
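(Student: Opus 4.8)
\emph{Proof plan.} The heart of the matter is that, by Lemma~\ref{minmescont}, the set $\cent\mu_p$ is the super-level set $\{y : f_p(y)\ge\frac{1}{d+1}\}$ of the function $f_p(y)=\min_{H\ni y}\mu_p(H)$, and $f$ depends continuously on $(p,y)$ jointly. Granting that, the statement is just the (sequential) upper semicontinuity of the correspondence $p\mapsto\cent\mu_p$, and the plan is to prove it by contradiction.

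First I would record two routine facts. (i) If $S$ is a compact set containing all the supports $\suppo\mu_p$, then $\cent\mu_p\subseteq\conv S$ for every $p$: a point outside $\conv S$ can be strictly separated from $\suppo\mu_p$ by some halfspace $H$ with $\mu_p(H)=0<\frac{1}{d+1}$, so it is not a central point. (ii) The set $\cent\mu_q+D_\varepsilon$ is open, hence its complement is closed. Next, assuming the conclusion fails for some $q$ and some $\varepsilon>0$, I would choose a countable neighbourhood base $U_1\supseteq U_2\supseteq\cdots$ of $q$ and pick $p_i\in U_i$ together with $x_i\in\cent\mu_{p_i}\setminus(\cent\mu_q+D_\varepsilon)$, so that $p_i\to q$. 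By (i) the points $x_i$ lie in the compact set $\conv S$, so after passing to a subsequence $x_i\to x$; by (ii) $x\notin\cent\mu_q+D_\varepsilon$, whence $\dist(x,\cent\mu_q)\ge\varepsilon>0$ and in particular $x\notin\cent\mu_q$. On the other hand $f_{p_i}(x_i)\ge\frac{1}{d+1}$ for all $i$, so the joint continuity of $f$ from Lemma~\ref{minmescont} gives $f_q(x)\ge\frac{1}{d+1}$, i.e.\ $x\in\cent\mu_q$ --- a contradiction.

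I do not expect a real obstacle here: the genuine analytic work (continuity of the minimum over halfspaces, resting on Lemma~\ref{halfspacemes}) has already been done, and what remains is bookkeeping --- confirming that the central-point map takes values in a fixed compact set and that $\varepsilon$-neighbourhoods are open, so that the limit point $x$ stays outside $\cent\mu_q$. I would also note that this inclusion uses neither the $1$-convexity of the $\mu_p$ nor the connectedness of their supports; those hypotheses should instead be needed for the reverse inclusion $\cent\mu_q\subseteq\cent\mu_p+D_\varepsilon$, where $\cent\mu_q$ may degenerate to a single point (cf.\ Lemma~\ref{cptsetstruct}) and one must rule out a sudden collapse of $\cent\mu_p$.
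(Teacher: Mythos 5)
Your proof is correct, and it rests on the same key input as the paper's --- the joint continuity of $f_p(y)=\min_{H\ni y}\mu_p(H)$ from Lemma~\ref{minmescont} --- but the packaging is genuinely different. The paper argues pointwise on the complement of $\cent\mu_q+D_\varepsilon$: it separates such a point by a halfspace disjoint from $\cent\mu_q$, invokes $1$-convexity to make its measure \emph{strictly} less than $\frac{1}{d+1}$, and then uses the attained maximum of $f_q$ on the complement (together with $f_q$ vanishing outside $\conv\suppo\mu_q$) to push the strict inequality to nearby $p$. You instead use the exact identity $\cent\mu_p=\{y: f_p(y)\ge\frac{1}{d+1}\}$ and deduce the inclusion as upper semicontinuity of closed superlevel sets via a sequential compactness argument, with compactness supplied by $\cent\mu_p\subseteq\conv S$; the extraction of a sequence $p_i\to q$ is legitimate because of the paper's standing assumption (Section~\ref{centralpoints}) that $P$ has a countable base. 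Your side remark is also right and is the most interesting divergence: since $y\notin\cent\mu_q$ already means \emph{by definition} that some halfspace through $y$ has measure $<\frac{1}{d+1}$, the strictness the paper extracts from $1$-convexity is automatic, so this direction of continuity needs neither $1$-convexity nor connectedness --- those hypotheses are only needed for Lemma~\ref{cptcont2}, where $\cent\mu_p$ could otherwise collapse. Your route thus buys a slightly cleaner proof under weaker hypotheses, at the (negligible) cost of relying on first countability of $P$, while the paper's max-over-the-complement argument is sequence-free but, as written, carries a redundant appeal to $1$-convexity and a small uniformity-in-$y$ step that your compactness argument handles automatically.
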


\begin{proof}
Let $\cent\mu_q = C$.

Take some $\varepsilon > 0$. For any point $y\not\in C+D_{\varepsilon}$ there exists a halfspace $H$ such that $H\cap C=\emptyset$, $y\in\int H$, and $\mu_q (H) = \dfrac{1}{d+1}$. It follows from $1$-convexity that we can assume $\mu_q (H) <
\dfrac{1}{d+1}$. 

Consider a continuous function $f_q(y)$ of Lemma~\ref{minmescont}. Note that $f_q(y) < \dfrac{1}{d+1}$ on $\mathbb R^d\setminus C + D_\varepsilon$. Note that $f_q(y)=0$ outside the convex hull of $\suppo \mu_q$, and $f_q(y)$ depends continuously on $y$. Then its maximum on $\mathbb R^d\setminus C + D_\varepsilon$ is attained somewhere and is at most $\dfrac{1}{d+1}$. Hence the inequality $f_p(y) < \dfrac{1}{d+1}$ keeps for this $y\in \mathbb R^d\setminus C + D_\varepsilon$ and some neighborhood of $q$ in $P$. That is what we need.
\end{proof}

\begin{lem}
\label{cptcont2} Suppose a family of $1$-convex probabilistic measures $\mu_p$ on $\mathbb R^d$ depends on $p\in P$ continuously, and all their supports are contained in some compact set. Then for any $q\in P$
$$
\forall \varepsilon>0\ \exists\ \text{a neighborhood}\ U\ni q : p\in U\implies \cent\mu_p + D_\varepsilon\supseteq \cent\mu_q.
$$
\end{lem}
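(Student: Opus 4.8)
The plan is to prove the ``reverse inclusion'' of Lemma~\ref{cptcont1}: for $p$ close to $q$ the set $\cent\mu_p$ must come $\varepsilon$-close to every point of $\cent\mu_q$. I would argue by contradiction and compactness. Suppose the claim fails for some $\varepsilon>0$: then there is a sequence $p_i\to q$ and points $x_i\in\cent\mu_q$ with $\dist(x_i,\cent\mu_{p_i})\ge\varepsilon$. Since $\cent\mu_q$ is compact (Lemma~\ref{cptsetstruct}), pass to a subsequence with $x_i\to x\in\cent\mu_q$; then for large $i$ we have $\dist(x,\cent\mu_{p_i})\ge\varepsilon/2$, so $x$ lies in a ``bad'' halfspace for $\mu_{p_i}$, i.e. there is a halfspace $H_i\ni x$ with $\mu_{p_i}(H_i)<\frac{1}{d+1}$ (using that $x\notin\cent\mu_{p_i}+D_{\varepsilon/2}$ forces $f_{p_i}(x)<\frac{1}{d+1}$, where $f_p$ is the function from Lemma~\ref{minmescont}).

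Now the key step: I would use Lemma~\ref{minmescont} directly. The function $f_p(y)=\min_{H\ni y}\mu_p(H)$ is jointly continuous in $(p,y)$, all supports lying in a single compact set. Hence $f_{p_i}(x)\to f_q(x)$. But $x\in\cent\mu_q$ means $f_q(x)\ge\frac{1}{d+1}$ by definition of a central point. On the other hand, from the previous paragraph $f_{p_i}(x)=\min_{H\ni x}\mu_{p_i}(H)\le\mu_{p_i}(H_i)<\frac{1}{d+1}$ for all large $i$, so $\limsup_i f_{p_i}(x)\le\frac{1}{d+1}$. This does not yet contradict $f_q(x)\ge\frac1{d+1}$, so I need the strict version: exactly as in the proof of Lemma~\ref{cptcont1}, $1$-convexity lets me perturb $x$ slightly toward the interior of $\cent\mu_q$ to a point $x'$ with $f_q(x')>\frac{1}{d+1}$ strictly (if $\cent\mu_q$ is a single point, replace $x'$ by $x$ and note $f_q(x)\ge\frac1{d+1}$ together with the strict inequality $f_q$ attains on the complement gives what we need — this degenerate case must be handled separately). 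Then by joint continuity $f_{p_i}(x')>\frac{1}{d+1}$ for large $i$, hence $x'\in\cent\mu_{p_i}$, and $x'$ can be taken within $D_{\varepsilon/4}$ of $x$, so $\dist(x,\cent\mu_{p_i})<\varepsilon/4$, contradicting $\dist(x,\cent\mu_{p_i})\ge\varepsilon/2$ for large $i$.

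The main obstacle I expect is the degenerate case where $\cent\mu_q$ is a single point (or, more generally, has empty interior), since then I cannot move into the interior to get the strict inequality $f_q>\frac1{d+1}$; there I must instead exploit that, by Lemma~\ref{cptsetstruct}, this happens precisely when $\mu_q$ is ``tight'' and argue that the bad halfspaces through $x$ would already violate $\mu_q(\mathbb R^d)=1$ after the same Helly-plus-layer estimate used in the proof of Lemma~\ref{cptsetstruct}. A secondary technical point is justifying that $x\notin\cent\mu_p+D_\delta$ really yields a bad halfspace \emph{through $x$}: this is where one uses that $\cent\mu_p=\bigcap\{H:\mu_p(H)\ge\frac{d}{d+1}\}$, so $x$ outside means $x\in\int H$ for some $H$ with $\mu_p(H)<\frac1{d+1}$, and then $1$-convexity of $\mu_p$ upgrades the non-strict bound to a strict one, keeping the halfspace safely away from $\cent\mu_p$.
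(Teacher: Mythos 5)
Your main case is essentially the paper's argument: both rest on Lemma~\ref{cptsetstruct} (so $\cent\mu_q$ is either a single point or has nonempty interior), on the fact that $1$-convexity forces $f_q>\frac{1}{d+1}$ strictly at interior points of $\cent\mu_q$, and on the joint continuity of $f_p(y)$ from Lemma~\ref{minmescont}. The paper runs this uniformly (choose a compact convex $C'\subset\int\cent\mu_q$ with $C'+D_\varepsilon\supseteq\cent\mu_q$, get $f_q>\frac1{d+1}$ on $C'$, keep it for $p$ near $q$), while you run it pointwise by contradiction along a sequence $p_i\to q$; that difference is cosmetic, and your nondegenerate case is fine.

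The genuine gap is the single-point case, which you flag but do not actually close, and both of your suggested fallbacks fail. Replacing $x'$ by $x$ only gives the non-strict bound $f_q(x)\ge\frac1{d+1}$, and continuity cannot upgrade a non-strict inequality to $f_{p_i}(x)\ge\frac1{d+1}$; and the ``Helly-plus-layer'' idea is aimed at showing that $x$ itself is (nearly) a central point of $\mu_{p_i}$, which is more than the lemma asserts and is generally false --- $\cent\mu_{p_i}$ may be a single point displaced from $x$, in which case there \emph{is} a bad halfspace through $x$ for $\mu_{p_i}$ and no contradiction with total mass $1$ arises (note also that your bad halfspaces are measured by $\mu_{p_i}$, while you propose to contradict $\mu_q(\mathbb R^d)=1$). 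What the lemma needs in this case is only that $\cent\mu_{p_i}$ comes $\varepsilon$-close to the single point, and this is immediate from the other direction: if $\cent\mu_q=\{x_q\}$, then Lemma~\ref{cptcont1} gives $\cent\mu_p\subseteq\{x_q\}+D_\varepsilon$ for $p$ near $q$, and since $\cent\mu_p\neq\emptyset$ by the central point theorem, any of its points lies within $\varepsilon$ of $x_q$, i.e. $\cent\mu_q\subseteq\cent\mu_p+D_\varepsilon$. This is exactly how the paper disposes of the degenerate case; with that observation added, your proof is complete.
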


\begin{proof}
If $\cent\mu_q$ is a single point, then the statement follows from Lemma~\ref{cptcont1}. Otherwise $C=\cent\mu_q$ has a nonempty interior by Lemma~\ref{cptsetstruct}.

Consider a closed convex set $C'\subset \int C$ such that $C'+D_\varepsilon \supseteq C$. Consider the function $f_q(y)$ from Lemma~\ref{minmescont}, that is a continuous function of $q$ and $y$. The $1$-convexity implies that on $C'$ the function $f_q$ is strictly greater than $\dfrac{1}{d+1}$. Hence, for some neighborhood $U\ni q$ for any $p\in U$ this inequality still holds, and $C'\subseteq \cent\mu_p$ for $p\in U$.
\end{proof}

Actually the two previous lemmas show that the set $\cent\mu$ depends continuously on $\mu$ in the Hausdorff metric, provided that the supports of all the measures are contained in a single compact set, and all the measures are $1$-convex.

\section{Fixed point theorems for fiberwise maps of vector bundles}
\label{fixedpoints}

We have to generalize the Brouwer fixed point theorem for the case, when there are several fiberwise maps of a vector bundle and we search for a common fixed point of them.

We assume that every vector bundle $\eta$ has a continuous inner product on its fibers. In this case we denote the spaces of unit spheres and balls respectively by $S(\eta)$, $B(\eta)$. The coefficients of the cohomology are $Z_2$ in this section.

\begin{thm}
\label{fpbundles} Suppose we have a vector bundle $\eta\to X$, the fiber dimension $\dim\eta = n$, and a number $k\ge 1$. Also suppose that the topmost Stiefel-Whitney class $w_n(\eta)\in H^*(X)$ has the property $w_n(\eta)^k\neq 0$. 

Now consider $k+1$ fiberwise continuous maps $f_i : B(\eta) \to B(\eta)$ $(i=1,\ldots, k+1)$. Then there exists a point $x\in B(\eta)$ such that
$$
x = f_1(x) = \ldots = f_{k+1}(x).
$$
\end{thm}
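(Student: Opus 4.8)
The plan is to reduce the assertion to a statement about the non-vanishing of an equivariant Euler class, following the standard configuration-space/test-map paradigm. First I would set up the right object to which obstruction theory applies. Consider the map $F : B(\eta) \to \bigoplus_{i=1}^{k+1} \eta$ given fiberwise by $x \mapsto (x - f_1(x), \ldots, x - f_{k+1}(x))$; this is a fiberwise map over $X$. A common fixed point is precisely a zero of $F$, so it suffices to show $F$ must vanish somewhere on $B(\eta)$. If it did not, then restricting and normalizing, $F/|F|$ would give a section of the fiberwise sphere bundle $S(\eta^{\oplus(k+1)})$ over $B(\eta)$, which by the fiberwise homotopy equivalence $B(\eta) \simeq X$ (deformation retracting each ball to its center, along which each $f_i$ can be tracked) produces a nowhere-zero section of $\eta^{\oplus(k+1)}$ over $X$ — except that this last step needs care, since the retraction does not commute with the $f_i$.

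To handle that, I would instead argue directly on $B(\eta)$. The boundary behavior is the key input: for $x \in S(\eta)$ (a unit vector in a fiber), each $f_i(x)$ lies in the closed unit ball of the same fiber, so $\langle x, x - f_i(x)\rangle = 1 - \langle x, f_i(x)\rangle \ge 0$, i.e. the $i$-th component of $F$ points ``outward or tangentially.'' This means that on $\partial B(\eta) = S(\eta)$ the section $F$ of $\eta^{\oplus(k+1)}|_{B(\eta)}$ is fiberwise homotopic, rel nothing, to the section $x \mapsto (x,x,\ldots,x)$ (the diagonal), via the straight-line homotopy $t \mapsto (1-t)F(x) + t(x,\ldots,x)$, whose $i$-th component has inner product with $x$ equal to $(1-t)(1-\langle x, f_i(x)\rangle) + t > 0$, hence is nowhere zero. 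So if $F$ were nowhere zero on all of $B(\eta)$, then the diagonal section $x \mapsto (x,\ldots,x)$ of $\eta^{\oplus(k+1)}$ over $S(\eta)$ would extend to a nowhere-zero section over $B(\eta)$, i.e. would be null-homotopic as a section of the sphere bundle.

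The final step is to see that this is obstructed by the hypothesis $w_n(\eta)^k \ne 0$. The diagonal section of $\eta^{\oplus(k+1)}$ restricted to $S(\eta)$ is, up to the obvious bundle isomorphism, the tautological section $x \mapsto x$ of the first summand together with $k$ further copies; equivalently one can project to $\eta^{\oplus k}$ (the last $k$ coordinates) to get a section of $S(\eta^{\oplus k})$ over $S(\eta)$, and a nowhere-zero extension over $B(\eta)$ would give a nowhere-zero section of $\eta^{\oplus k}$ over the Thom space, contradicting that its Euler (top Stiefel–Whitney) class, which is the image of $w_n(\eta)^k$ under the Thom isomorphism $H^*(X;\mathbb{Z}_2) \to \tilde H^{*+n}(\mathrm{Th}(\eta);\mathbb{Z}_2)$, is nonzero precisely when $w_n(\eta)^k \neq 0$. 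Concretely: a section of $S(\eta)$ over $X$ would make $w_n(\eta)=0$; here we have the universal section over $S(\eta)$ itself, and the obstruction to extending a diagonal-type section over the disk bundle is exactly $w_n(\eta)^{k}$ evaluated through the Gysin sequence of $S(\eta) \to X$. I would make this precise by using the Gysin/Thom sequence $\cdots \to H^{*-n}(X) \xrightarrow{\smile w_n(\eta)} H^*(X) \to H^*(S(\eta)) \to \cdots$: the image of the diagonal section class in $H^n(S(\eta))$ is the restriction of the Thom/Euler class, and its $k$-th power vanishes in $H^{kn}(S(\eta))$ iff $w_n(\eta)^k$ lies in the image of multiplication by $w_n(\eta)$, i.e. essentially iff $w_n(\eta)^{k+1}$ is ``responsible'' — the clean statement is that $w_n(\eta)^k \neq 0$ forces the relevant power of the restricted Euler class to be nonzero in $H^*(S(\eta))$, which is exactly the obstruction to the desired nowhere-zero extension.

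The main obstacle I expect is the bookkeeping in this last paragraph: identifying precisely which cohomology class is the obstruction to extending the diagonal section over the ball bundle and matching it with $w_n(\eta)^k$ via the Gysin sequence of $S(\eta)\to X$ (as opposed to over $X$ directly). Everything else — the fixed-point-to-zero reduction and the boundary homotopy pushing $F$ to the diagonal — is routine once the inner-product inequality on $S(\eta)$ is observed.
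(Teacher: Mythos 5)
Your reduction and boundary analysis are sound and run parallel to the paper's proof: you assemble the maps into the section $F(x)=(x-f_1(x),\ldots,x-f_{k+1}(x))$ of the pulled-back bundle $\eta'^{\oplus(k+1)}$ over $B(\eta)$, and, under the contradiction hypothesis that $F$ never vanishes, your straight-line homotopy shows $F|_{S(\eta)}$ is homotopic through nowhere-zero sections to the diagonal $(x,\ldots,x)$. (The paper instead uses the components $x-f_1(x)$ and $f_i(x)-f_1(x)$, rescales by $1-\varepsilon$ and passes to a compact limit; your contradiction formulation avoids that step, which is a mild simplification.) The genuine gap is the final obstruction step, where you try to detect the obstruction in the \emph{absolute} cohomology of $S(\eta)$ via the Gysin sequence. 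This cannot work: the restriction of $\pi^*w_n(\eta)$ to $S(\eta)$ is always zero, because the tautological section $x\mapsto x$ of $\pi^*\eta$ is nowhere zero there; equivalently, in the Gysin sequence $w_n(\eta)^k=w_n(\eta)\cdot w_n(\eta)^{k-1}$ lies in the image of cup product with $w_n(\eta)$ for every $k\ge 1$, so its pullback to $S(\eta)$ vanishes regardless of whether $w_n(\eta)^k\neq 0$. Hence your ``clean statement'' that $w_n(\eta)^k\neq 0$ forces a nonzero power of the restricted Euler class in $H^*(S(\eta))$ is false. The alternative route you sketch is also invalid as stated: projecting a nowhere-zero section of $\eta'^{\oplus(k+1)}$ onto its last $k$ coordinates need not give a nowhere-zero section of $\eta'^{\oplus k}$, and ``a section of $\eta^{\oplus k}$ over the Thom space'' is not well-formed, since the bundle does not descend to $B(\eta)/S(\eta)$.

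The correct home for the obstruction is the relative group $H^{(k+1)n}(B(\eta),S(\eta);Z_2)$, i.e. the reduced cohomology of the Thom space, and this is exactly how the paper argues. Over $S(\eta)$ the diagonal partial section $(x,\ldots,x)$ is homotopic through nowhere-zero sections to $(x,0,\ldots,0)$ (shrink the last $k$ coordinates; the first stays nonzero on the sphere), and by the multiplicativity of the relative Euler class for direct sums of partial sections its class equals $u(\eta)\cdot\pi^*\bigl(w_n(\eta)^k\bigr)$, which by the Thom isomorphism is nonzero precisely when $w_n(\eta)^k\neq 0$. Since a partial section admitting a nowhere-zero extension over all of $B(\eta)$ has vanishing relative Euler class, this contradicts your hypothesis that $F$ never vanishes. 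With this replacement of your last paragraph the argument closes and becomes essentially the paper's proof.
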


In practice, the following version of this theorem may be more useful.

\begin{thm}
\label{fpbundles2} Suppose we have a vector bundle $\eta\to X$, the fiber dimension $\dim\eta = n$, and a number $k\ge 1$. Also suppose that the topmost Stiefel-Whitney class $w_n(\eta)\in H^*(X)$ has the property $w_n(\eta)^k\neq 0$. 

Now consider $k+1$ fiberwise continuous maps $f_i : \eta\to\eta$ $(i=1,\ldots, k+1)$ and a number $M\in\mathbb R$ with the following property: for vectors $x\in \eta$ with $|x|\ge M$ we have
$$
(x, f_1(x) - x) < 0.
$$
Then there exists a point $x\in \eta$ such that
$$
x = f_1(x) = \ldots = f_{k+1}(x).
$$
\end{thm}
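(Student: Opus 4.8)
The plan is to reduce Theorem~\ref{fpbundles2} to Theorem~\ref{fpbundles} by a radial retraction that confines the relevant dynamics to a ball subbundle. First I would fix a radius $R > M$ large enough that the closed ball subbundle $B_R(\eta)$ (fibers of radius $R$) contains all the ``action'' we need; the precise choice of $R$ will be dictated by the estimate in the next step. Consider the fiberwise radial retraction $\rho : \eta \to B_R(\eta)$ that is the identity on $B_R(\eta)$ and sends $x$ with $|x|>R$ to $Rx/|x|$. Define $\tilde f_i = \rho\circ f_i$ restricted to $B_R(\eta)$; each $\tilde f_i : B_R(\eta)\to B_R(\eta)$ is fiberwise continuous. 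Since $B_R(\eta)$ is fiberwise homeomorphic to $B(\eta)$ and this homeomorphism preserves the bundle, Theorem~\ref{fpbundles} applies (the hypothesis $w_n(\eta)^k\neq 0$ is unchanged) and yields a common fixed point $x\in B_R(\eta)$ with $x=\tilde f_1(x)=\cdots=\tilde f_{k+1}(x)$.

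The heart of the argument is then to show that this $x$ is in fact a genuine common fixed point of the original maps, i.e.\ that the retraction $\rho$ was never actually applied at $x$. Suppose for contradiction that $|f_1(x)|>R$, so that $\tilde f_1(x) = R f_1(x)/|f_1(x)| = x$; in particular $x$ is a positive multiple of $f_1(x)$, namely $x = \lambda f_1(x)$ with $\lambda = R/|f_1(x)| \in (0,1)$, hence $|x|=R$. Now compute the sign of $(x, f_1(x)-x)$. We have $f_1(x) = x/\lambda$, so $f_1(x)-x = (1/\lambda - 1)x$ with $1/\lambda - 1 > 0$, and therefore $(x,f_1(x)-x) = (1/\lambda-1)|x|^2 > 0$. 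But $|x|=R>M$, so the hypothesis of the theorem forces $(x,f_1(x)-x)<0$: contradiction. Hence $|f_1(x)|\le R$, so $\tilde f_1(x)=f_1(x)$ and thus $f_1(x)=x$. A symmetric argument, or rather the observation that once $f_1(x)=x$ we already know $x=\tilde f_i(x)$ for all $i$: for $i\ge 2$, if $|f_i(x)|>R$ the same computation gives $x=\lambda_i f_i(x)$ with $|x|=R$, and then $x=f_1(x)$ combined with $|x|=R>M$ again contradicts $(x,f_1(x)-x)<0$ unless... — more simply: we have already established $|x|=\lvert f_1(x)\rvert$ is impossible to be $R$ from the $f_1$ analysis, so $|x|<R$? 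No: I must be careful here. Let me instead handle each $i$ on its own: for each $i$, if $\rho$ acted nontrivially on $f_i(x)$ then $|x|=R>M$, and then applying the hypothesis to this very point $x$ gives $(x,f_1(x)-x)<0$; but we will have separately shown $f_1(x)=x$ (the $i=1$ case needs its own treatment, and there $\rho$ acting nontrivially gave $(x,f_1(x)-x)>0$, a direct contradiction). So first dispatch $i=1$ as above to get $f_1(x)=x$; this does \emph{not} by itself pin down $|x|$, but it gives $(x,f_1(x)-x)=0$, which is consistent with $|x|<M$ and excludes nothing. For $i\ge 2$: if $|f_i(x)|>R$ then $|x|=R>M$, so $(x,f_1(x)-x)<0$ by hypothesis, contradicting $f_1(x)=x$. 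Hence $|f_i(x)|\le R$ and $f_i(x)=\tilde f_i(x)=x$ for all $i$.

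The step I expect to require the most care is the bookkeeping in the last paragraph: making sure the contradiction is extracted cleanly for $i=1$ versus $i\ge 2$, and in particular noticing that the hypothesis $(x,f_1(x)-x)<0$ is being used in two different ways — once as a direct sign contradiction for the retraction of $f_1$, and once, after we know $f_1(x)=x$, to rule out $|x|=R$ and thereby control the retraction of the remaining $f_i$. A minor technical point worth stating explicitly is the fiberwise continuity of $\rho$ and hence of the $\tilde f_i$ (the map $x\mapsto Rx/|x|$ is continuous away from the zero section, and agrees with the identity on $|x|=R$, so it glues to a continuous fiberwise self-map of $B_R(\eta)$), and the remark that $B_R(\eta)$ and $B(\eta)=B_1(\eta)$ are isomorphic as bundles-with-metric up to rescaling, so that Theorem~\ref{fpbundles} applies verbatim.
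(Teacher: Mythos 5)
Your proof is correct, but it takes a genuinely different route from the paper. The paper does not reduce Theorem~\ref{fpbundles2} to the statement of Theorem~\ref{fpbundles}; instead it re-runs the relative Euler class argument on the ball bundle of radius $M$, observing that the only place the hypothesis ``$f_i$ maps $B(\eta)$ into $B(\eta)$'' was used in the earlier proof is to show that the section $s_1 : x\mapsto x-f_1(x)$ is homotopic, through sections nonvanishing on the sphere bundle, to $s_0 : x\mapsto x$; under the new hypothesis the linear homotopy $(1-t)s_0+ts_1$ has no zeros on the radius-$M$ sphere bundle precisely because $(x,f_1(x)-x)<0$ there, and the rest of the obstruction argument goes through verbatim. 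You instead treat Theorem~\ref{fpbundles} as a black box: compose each $f_i$ with the fiberwise radial retraction $\rho$ onto $B_R(\eta)$ (any $R\ge M$ works; your ``large enough'' is not actually needed), apply Theorem~\ref{fpbundles} after the obvious rescaling $B_R(\eta)\cong B(\eta)$, and then use the sign condition pointwise to show the retraction was never triggered at the common fixed point --- first for $i=1$ via the direct sign contradiction $(x,f_1(x)-x)=(1/\lambda-1)|x|^2>0$ on $|x|=R$, and then for $i\ge 2$ using the already-established $f_1(x)=x$ to contradict $(x,f_1(x)-x)<0$ at $|x|=R$. Your two-stage bookkeeping is sound (the only fiber where $\rho\circ f_i(x)=x$ with $|f_i(x)|>R$ forces $|x|=R$, where the hypothesis bites), and it mirrors the asymmetry in the paper: the hypothesis constrains only $f_1$, the other maps being arbitrary. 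What each approach buys: yours is modular and elementary, requiring no re-examination of the obstruction-theoretic proof; the paper's is shorter once that proof is in hand, since it amounts to replacing one homotopy.
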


Note that in this theorem only one of the maps satisfies the condition, that guarantees a fixed point in every fiber. The other maps are arbitrary continuous fiberwise maps.

Note that if the space $X$ consists of one point, then Theorems~\ref{fpbundles} and \ref{fpbundles2} give the classical Brouwer theorem.

In the sequel we apply Theorem~\ref{fpbundles2} to the canonical vector bundle $\gamma_d^k\to G_d^k$ over the Grassmannian, and we need a lemma that was used, for example in~\cite{dol1993} in similar situations, and was known before, see~\cite{hil1980A} for example.

\begin{thm}
\label{fpgrass} Consider the canonical bundle over the Grassmanian $\gamma_d^k\to G_d^k$. Then we have 
$$
w_k(\gamma_d^k)^{d-k}\not=0\in H^*(G_d^k, Z_2).
$$
\end{thm}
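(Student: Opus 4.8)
\textbf{Proof strategy for Theorem~\ref{fpgrass}.}
The plan is to exhibit a nonzero class of the right dimension in $H^*(G_d^k;\mathbb Z_2)$ that is a power of $w_k(\gamma_d^k)$. The natural way is to use the complementary bundle: if $\gamma_d^{d-k}$ denotes the orthogonal complement bundle (the canonical $(d-k)$-plane bundle under the identification $G_d^k\cong G_d^{d-k}$), then $\gamma_d^k\oplus\gamma_d^{d-k}$ is the trivial $d$-plane bundle, so the total Stiefel--Whitney classes satisfy $w(\gamma_d^k)\,w(\gamma_d^{d-k})=1$. First I would recall the standard fact that $H^*(G_d^k;\mathbb Z_2)$ is generated by $w_1,\dots,w_k$ of $\gamma_d^k$ subject to the relations coming from the fact that the complementary classes $\bar w_1,\dots,\bar w_{d-k}$ (determined by $w\bar w=1$) satisfy $\bar w_j=0$ for $j>d-k$; concretely, the relation in top degree that I want is $\bar w_{d-k+1}=0,\dots$, but the one I actually use is an explicit formula for the fundamental class.

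The cleanest route is to use the Schubert-cell description: $H^*(G_d^k;\mathbb Z_2)$ has a basis of Schubert classes indexed by partitions $\lambda=(\lambda_1\ge\dots\ge\lambda_k)$ with $\lambda_1\le d-k$, and the top-dimensional class (dimension $k(d-k)$, the dimension of the manifold) corresponds to the rectangular partition $\lambda=(d-k,d-k,\dots,d-k)$. By the Giambelli/determinantal formula the Schubert class $\sigma_\lambda$ is a polynomial in the $w_i=\sigma_{(1^i)}$ (or in the special classes); in particular for the single-column partitions one has $\sigma_{(1^i)}=w_i(\gamma_d^k)$, and more to the point $w_k(\gamma_d^k)=\sigma_{(1^k)}$ is the class of a sub-Grassmannian. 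Then I claim $w_k(\gamma_d^k)^{d-k}=\sigma_{(d-k,\dots,d-k)}\neq 0$, which is exactly the Poincaré-dual of a point. To see the multiplicativity $\sigma_{(1^k)}^{d-k}=\sigma_{((d-k)^k)}$ one can argue by induction: $\sigma_{(1^k)}\cdot\sigma_{(j^k)}=\sigma_{((j+1)^k)}$ because in the Pieri rule for multiplying by $\sigma_{(1^k)}$ (a vertical strip of length $k$) the only partition inside the $k\times(d-k)$ box obtainable from $(j^k)$ by adding one box to each of the $k$ rows is $((j+1)^k)$, provided $j+1\le d-k$. Iterating from $j=1$ up to $j=d-k$ gives the result, and $\sigma_{((d-k)^k)}$ is the generator of $H^{k(d-k)}(G_d^k;\mathbb Z_2)\cong\mathbb Z_2$, hence nonzero.

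Alternatively, and perhaps more in the spirit of the paper, one can avoid Schubert calculus entirely and argue geometrically: $w_k(\gamma_d^k)$ is the $\mathbb Z_2$-Euler class of $\gamma_d^k$, so it is Poincaré dual to the zero locus of a generic section, i.e.\ to the sub-Grassmannian $G_{d-1}^k\subset G_d^k$ of $k$-planes lying in a fixed hyperplane. Taking $d-k$ sections associated to $d-k$ generic hyperplanes in general position, the corresponding zero loci intersect transversally in $G_{d-k+k}^{k}$... more precisely in the set of $k$-planes contained in the intersection of $d-k$ generic hyperplanes, which is a single $k$-plane $\mathbb R^k\subset\mathbb R^d$; this is one point of the Grassmannian, counted once mod $2$. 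Hence $w_k(\gamma_d^k)^{d-k}$ is dual to a point and is nonzero.

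\textbf{Main obstacle.} The substantive point is justifying that these $d-k$ intersections are transverse and meet in exactly one point (equivalently, that the Pieri computation lands on the rectangular partition with multiplicity one mod $2$); this is where the hypothesis about the dimensions is used, and it is the only place one must be careful. Everything else is standard algebraic topology of the Grassmannian, and indeed the statement is classical (a reference such as~\cite{hil1980A} is cited), so I would keep this part brief and cite it.
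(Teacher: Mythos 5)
Your argument is correct, but it is worth noting that the paper does not prove Theorem~\ref{fpgrass} at all: it is stated as a known fact, with references to~\cite{hil1980A} and to its earlier use in~\cite{dol1993}, so there is no "paper proof" to match. What you supply is a complete, standard derivation, and both of your variants work. The Schubert-calculus route is sound once you fix the convention $w_k(\gamma_d^k)=\sigma_{(1^k)}$ (mod $2$ the tautological bundle is self-dual, so this is the usual identification of the top Stiefel--Whitney class with the class of the sub-Grassmannian $\{V : V\subset H\}$, which has codimension $k$), and your Pieri step is right: multiplying by $\sigma_{(1^k)}$ adds a vertical strip of $k$ boxes with at most one per row, and inside the $k\times(d-k)$ box the only option starting from $(j^k)$ is $((j+1)^k)$, so $\sigma_{(1^k)}^{d-k}=\sigma_{((d-k)^k)}$, the generator of the top group. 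The geometric variant is even closer to self-contained: the "main obstacle" you flag, transversality of the $d-k$ loci $\{V : V\subset H_i\}$ at the unique common point $W=\bigcap_i H_i$, is settled by the tangent-space computation $T_W G_d^k=\mathrm{Hom}(W,\mathbb R^d/W)$, with the $i$-th locus having tangent space $\mathrm{Hom}(W,H_i/W)$; for hyperplanes in general position these subspaces have codimensions $k$ that add up, and their intersection is $\mathrm{Hom}(W,\bigcap_i H_i/W)=0$, so the mod $2$ intersection number is $1$ and $w_k(\gamma_d^k)^{d-k}$ is Poincar\'e dual (with $Z_2$ coefficients, so orientability is not an issue) to a point. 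In short: the paper buys brevity by citation; your proof buys self-containedness at the cost of either invoking mod $2$ Schubert calculus or doing one explicit transversality check, and either version would be an acceptable substitute for the citation.
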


\section{The relative Euler class}
\label{releuler}

We need a notion of the Euler class of a vector bundle in the relative cohomology of a pair to prove Theorem~\ref{fpbundles}. Some general information on vector bundles and their topology can be found in the textbooks~\cite{mishch1998, milsta1974}. The notion of the relative Euler class is frequently used implicitly in topological reasonings, here we fix some its properties.

Unless otherwise stated, for oriented bundles we consider the integer coefficients of cohomology, and for non-oriented bundles we consider coefficients $Z_2$.

\begin{defn}
Consider a pair $Y\subseteq X$ and an $m$-dimensional vector bundle $\eta\to X$. Suppose we have a section $s$ of $\eta|_Y$, with no zeros. We call such a construction $(\eta, s)$ a \emph{partial section}.
\end{defn}

We can also assume that the section $s$ is extended somehow to the entire $X$, allowing zeros on $X\setminus Y$. It is obvious from the convexity of fibers that all such extensions are homotopic, the homotopy having no zeros on $Y\times[0, 1]$.

The partial sections over $(X, Y)$ are classified by the maps of the pair $(X, Y)$ to the pair $(BO(m), BO(m-1))$, or $(BSO(m), BSO(m-1))$ for oriented bundles. Generally, we consider them up to homotopy.

The pair $(BO(m), BO(m-1))$ has the following geometric realization. Consider the canonical $m$-vector bundle $\gamma\to BO(m)$. Now the pair $(B(\gamma), S(\gamma))$ realizes $(BO(m), BO(m-1))$ up to homotopy. Remind the Thom isomorphism. There is an element $u\in H^m(BO(m), BO(m-1))$ such that the multiplication of $H^*(BO(m))$ by $u$ gives an isomorphism of $Z_2$-cohomology
$$
H^*(BO(m), BO(m-1)) = uH^*(BO(m)).
$$
The same is true for $(BSO(m), BSO(m-1))$ and integer coefficient cohomology.

\begin{defn}
The image of the Thom class $u\in H^m(BO(m), BO(m-1), Z_2)$ in $H^m(X, Y,
Z_2)$ is called \emph{the Euler class modulo $2$} of the partial section. In the oriented case the image of $u\in H^m(BSO(m), BSO(m-1), \mathbb Z)$ in $H^m(X, Y, \mathbb Z)$ is called \emph{the Euler class} of the partial section. Denote this class $e(\eta, s)$.
\end{defn}

Denote $\pt$ the one-point space. The following lemma gives the connection between the relative Euler class and the ordinary Euler class.

\begin{lem}
For a pair $(X\sqcup\pt, \pt)$ the relative Euler class is the same as the ordinary Euler class, or the topmost Stiefel-Whitney class in the non-oriented case.
\end{lem}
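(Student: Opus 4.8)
The plan is to unravel both sides of the claimed identity for the pair $(X \sqcup \pt, \pt)$ and check that the two constructions literally coincide. First I would observe that a partial section over $(X \sqcup \pt, \pt)$ is just a nowhere-zero section of the fiber $\eta|_{\pt}$; since that fiber is a vector space, any such section is unique up to the (contractible) homotopy mentioned after the definition of partial section, so the data $(\eta, s)$ over this pair carries no more information than the bundle $\eta \to X \sqcup \pt$ itself — or, restricting attention to $X$, than $\eta|_X$. Hence the classifying map $(X \sqcup \pt, \pt) \to (BO(m), BO(m-1))$ is, on the $\pt$ component, the inclusion of a point into $BO(m-1)$ (the classifying space of the trivialized/sectioned fiber), while on the $X$ component it is a classifying map $X \to BO(m)$ of $\eta|_X$ composed with the inclusion $BO(m) \hookrightarrow$ (the space $BO(m)$, viewed as the source of the pair).

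Next I would use the long exact sequence of the pair and the fact that $\pt \hookrightarrow X \sqcup \pt$ has a retraction, which splits the sequence: $H^m(X \sqcup \pt, \pt) \cong \widetilde{H}^m(X \sqcup \pt) \cong H^m(X)$ (the reduced cohomology of a space with an added basepoint is the unreduced cohomology of the original space, and here $X$ may be disconnected — the point is simply that the $\pt$-summand is killed). Under this identification the image of the relative Thom class $u \in H^m(BO(m), BO(m-1))$ is the image of $u$ under $H^m(BO(m), BO(m-1)) \to H^m(BO(m)) \cong H^m(BO(m))$ pulled back along the classifying map of $\eta|_X$; but the image of the Thom class under the map $H^m(BO(m), BO(m-1)) \to H^m(BO(m))$ forgetting the relativization is, by definition, the Euler class $e(\gamma) = w_m(\gamma)$ of the canonical bundle (this is the standard description of the Euler/top Stiefel–Whitney class as the restriction of the Thom class to the zero section). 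Pulling back along the classifying map then gives $e(\eta|_X)$, respectively $w_m(\eta|_X)$ in the $Z_2$ case. Since $e(\eta, s)$ was defined as exactly the image of $u$ in $H^m(X \sqcup \pt, \pt)$, we get $e(\eta, s) = e(\eta|_X)$ under the isomorphism $H^m(X \sqcup \pt, \pt) \cong H^m(X)$, which is the assertion.

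I expect the only genuine subtlety to be bookkeeping with the isomorphism $H^m(X \sqcup \pt, \pt) \cong H^m(X)$ and making sure the naturality squares commute — i.e. that "restrict the Thom class to the zero section, then pull back" equals "pull back, then restrict", and that the added basepoint really does split off. These are all standard facts about the Thom isomorphism and about cohomology of pairs with a retraction, so no hard computation is involved; the proof is essentially a diagram chase combined with the textbook identity (see \cite{milsta1974}) that the Euler class, resp. top Stiefel–Whitney class, of a bundle is the pullback of the Thom class under the zero section.
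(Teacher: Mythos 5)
Your argument is correct and is exactly the standard unravelling intended here (the paper states this lemma without proof, as immediate from the definition): under the splitting $H^m(X\sqcup\pt,\pt)\cong H^m(X)$, naturality sends the relative class to the pullback of the restriction of the Thom class to the zero section, which is $e(\eta|_X)$, resp.\ $w_m(\eta|_X)$, as in \cite{milsta1974}. One harmless inaccuracy: the nonzero section over $\pt$ is not in general unique up to homotopy (the nonzero vectors in the fiber form a space homotopy equivalent to $S^{m-1}$, disconnected when $m=1$), but this does not affect the proof, since your computation shows the image in $H^m(X)$ is independent of that choice.
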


It is more or less obvious from the definition that the relative Euler class is the first obstruction (possibly, modulo $2$) to extend a partial nonzero section to a complete nonzero section.

We need to take $\times$-products of relative sections.

\begin{defn}
Suppose there is a partial section $(\eta_1, s_1)$ over $(X_1, Y_1)$, and a partial section $(\eta_2, s_2)$ over $(X_2, Y_2)$. The sections $s_i$ can be extended over the respective $X_i$. Then in the $\times$-product bundle $\eta_1\times\eta_2$ the section $s_1+s_2$ is a partial section over the pair $(X_1\times X_2, (X_1\times Y_2)\cup (Y_1\times X_2))$. Call this bundle and section a \emph{$\times$-product of partial sections}.
\end{defn}

\begin{lem}
\label{eulermult} There is a formula for $\times$-product of partial sections
$$
e(\eta_1\times\eta_2, s_1+s_2) = e(\eta_1, s_1)\times e(\eta_2, s_2).
$$
\end{lem}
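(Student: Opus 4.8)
\textbf{Proof proposal for Lemma~\ref{eulermult}.}

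The plan is to reduce the statement to the naturality of the Thom class and the multiplicativity of Thom classes under $\times$-products. First I would recall that, by definition, a partial section $(\eta_i, s_i)$ over $(X_i, Y_i)$ is classified by a map of pairs $g_i : (X_i, Y_i) \to (BO(m_i), BO(m_i-1))$, realized geometrically as $(B(\gamma^{(i)}), S(\gamma^{(i)}))$ where $\gamma^{(i)} \to BO(m_i)$ is the canonical bundle; the Euler class $e(\eta_i, s_i)$ is then $g_i^*(u_i)$, where $u_i \in H^{m_i}(BO(m_i), BO(m_i-1))$ is the Thom class. So the whole statement is about how these classifying maps and Thom classes behave under $\times$-product.

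Next I would identify the classifying map of the $\times$-product partial section. The bundle $\eta_1 \times \eta_2 \to X_1 \times X_2$ is classified by $g_1 \times g_2$ composed with the Whitney-sum (external direct sum) map $BO(m_1) \times BO(m_2) \to BO(m_1+m_2)$. The nowhere-zero locus of the section $s_1 + s_2$ on $(X_1 \times Y_2) \cup (Y_1 \times X_2)$ corresponds, under this map, to the subspace of $BO(m_1+m_2)$ where the canonical bundle admits a nowhere-zero section on the relevant factor, i.e. it factors through $BO(m_1+m_2-1)$ on that subspace; concretely the pair $(BO(m_1), BO(m_1-1)) \times (BO(m_2), BO(m_2-1))$, with the product-pair boundary $(A_1 \times X_2) \cup (X_1 \times A_2)$, maps to $(BO(m_1+m_2), BO(m_1+m_2-1))$ and pulls the Thom class $u_{12}$ back to $u_1 \times u_2$. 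This is the universal form of the identity $e(\eta_1 \times \eta_2, s_1+s_2) = e(\eta_1,s_1) \times e(\eta_2,s_2)$, and the lemma follows by naturality: pulling back along $g_1 \times g_2$ and using the compatibility of the external cohomology cross product $H^{m_1}(X_1,Y_1) \otimes H^{m_2}(X_2,Y_2) \to H^{m_1+m_2}(X_1 \times X_2, (X_1 \times Y_2)\cup(Y_1 \times X_2))$ with pullback.

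The one technical point I would want to pin down carefully — and I expect this to be the main obstacle — is the multiplicativity of the \emph{relative} Thom classes: that the Thom class of $\gamma^{(1)} \times \gamma^{(2)}$, viewed as an element of $H^{m_1+m_2}$ of the product pair of disk-sphere pairs, restricts to $u_1 \times u_2$. For absolute Thom classes (of the Thom space $\mathrm{Th}(\gamma^{(1)}) \wedge \mathrm{Th}(\gamma^{(2)}) = \mathrm{Th}(\gamma^{(1)} \times \gamma^{(2)})$) this is the standard fact $u(\xi \oplus \eta) = u(\xi) \smile u(\eta)$; here I would translate it into the disk-bundle / relative-cohomology formulation via the excision isomorphism $H^*(B(\gamma), S(\gamma)) \cong \widetilde{H}^*(\mathrm{Th}(\gamma))$ and its evident compatibility with products, checking that the product pair $(B(\gamma^{(1)}),S(\gamma^{(1)})) \times (B(\gamma^{(2)}),S(\gamma^{(2)}))$ has the correct relative cohomology (the Künneth formula for pairs applies since the subspaces are cofibrations) and that the relative cross product lands in the Thom-class line as claimed. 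In the oriented case one runs the same argument with $BSO$ and integer coefficients, orientations multiplying in the obvious way; in the non-oriented case everything is over $Z_2$ and orientation issues disappear. Once this universal multiplicativity is in hand, the lemma is pure naturality.
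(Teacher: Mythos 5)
Your argument is exactly the paper's: the paper disposes of Lemma~\ref{eulermult} by noting that it ``follows easily from considering the classifying spaces and the multiplicativity of the Thom class,'' and your proposal simply carries out that reduction in detail (classifying maps to $(BO(m),BO(m-1))$, the external sum map, and the relative K\"unneth/excision bookkeeping for the Thom classes). The proposal is correct and matches the intended proof.
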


This lemma follows easily from considering the classifying spaces and the multiplicativity of the Thom class. If we consider the direct sum of two bundles over the same space, then we obtain the following lemma.

\begin{lem}
If we take the direct sum of two partial section: $(\eta_1,s_1)$ over $(X,Y_1)$ and $(\eta_2, s_2)$ over $(X,Y_2)$, then 
$$
e(\eta_1\oplus\eta_2, s_1\oplus s_2) = e(\eta_1, s_1)e(\eta_2, s_2)\in H^*(X, Y_1\cup Y_2).
$$
\end{lem}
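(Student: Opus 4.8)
The plan is to reduce the direct-sum statement to the already-established $\times$-product formula of Lemma~\ref{eulermult} by pulling back along the diagonal. Let $\Delta : X \to X \times X$ be the diagonal map. By the very definition of the direct sum, $\eta_1 \oplus \eta_2 = \Delta^*(\eta_1 \times \eta_2)$, and if $s_i$ is extended over all of $X$, then the extended section $s_1 \oplus s_2$ of $\eta_1 \oplus \eta_2$ is exactly the pullback $\Delta^*(s_1 + s_2)$ of the $\times$-product section. Since $s_i$ is nonzero on $Y_i$, the section $s_1 + s_2$ is nonzero on $(X \times Y_2) \cup (Y_1 \times X)$, and $\Delta^{-1}\bigl((X \times Y_2) \cup (Y_1 \times X)\bigr) = Y_1 \cup Y_2$, so $s_1 \oplus s_2$ is a genuine partial section over $(X, Y_1 \cup Y_2)$, as asserted.

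Next I would invoke naturality of the relative Euler class: the class $e(\eta,s)$ is defined as the image of the universal Thom class $u$ under the map $H^m(BO(m), BO(m-1)) \to H^m(X, Y)$ induced by a classifying map of the partial section, so for a pullback along a map of pairs $g : (X', Y') \to (X, Y)$ one has $e(g^*\eta, g^*s) = g^*\,e(\eta, s)$ — this is immediate because a classifying map for $(g^*\eta, g^*s)$ is obtained by composing $g$ with a classifying map for $(\eta,s)$. Applying this with $g = \Delta$ and combining with Lemma~\ref{eulermult} gives
$$
e(\eta_1\oplus\eta_2, s_1\oplus s_2) = \Delta^*\,e(\eta_1\times\eta_2, s_1+s_2) = \Delta^*\bigl(e(\eta_1, s_1)\times e(\eta_2, s_2)\bigr).
$$
Finally, the external product followed by $\Delta^*$ is, by definition, the internal cup product: $\Delta^*(a \times b) = a \smile b$, and this holds for the relative groups as well since the diagonal $(X, Y_1 \cup Y_2) \to (X\times X, (X\times Y_2)\cup(Y_1\times X))$ is a map of pairs inducing the relative cross product to cup product identity. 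This yields $e(\eta_1\oplus\eta_2, s_1\oplus s_2) = e(\eta_1,s_1)\,e(\eta_2,s_2)$ in $H^*(X, Y_1\cup Y_2)$.

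The only delicate point — and the step I would be most careful about — is checking that all of this is compatible at the level of \emph{pairs}, i.e. that the relative cross product $H^m(X,Y_1)\otimes H^n(X,Y_2) \to H^{m+n}(X\times X, (X\times Y_2)\cup(Y_1\times X))$ is defined and that $\Delta$ indeed gives a map of pairs landing in $(X, Y_1\cup Y_2)$; this is where the excision/homotopy hypotheses implicit in "nice" pairs are used, but for CW-pairs (which is the setting here) it is standard. Everything else is formal naturality of the Thom class. One could alternatively give a direct obstruction-theoretic argument: $e(\eta_1\oplus\eta_2, s_1\oplus s_2)$ is the primary obstruction to extending $s_1\oplus s_2$ over $X$, and a cochain-level computation shows this obstruction factors as the product of the two individual obstructions; but the diagonal-pullback route above is cleaner and reuses Lemma~\ref{eulermult} directly.
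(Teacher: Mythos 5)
Your argument is correct and follows essentially the same route as the paper, which obtains the direct-sum formula from Lemma~\ref{eulermult} precisely by viewing $\eta_1\oplus\eta_2$ as the diagonal pullback of $\eta_1\times\eta_2$ and using naturality of the relative Euler class together with the identity $\Delta^*(a\times b)=a\smile b$. Your write-up simply makes explicit the naturality and map-of-pairs details that the paper leaves implicit.
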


\section{Proofs of the fixed point theorems}
\label{fixedproofs}

\begin{proof}[Proof of Theorem~\ref{fpbundles}]
Note that it is sufficient to prove the theorem for a compact $X$, since if the class  $w_n(\eta)^k\neq 0$ in $H^*(X)$, then it is nonzero in some the cohomology of some compact subspace of $X$.

First let us multiply the maps $f_i$ in each fiber by a homothety with factor $(1-\varepsilon)$. It is sufficient to prove the theorem for new maps and every $\varepsilon>0$, then it follows for $\epsilon=0$ from compactness. So we assume that $f_i : B(\eta)\to B(\eta)\setminus S(\eta)$.

Consider the bundle $\eta'$ induced from $\eta$ by the natural projection
$B(\eta)\to X$. Define the map $s_1 : B(\eta)\to \eta'$ by the formula $s_1 : x\mapsto x-f_1(x)$, it is a partial section $(\eta', s_1)$ over $(B(\eta), S(\eta))$. Moreover, this partial section is homotopy equivalent to the standard partial section $(\eta', s_0)$ defined by $s_0 : x\mapsto x$. Hence the class $e(\eta', s_1)$ (as $e(\eta', s_0)$) equals the Thom class $u(\eta)\in H^*(B(\eta), S(\eta))$, it follows straight from the definition of the relative Euler class and the explicit realization of $(BO(m), BO(m-1))$, described above.

Now by Lemma~\ref{eulermult} and Thom's isomorphism
$$
e(\eta'\oplus\eta'^k, s_1\oplus 0) = u(\eta)w(\eta')^k\neq 0.
$$
Hence, the section $s_1\oplus 0$ cannot be extended to a nonzero section of $\eta'\oplus\eta'^k$ over $B(\eta)$.

Consider the sections $s_i$ $(i=2,\ldots, k+1)$ of $\eta'$ over $B(\eta)$, defined by
$$
s_i : x\mapsto f_i(x) - f_1(x).
$$
The section $s_1\oplus s_2\oplus \ldots \oplus s_{k+1}$ is homotopy equivalent over $S(\eta)$ to the section $s_1\oplus 0$ (the homotopy $s_1\oplus ts_2\oplus \ldots \oplus ts_{k+1}$ for $t\in[0, 1]$ does the job). Hence, it cannot be extended to nonzero section of  $\eta'^{k+1}$ over $B(\eta)$, and in some point $x\in B(\eta)$ we have
$$
x = f_1(x) = \ldots = f_{k+1}(x).
$$
\end{proof}

\begin{proof}[Proof of theorem~\ref{fpbundles2}]
In this theorem we take the ball bundle in $\eta$, where balls are of radius $M$, then we act similar to the previous proof.

In fact all the reasonings are valid, since we did not actually use that the image of 
$f_i$ is in $B(\eta)$, except one place. It was used to prove that the section $s_1 : x\mapsto x-f_1(x)$ is homotopy equivalent to the section $s_0 : x\mapsto x$. In this theorem it is done by the simple homotopy $(1-t)s_0 + t s_1$, which has no zeros on $S(\eta)\times[0, 1]$.
\end{proof}

\section{Some lemmas needed to prove Theorem~\ref{dualcpt}}
\label{fpcplemma}

The following lemma generalizes the discrete central point theorem and the Brouwer theorem about fixed points. Denote the index set
$$
[n] = \{1, 2,\ldots, n\}.
$$

\begin{lem}
\label{cptfp} Let $B\subset\mathbb R^d$ be a convex compact set. Suppose there are $n$ maps $f_i : B\to\mathbb R^d$ $(i\in[n])$. Denote
$$
l =\left\lfloor\dfrac{n+d}{d+1}\right\rfloor.
$$
Suppose that for any $x\in B$ at most $l-1$ points of $f_i(x)$ are outside $B$. Then there exists $x\in B$ such that any halfspace $H\ni x$ contains at least $l$ points of
$\{f_i(x)\}_{i\in [n]}$.
\end{lem}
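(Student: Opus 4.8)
The plan is to recast the statement as a fixed-point problem for the set-valued map sending a point $x$ to the ``central region'' of the moving configuration $\{f_i(x)\}_{i\in[n]}$, and to solve it with Kakutani's fixed point theorem. First I would record the properties of the discrete central region. For a finite (multi)set $P=\{p_1,\dots,p_n\}\subset\mathbb R^d$ let $\cent P$ be the set of points $x$ such that every closed halfspace containing $x$ contains at least $l$ points of $P$. Exactly as in Lemma~\ref{cptsetstruct}, one has $\cent P=\bigcap\{H:H\text{ a closed halfspace},\ |H\cap P|\ge n-l+1\}$, so $\cent P$ is convex and closed; a point outside $\conv P$ lies in a halfspace missing $P$ entirely, so $\cent P\subseteq\conv P$ is compact; and $\cent P\ne\emptyset$ by the discrete central point theorem (note $\lfloor\frac{n+d}{d+1}\rfloor=\lceil\frac{n}{d+1}\rceil$, so this is the notion of central point of Section~\ref{centralpoints} for the counting measure). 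Finally, since every halfspace containing $x$ contains a closed halfspace through $x$, membership $x\in\cent P$ already yields the conclusion of the lemma for the configuration $P$.

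The key geometric step, which is where the hypothesis on $B$ enters, is that $\cent\{f_i(x)\}\cap B\ne\emptyset$ for every $x\in B$. Indeed, if these two compact convex sets were disjoint they could be strictly separated by a hyperplane, giving a closed halfspace $H$ with $\cent\{f_i(x)\}\subseteq\int H$ and $H\cap B=\emptyset$. Choosing any central point $c\in\int H\subseteq H$, the definition of $\cent$ gives $|H\cap\{f_i(x)\}|\ge l$; but $H\cap B=\emptyset$ forces all of these points to lie outside $B$, contradicting the assumption that at most $l-1$ of the $f_i(x)$ do.

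Hence $\Phi(x):=\cent\{f_i(x)\}\cap B$ is a nonempty convex compact subset of $B$, and it remains to check that $\Phi$ has closed graph, after which Kakutani's theorem (a standard consequence of Brouwer's) furnishes $x_0\in\Phi(x_0)$, i.e. $x_0\in B$ with $x_0\in\cent\{f_i(x_0)\}$, which is exactly what is wanted. For the closed graph, suppose $x_k\to x$, $y_k\to y$, $y_k\in\Phi(x_k)$; then $y\in B$, and for a closed halfspace $H$ with $y\in\int H$ we get $y_k\in\int H$ for large $k$, hence $|H\cap\{f_i(x_k)\}|\ge l$. Since $[n]$ has only finitely many subsets, some fixed $I$ with $|I|\ge l$ satisfies $f_i(x_k)\in H$ for all $i\in I$ along a subsequence, and letting $k\to\infty$ (using continuity of the $f_i$ and closedness of $H$) gives $f_i(x)\in H$ for all $i\in I$, so $|H\cap\{f_i(x)\}|\ge l$. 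If $H$ merely contains $y$ on its boundary, apply this to the enlarged parallel closed halfspaces $H_\delta\supsetneq H$ with $y\in\int H_\delta$ and let $\delta\to 0$, using once more that $\{f_i(x)\}$ is finite; this shows $y\in\cent\{f_i(x)\}$.

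The main obstacle I expect is this last upper-semicontinuity step: the central region is genuinely only upper semicontinuous, not continuous, in the point configuration, so the boundary halfspaces must be handled via the enlargement trick together with the finiteness of the point set, and one has to be careful with the open/closed bookkeeping of halfspaces both here and in the separation argument of the key step. Everything else is routine.
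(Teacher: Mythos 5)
Your proof is correct, and while it follows the same overall strategy as the paper --- find a fixed point $x\in\cent\{f_i(x)\}$ over $B$ --- it executes the two technical steps differently. Both arguments rest on the Helly-type fact that the central region of the moving configuration, which equals the intersection of the hulls $\conv\{f_i(x):i\in I\}$ over all index sets $I$ with $|I|=n-l+1$, is nonempty; but the paper keeps the map single-valued: it thickens each hull by a small ball, intersects with $B$ to get a body depending Hausdorff-continuously on $x$, takes a continuous selection, applies Brouwer, and then lets $\varepsilon\to 0$ by compactness. You instead treat $\Phi(x)=\cent\{f_i(x)\}\cap B$ as a set-valued map and apply Kakutani, trading the thickening/selection/limit bookkeeping for a closed-graph check, which you carry out correctly via the pigeonhole over index sets and the enlargement of boundary halfspaces; this is arguably cleaner, since it sidesteps the Hausdorff-continuity issues the paper handles only tersely. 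Your use of the hypothesis on $B$ also differs: the paper observes that at least $n-l+1$ of the $f_i(x)$ lie in $B$, so one of the hulls, and hence the whole intersection, is contained in $B$ (giving $\cent\{f_i(x)\}\subseteq B$), whereas your separation argument yields only $\cent\{f_i(x)\}\cap B\neq\emptyset$ --- weaker, but all that Kakutani requires. One small point to tidy: the $f_i(x)$ may coincide, so the configuration is a multiset and the counting measure is not absolutely continuous; thus neither the discrete central point theorem as stated in the introduction nor Section~\ref{centralpoints} applies verbatim, and you should invoke the multiset (counting-with-multiplicity) version, which is exactly what the Helly argument over the index sets of size $n-l+1$ provides, at no extra cost.
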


In other words, $x$ is a central point of $\{f_i(x)\}_{i\in [n]}$.

\begin{proof}
Denote for any $I\subseteq[n]$ the following point set
$$
f_I(x) = \{f_i(x) : i\in I\}.
$$
We have to find such $x$ that
$$
x \in \bigcap_{I\subseteq[n], |I|=n-l+1} \conv f_I(x).
$$
The intersection on the right side is nonempty by Helly's theorem and is contained in $B$ by the hypothesis. If the intersection is a continuous function of $x$ in Hausdorff metric, then a continuous selection (a function) is possible
$$
x\mapsto g(x)\in \bigcap_{I\subseteq[n], |I|=n-l+1} \conv f_I(x).
$$
In this case the Brouwer fixed point theorem gives the required point $x$. 

In the case when there is no Hausdorff metric continuity, we have to act more carefully.

Consider the sets (Minkowski sums with a small ball)
$$
C_I(x) = \conv f_I(x) + B_\varepsilon,
$$
these sets depend continuously on $x$ in Hausdorff metric, their intersection
$$
C(x) = \bigcap_{I\subseteq[n], |I|=n-l+1} C_I(x)\cap B
$$
has a nonempty interior and therefor depends continuously on $x$ in the Hausdorff metric.

Hence as above we obtain a point $x\in B$ such that $x\in C(x)$. When $\varepsilon$ tends to zero, from the compactness considerations we obtain a point $x\in\bigcap_{I\subseteq[n], |I|=n-l+1} \conv
f_I(x)$.
\end{proof}

We also need the following lemma from~\cite{adw1984}.

\begin{lem}
\label{projbounded}
Suppose $\mathcal F=\{h_1, \ldots, h_n\}$ is a set of hyperplanes in $\mathbb R^d$, consider the orthogonal projections $\pi_1, \ldots, \pi_n$ onto the respective hyperplanes. Then there exists a convex body $P$, such that 
$$
\forall i=1,\ldots,n,\ \pi_i(P)\subseteq P.
$$ 
\end{lem}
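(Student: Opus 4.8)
The plan is to produce a bounded subset of $\mathbb R^d$ that is stable under all the projections, then convexify it and slightly fatten it. Fix an arbitrary point $x_0\in\mathbb R^d$ and consider its orbit under the semigroup generated by $\pi_1,\dots,\pi_n$,
$$
\mathcal O=\{\pi_{i_1}\circ\cdots\circ\pi_{i_k}(x_0):k\ge 0,\ i_1,\dots,i_k\in[n]\}.
$$
By construction $\pi_i(\mathcal O)\subseteq\mathcal O$ for every $i$. Suppose for the moment that $\mathcal O$ is bounded. Then $P_0=\clconv\mathcal O$ is a compact convex set, and, since each $\pi_i$ is affine and continuous with $\pi_i(\mathcal O)\subseteq\mathcal O$, we get $\pi_i(P_0)\subseteq P_0$. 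Writing $\pi_i(x)=A_ix+c_i$, where the linear part $A_i=\mathrm{id}-u_iu_i^{\top}$ is the orthogonal projection onto $u_i^{\perp}$ ($u_i$ a unit normal of $h_i$), we have $\pi_i(P_0+B_\varepsilon)=\pi_i(P_0)+A_i(B_\varepsilon)\subseteq P_0+B_\varepsilon$ because $\|A_iv\|\le\|v\|$. Hence $P=P_0+B_\varepsilon$ is a convex body (it contains $x_0+D_\varepsilon$, so it has nonempty interior) with $\pi_i(P)\subseteq P$ for every $i$, as required. So the whole statement reduces to the boundedness of the orbit $\mathcal O$.

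To bound $\mathcal O$ I would use two ingredients. First, each $\pi_i$ is the nearest-point projection onto the convex set $h_i$, hence $1$-Lipschitz, and moreover $\|\pi_i(y)-q\|\le\|y-q\|$ for every $q\in h_i$. Second, general position splits the behaviour of any subfamily $\{h_i:i\in S\}$ into two cases: if $|S|\le d$ the normals $\{u_i:i\in S\}$ are linearly independent, so $\bigcap_{i\in S}h_i$ contains a point $q_S$; if $|S|\ge d+1$ the normals $\{u_i:i\in S\}$ already span $\mathbb R^d$ (any $d$ of them form a basis). Now classify a word $w=\pi_{i_m}\circ\cdots\circ\pi_{i_1}$ by the set $S$ of indices it uses. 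If $|S|\le d$, then $w$ is a composition of projections each fixing $q_S$, whence $\|w(x_0)-q_S\|\le\|x_0-q_S\|$, and since there are finitely many such $S$ this is a uniform bound. If $|S|\ge d+1$, first delete consecutive repeated letters (legitimate since $\pi_i\circ\pi_i=\pi_i$); one can then split the reduced word into blocks, the normals occurring in each block spanning $\mathbb R^d$, and the linear part of such a block has operator norm strictly less than $1$ — if it preserved the length of some unit vector, that vector would be orthogonal to all the normals of the block, hence zero. Combining these contractions with the Lipschitz and common-point estimates bounds $\|w(x_0)\|$ uniformly; this is exactly the lemma quoted from~\cite{adw1984}.

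The genuinely delicate point is the case $|S|\ge d+1$. Reduced words over a fixed index set can be arbitrarily long, and a subproduct of the linear parts taken over a non-spanning index set may have operator norm arbitrarily close to $1$ (the von Neumann alternating-projection phenomenon), so the contraction one extracts each time the word completes a basis must be weighed against the translation accumulated on the intervening non-spanning stretches. Interleaving the ``orbit stays near the flat $\bigcap_{i\in S'}h_i$'' regime for non-spanning $S'$ with the genuine contractions, and making the resulting estimate uniform over all words, is where the real work lies — and is the content of~\cite{adw1984}.
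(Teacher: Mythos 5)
The paper offers no proof of this lemma at all: it is imported verbatim from \cite{adw1984}, so there is no internal argument to compare yours against. Your reduction is correct and is the natural way to pass from the Aharoni--Duchet--Wajnryb theorem (boundedness of all successive projections of a point) to the convex-body form stated here: the closed convex hull of the orbit of one point is invariant because the $\pi_i$ are affine and continuous, and adding a small ball preserves invariance since the linear parts $A_i$ are norm-nonincreasing, which also supplies the nonempty interior. Two caveats. First, the lemma as stated (and the result of \cite{adw1984}) carries no general position hypothesis, while your case $|S|\le d$ uses it essentially: for parallel hyperplanes the common fixed point $q_S$ need not exist, so that case either needs a separate (easy) argument or must be absorbed into the general one; this is harmless for the paper's applications, where general position is assumed, but it narrows the lemma as written. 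Second, as you acknowledge yourself, the genuinely hard step --- a bound on $\|w(x_0)\|$ uniform over all words $w$, balancing the strict contraction gained from spanning blocks against the translations accumulated along arbitrarily long non-spanning stretches --- is not carried out but deferred to \cite{adw1984}. Since the paper rests the entire lemma on that same reference, your proposal is consistent with the paper's treatment, but it should be read as a correct reduction to the cited theorem rather than a self-contained proof.
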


\section{Proofs of the central point theorems}
\label{cptproofs}

\begin{proof}[Proof of Theorem~\ref{dualcpt}]
Let $\mathcal F = \{h_1, h_2, \ldots, h_n\}$. For any $i\in [n]$ define the map $f_i : \mathbb R^d\to\mathbb R^d$ as the orthogonal projection of $x$ onto $h_i$. By Lemma~\ref{projbounded} there exists a compact convex body $B$ that is stable under projections $f_i$.

Now apply Lemma~\ref{cptfp} to the body $B$ and maps $f_i$. The lemma gives a point $x$ such that any halfspace $H\ni x$ contains at least $l=\left\lfloor\dfrac{n+d}{d+1}\right\rfloor$ points of $f_i(x)$.
Since $f_i(x)$ are the orthogonal projections, then for any ray $r$ starting at $x$ there are at least $l$ hyperplanes of $\mathcal F$ that either intersect $r$ or parallel to $r$.

Now we have to show that the ray actually intersects at least $l$ hyperplanes. Consider the unit sphere $S$ with center $x$. Let us project the hyperplanes $\mathcal F$ to the sphere, obtaining the family $\mathcal G$. The family $\mathcal G$ consists of either hemispheres or full spheres (if $x$ is on the corresponding $h\in\mathcal F$). It is already proved that the closures of $\mathcal G$ cover $S$ with multiplicity at least $l$ everywhere, and it is left to show that $\mathcal G$ itself covers $S$ with multiplicity $l$. 

Assume the contrary: $e\in S$ is covered by $k<l$ sets of $\mathcal G$ and is on the boundary of at least $l-k$ sets in $\mathcal G$. Denote the latter sets by $\mathcal H\subset\mathcal G$. It follows from the general position that $|\mathcal H| \le d-1$ and in any neighborhood of $e$ there exists a point that is not contained in any $\cl X$ of some $X\in\mathcal H$. In this case some close enough point $e'$ is contained in the same sets of $\mathcal G$ as $e$ (because the sets are open) and is not on the boundary of any set in $\mathcal G$. Hence, this point is contained in at most $k$ of the sets in $\mathcal G$, that is a contradiction.
\end{proof}

\begin{proof}[Proof of Corollary~\ref{dualtverberg2}]
Let us apply Theorem~\ref{dualcpt} to the family of lines, that gives some point $x$. Let us order the lines circularly $\{l_1, l_2, l_3,\ldots, l_{3n}\}$, corresponding to the order of projections of $x$ to these lines. If $x$ is on some line then we can take any direction as the direction towards this line.

Now the triples $(l_k, l_{k+n}, l_{k+2n})$ (indexes are modulo $3n$) form triangles, each of the triangles containing $x$ (by the central point property of $x$).
\end{proof}

\begin{proof}[Proof of Theorem~\ref{dualcptmes}]
First note that it is enough to prove the theorem for a connected measure. Otherwise a measure can be represented as a limit of measures, that are connected, and have the support inside some fixed compact set. Now the general case follows from the connected case by continuity and compactness reasoning, it is shown below that the set of candidates to be the central point is compact.

Take a point $x\in\mathbb R^d$ and consider the map
$$
\pi_x : \gamma_d^1\to \mathbb R^d,
$$
taking every hyperplane $H$ to the projection of $x$ onto $H$. This map is defined and bijective almost everywhere, so the measures $\lambda_x = {\pi_x}_* \mu$ are defined.

By Lemma~\ref{mesprojcont} these measures depend on $x$ continuously, they are also connected and have compact supports. If we consider the points $x$ in some compact set, then the supports of $\lambda_x$ are all contained in some compact set.

Consider now the setü $\cent\lambda_x$. It depends on $x$ continuously in the Hausdorff metric by Lemmas~\ref{cptcont1} and \ref{cptcont2}, hence there exists a continuous function $f(x)\in \cent\lambda_x$. Now we want to apply Theorem~\ref{fpbundles2} to $f(x)$, to do this we have to show that there exists $M$ such that for any $x$
$$
|x|>M\implies (x, x-f(x)) > 0.
$$

Let us apply Lemma~\ref{layermes} to the measure $\lambda_0$ putting
$\varepsilon = \dfrac{1}{2(d+1)}$, and find the corresponding $\delta$. Let the support of $\lambda_0$ be inside a ball of radius $R$. Let $M > R^2/\delta$, and let $x\in\mathbb R^d$ and $|x|> M$.

Put $H_x = \{y\in\mathbb R^d : (y,x)\ge (x,x)\}$. Let us find the measure of $\lambda_x(H_x)$. Let us transform $H_x$ by $\pi_0\circ\pi_x^{-1}$ to the set $L_x$. It is clear that (the two-dimensional case is sufficient to consider)
$$
L_x = \{y\in\mathbb R^d : (y,x)\ge 0\ \text{and}\ |y-x/2|\ge |x|/2\}.
$$
Hence the intersection $L_x\cap S$ is inside a layer of width $\delta$, and therefore $\lambda_0(L_x) = \lambda_x(H_x)< \varepsilon$. Thus $\cent\lambda_x\cap H_x = \emptyset$ and $(x,x-f(x)) > 0$. 

In this case the fixed point theorem can be applied, giving a point $x\in\cent\lambda_x$.
Similar to the proof of theorem~\ref{dualcpt} this point is exactly the needed point.
\end{proof}

\begin{proof}[Proof of Theorem~\ref{dualctr}]
The measures can be considered connected, similar to the previous proof.

Consider a linear subspace $L\subseteq\mathbb R^d$ of dimension $k+1$. The orthogonal projection onto $L$ takes affine $k$-flats in $\mathbb R^d$ to $l$-flats ($l\le k$) in $L$, and after dropping a set of measure zero, $k$-flats are mapped to $k$-flats.

This projection gives the measures $\nu_{i, L}$ on the set of hyperplanes $\gamma_L^1$, that depend continuously on $L$. As in the previous proof, take a point $x\in L$ and consider the measures $\lambda_{i, L, x}$ on $L$, obtained from $\nu_{i, L}$ by the projection $\pi_x$ in $L$. Their central points depend continuously on $x$ and $L$, and give the respective maps
$$
f_i : \gamma_d^{k+1} \to \gamma_d^{k+1}\quad (i = 1,\ldots, d-k),
$$
that satisfy the hypothesis of Theorem~\ref{fpbundles2} (taking into account Theorem~\ref{fpgrass}). By this theorem we obtain $x$ and $L$ such that $x\in\cent\lambda_{i, L, x}$ for any $i=1,\ldots, d-k$. Now it is obvious that the $d-k-1$-flat, perpendicular to $L$ and passing through $x$ is the required flat.
\end{proof}

\section{Geometric constructions in Tverberg type theorems}
\label{tvgconstr}

Remind the construction in the proof of the topological Tverberg theorem. We follow the two sources. In the book~\cite{mat2003} the construction is described for the case, when the number of subfamilies in the partition is prime. In paper~\cite{vol1996} the case of prime powers is considered with slightly different geometric constructions, it also contains the relevant facts on equivariant cohomology.

Consider a vector space $V$ and a positive integer $n$. Let us make a definition.

\begin{defn} Take $\mathbb R^n$ with coordinates $(t_1,\ldots,t_n)$ and consider its affine subspace $A_n$ defined by
$$
t_1 + t_2 + \ldots + t_n = 1.
$$
If the space $A_n$ has to be considered as linear, we put the origin to $(1/n, 1/n, \ldots, 1/n)$.
\end{defn}

\begin{defn}
For a vector space $V$ and a positive integer $n$ put
$$
J_A^n(V) = nV\oplus A_n,
$$
where $nV$ is the direct sum of $n$ copies of $V$. The space $V$ is embedded into $J_A^n(V)$ by the map
$$
v\mapsto v\oplus v\oplus\ldots\oplus v\oplus (1/n, \ldots, 1/n),
$$
thus giving an orthogonal decomposition
$$
J_A^n(V) = V\oplus D_A^n(V).
$$
\end{defn}

The space $J_A^n(V)$ also has the meaning of the affine hull of the $n$-fold join $V*V*\dots*V$.

If the group $G$ acts on the indexes $[n]$, then it acts on the $n$-fold direct sum $nV$ by permutations, and on $A_n$ by permuting the coordinates. The summand $V$ in $J_A^n(V) = V\oplus D_A^n(V)$ is fixed under this action. If the group $G$ acts transitively on $[n]$, then the representation $D_A^n(V)$ has no trivial summands.

In fact we have to consider groups $G = (Z_p)^k$, for $p$ being a prime. By identifying the set $[n]$ ($n=p^k$) with $G$ we obtain the action of $G$ on $[n]$ by shifts. The representation $D_A^n(V)$ of $G$ has no trivial summands and (see~\cite[Ch. IV, \S 1]{hsiang1975}) its Euler class $e(D_A^n(V))\neq 0\in H_G^*(\pt, Z_p)$.

\section{Topological constructions in Tverberg type theorems}
\label{tvtconstr}

Let $n=p^k$, $p$ be a prime, $G = (Z_p)^k$. Let us remind the definition of $G$-equivariant cohomology (in the sense of Borel) for a $G$-space $X$ with coefficients in a ring $A$
$$
H_G^*(X, A) = H^*\left((X\times EG/G), A\right),
$$
where $EG$ is a homotopy trivial free $G$-$CW$-complex.

Remind the structure of the ring $\Lambda_p(k) = H_G^*(\pt, Z_p))$ (see~\cite[Ch. IV, \S 1]{hsiang1975}). For $p=2$ this is a ring of $Z_2$-polynomials of $k$ generators $u_1,\ldots,u_k$ of degree $1$. For $p\not=2$ this is a tensor product of the polynomial ring in $k$ generators $u_1,\ldots,u_k$ of degree $2$, and an exterior algebra of $k$ generators $v_1,\ldots,v_k$ of degree $1$. The natural equivariant map $X\to \pt$ gives a natural map $\Lambda_p(k)\to H_G^*(X, Z_p)$ for any $G$-space $X$.

Put $S_2(k) = \Lambda_2(k)$ and $S_p(k) = Z_p[u_1,\ldots, u_k]$ for $p\not=2$, these are commutative polynomial rings. We need the following lemma.

\begin{lem}
\label{cohomologymult} For any nonzero $x\in S^n_p(k)$ and an integer $m\ge 0$ there is an element $x'\in \Lambda^m_p(k)$ such that the product $xy\neq 0\in \Lambda^{n+m}_p(k)$.
\end{lem}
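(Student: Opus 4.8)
The plan is to reduce the statement to a fact about the polynomial ring $S_p(k)$ and then handle the extra exterior generators separately for odd $p$. Consider first the case $p = 2$, where $\Lambda_2(k) = S_2(k) = Z_2[u_1,\ldots,u_k]$ is an honest polynomial ring. Here the claim is purely commutative-algebraic: given $0 \neq x \in Z_2[u_1,\ldots,u_k]$ of degree $n$, I must find a homogeneous $x'$ of degree $m$ with $xx' \neq 0$. But a polynomial ring over a field is an integral domain, so $xx' \neq 0$ for \emph{any} nonzero $x'$; taking $x' = u_1^m$ works. So for $p = 2$ the lemma is immediate, and the only real content is the odd-prime case.

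For $p \neq 2$ we have $\Lambda_p(k) = Z_p[u_1,\ldots,u_k] \otimes \Lambda(v_1,\ldots,v_k)$ with $\deg u_i = 2$, $\deg v_j = 1$, and $S_p(k) = Z_p[u_1,\ldots,u_k]$ sits inside as the polynomial part. First I would observe that since the polynomial part is an integral domain, multiplication by a nonzero element of $S_p(k)$ is injective on all of $\Lambda_p(k)$ (write any element of $\Lambda_p(k)$ as a sum $\sum_J f_J(u)\, v_J$ over subsets $J \subseteq [k]$, with $v_J = \prod_{j\in J} v_j$; multiplying by $x \in S_p(k)$ multiplies each coefficient $f_J$ by $x$ inside the domain $Z_p[u]$, hence kills nothing). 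So for the given $x \in S_p^n(k)$, it suffices to exhibit a homogeneous element $x'$ of degree exactly $m$ with nonzero product; by the injectivity just noted, \emph{every} nonzero homogeneous element of degree $m$ will do. Now if $m$ is even, take $x' = u_1^{m/2}$; if $m$ is odd, take $x' = v_1\, u_1^{(m-1)/2}$, which is nonzero and of degree $m$. Since $xx'$ has the coefficient $x \cdot u_1^{\lfloor m/2\rfloor}$ (times $1$ or $v_1$), which is nonzero in $Z_p[u]$, we get $xx' \neq 0 \in \Lambda_p^{n+m}(k)$.

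The one subtlety worth spelling out — and the only place the argument is not completely formal — is the claim that multiplication by a nonzero element of the polynomial part is injective on the full ring with exterior generators. This uses the explicit $Z_p$-basis $\{u^\alpha v_J\}$ of $\Lambda_p(k)$ (monomials in the $u_i$ times squarefree monomials $v_J$), under which $\Lambda_p(k) = \bigoplus_{J \subseteq [k]} Z_p[u_1,\ldots,u_k]\, v_J$ as a free $Z_p[u]$-module; multiplication by $x \in Z_p[u]$ acts diagonally with respect to this decomposition and is injective on each free summand because $Z_p[u]$ is a domain. I expect this decomposition bookkeeping, together with correctly producing a degree-$m$ representative in both parities, to be the main (mild) obstacle; everything else is the integral-domain property of a polynomial ring over a field.
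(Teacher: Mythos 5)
Your argument is correct, and it is essentially the paper's own (the paper simply asserts that the lemma ``follows directly from the description of the rings $\Lambda_p(k)$,'' which is exactly what you spell out: the integral-domain property of $Z_p[u_1,\ldots,u_k]$, the decomposition of $\Lambda_p(k)$ as a free module over it with basis the squarefree monomials $v_J$, and an explicit nonzero element in each degree $m$). Nothing further is needed.
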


The lemma follows directly from the description of the rings $\Lambda_p(k)$.

\begin{lem}
\label{equiveuler} For any $G$-representation in a linear space $V$, the space $V$ can be considered as a $G$-equivariant bundle over $\pt$. If $V$ has no trivial summands, then its Euler class modulo $p$ is in $S_p(k)$.
\end{lem}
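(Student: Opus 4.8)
The plan is to reduce to the case of a single irreducible summand by multiplicativity of the Euler class. Regard $V$ as the equivariant bundle $EG\times_G V\to BG$, so that its equivariant Euler class modulo $p$ is the ordinary mod $p$ Euler class of this bundle, lying in $H^*(BG, Z_p) = H_G^*(\pt, Z_p) = \Lambda_p(k)$. Decompose $V = V_1\oplus\dots\oplus V_r$ into nontrivial irreducible real $G$-representations; then $e(V) = e(V_1)\cdots e(V_r)$ by the Whitney formula, and since $S_p(k)$ is a subring of $\Lambda_p(k)$ it suffices to prove $e(V_j)\in S_p(k)$ for each $j$.

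For $p=2$ there is nothing to prove, since $S_2(k) = \Lambda_2(k)$ is the entire coefficient ring. So assume $p$ is odd. Every complex irreducible of the finite abelian group $G=(Z_p)^k$ is a character $\chi\colon G\to U(1)$, and $\chi\cong\bar\chi$ forces $\chi^2=1$, hence $\chi=1$ because $2$ is invertible mod $p$; thus each nontrivial real irreducible $V_j$ is $2$-dimensional, being the realification of a complex line $L_{\chi_j}$ with $\chi_j$ nontrivial, and it carries the complex orientation. Hence $e(V_j)$ is the reduction mod $p$ of the first Chern class $c_1(EG\times_G L_{\chi_j})\in H^2(BG,\mathbb Z)$. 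Now $BG = (BZ_p)^k$, and since $H^1(BZ_p,\mathbb Z)=0$ the integral cohomology of the product has $H^2(BG,\mathbb Z)\cong (Z_p)^k$ with $Z_p$-basis the classes $\tilde u_i$ pulled back from the $i$-th factor, whose reductions mod $p$ are the degree-$2$ polynomial generators $u_i$ of $\Lambda_p(k)$. Writing $\chi_j$ as $(a_1,\dots,a_k)\in\mathrm{Hom}(G,U(1))\cong(Z_p)^k$ we get $c_1(EG\times_G L_{\chi_j}) = \sum_i a_i\tilde u_i$, so $e(V_j) = \sum_i a_i u_i\in Z_p[u_1,\dots,u_k] = S_p(k)$; multiplying over $j$ gives $e(V)\in S_p(k)$.

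All the inputs here — multiplicativity of the Euler class, its behaviour under realification of complex bundles, and the ring structure of $H^*(B(Z_p)^k, Z_p)$, see \cite[Ch. IV, \S 1]{hsiang1975} and \cite{milsta1974} — are classical, so the argument is mostly bookkeeping. The one point deserving care, which I regard as the crux, is the claim that $c_1$ of the relevant line bundle reduces into the polynomial subring rather than acquiring a component among the degree-$2$ products $v_iv_j$ of the odd-degree exterior generators; equivalently, that $\tilde u_i$ reduces to $u_i$, which amounts to the fact that $u_i=\beta v_i$ is a Bockstein and the products $v_iv_j$ are not in the image of mod $p$ reduction.
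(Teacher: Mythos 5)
Your proposal is correct and follows essentially the same route as the paper's (very terse) proof: nothing to prove for $p=2$ since $S_2(k)=\Lambda_2(k)$, and for odd $p$ a reduction to the $2$-dimensional nontrivial irreducibles, whose Euler classes are linear combinations of the $u_i$, with the details delegated to Hsiang. You simply spell out what the paper cites — realification of characters, $c_1$ reducing to the polynomial generators via the Bockstein — and that bookkeeping, including the crux you flag, is accurate.
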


\begin{proof}
For $p=2$ it is nothing to prove, for $p\neq 2$ any nontrivial irreducible real representation of $G$ is two-dimensional and its Euler class is a linear combination of $u_i\in S_p(k)$, see also~\cite[Ch. IV, \S 1]{hsiang1975}.
\end{proof}

In the statements below, that do not depend on the coefficients, $A$ is an arbitrary coefficient ring.

\begin{lem}
\label{cohomologydim} If $G$ acts freely on an $n$-dimensional simplicial or $CW$-complex $X$, then $H_G^m(X, A)$ is zero for $m>n$.
\end{lem}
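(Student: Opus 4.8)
Lemma~\ref{cohomologydim} asserts: if $G$ acts freely on an $n$-dimensional simplicial or $CW$-complex $X$, then $H_G^m(X, A) = 0$ for $m > n$.

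The plan is to exploit the fact that a free $G$-action allows us to replace the Borel construction $X \times_G EG$ by the genuine orbit space $X/G$, which is again an $n$-dimensional $CW$-complex, and then to invoke the elementary fact that cellular cohomology of an $n$-dimensional $CW$-complex vanishes above degree $n$. Concretely, first I would recall that when $G$ acts freely on $X$, the projection $X \times EG \to X$ is a $G$-map whose induced map on orbit spaces $X \times_G EG \to X/G$ is a fibration with fiber $EG$; since $EG$ is contractible, this is a homotopy equivalence. (Equivalently: $X \times EG \to X$ is a $G$-equivariant homotopy equivalence because $EG$ is contractible, hence $X \times_G EG \to X/G$ is a homotopy equivalence.) Therefore $H_G^m(X, A) = H^m(X \times_G EG, A) \cong H^m(X/G, A)$.

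Next I would observe that the quotient map $X \to X/G$ is a covering map (the action being free, and one may assume properly discontinuous in the simplicial/CW setting, e.g. after passing to a barycentric subdivision so that $G$ acts simplicially without inversions), so $X/G$ inherits the structure of an $n$-dimensional $CW$-complex: each cell downstairs is covered by $|G|$ cells of the same dimension upstairs, and the top dimension is preserved. Then the standard fact that the cellular cochain complex of an $n$-dimensional $CW$-complex is concentrated in degrees $0, 1, \ldots, n$ immediately gives $H^m(X/G, A) = 0$ for $m > n$, and combining with the isomorphism above finishes the proof.

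**Main obstacle.** There is no serious obstacle here; the only point requiring a little care is the claim that $X/G$ is an $n$-dimensional $CW$-complex. For a free simplicial action this is clean after subdividing so that $G$ permutes simplices without fixing any simplex setwise non-identically (so no quotient identifications collapse dimension); for a general free $CW$-action one needs the action to be cellular and the orbit space to carry the quotient CW-structure, which is automatic when $G$ is finite acting freely and cellularly. So the write-up is essentially: $H_G^*(X,A) \cong H^*(X/G,A)$ via contractibility of $EG$, then cellular cohomology of the $n$-dimensional complex $X/G$ vanishes above degree $n$. I would state it in two or three sentences without dwelling on the subdivision technicality.

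\begin{proof}
Since $G$ acts freely on $X$ and $EG$ is contractible, the $G$-map $X\times EG\to X$ (projection) is a $G$-homotopy equivalence, so the induced map of orbit spaces
$$
X\times_G EG = (X\times EG)/G\ \longrightarrow\ X/G
$$
is a homotopy equivalence. Hence $H_G^m(X, A) = H^m(X\times_G EG, A)\cong H^m(X/G, A)$.

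On the other hand, the free action of the finite group $G$ on $X$ is properly discontinuous (after passing to a subdivision in the simplicial case so that $G$ acts without collapsing any cell), so $X\to X/G$ is a covering map and $X/G$ is again an $n$-dimensional $CW$-complex: each cell of $X/G$ is evenly covered by $|G|$ cells of $X$ of the same dimension, and no cells of dimension exceeding $n$ appear. The cellular cochain complex of an $n$-dimensional $CW$-complex is concentrated in degrees $0,\ldots,n$, so $H^m(X/G, A) = 0$ for $m > n$. Combining this with the isomorphism above gives $H_G^m(X, A) = 0$ for $m > n$.
\end{proof}
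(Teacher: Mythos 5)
Your argument is correct and follows essentially the same route as the paper: the free action gives $H_G^m(X,A)\cong H^m(X/G,A)$ (via contractibility of $EG$), and the quotient $X/G$ is again an $n$-dimensional complex, whose cohomology vanishes above degree $n$. The paper states this in one line; your write-up merely supplies the standard details (covering map, quotient CW-structure) and is fine.
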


\begin{proof}
The free action implies that $H_G^m(X, A) = H(X/G, A)$ the latter space being $n$-dimensional.
\end{proof}

We need the following lemma on the cohomology product, that is frequently used in estimating the Lyusternik-Schnirelmann category from the cup length.

\begin{lem}
\label{hom-prod} Let a topological space $X$ be covered by a family of open sets $U_1, U_2,\ldots, U_m$ and let $a_1, a_2,\ldots, a_m\in H^*(X, A)$. If for any $i=1,\ldots, m$
the image of $a_i$ in $H^*(U_i, A)$ is zero then the product $a_1a_2\cdots
a_m = 0$ in $H^*(X, A)$.
\end{lem}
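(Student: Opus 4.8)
The final statement in the excerpt is Lemma~\ref{hom-prod}: if $X$ is covered by open sets $U_1,\dots,U_m$ and $a_i\in H^*(X,A)$ restricts to zero in $H^*(U_i,A)$ for each $i$, then $a_1a_2\cdots a_m=0$ in $H^*(X,A)$. Let me sketch the standard proof.

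The plan is to pass to the long exact sequences of the pairs $(X,U_i)$ and use the multiplicativity of relative cup products. First I would recall that the condition ``$a_i$ restricts to $0$ in $H^*(U_i,A)$'' means, by the long exact sequence of the pair $(X,U_i)$, that $a_i$ is the image of some relative class $\bar a_i\in H^*(X,U_i;A)$ under the natural map $j_i^*\colon H^*(X,U_i;A)\to H^*(X,A)$. Next I would invoke the relative cup product $H^*(X,U_1;A)\otimes\cdots\otimes H^*(X,U_m;A)\to H^*(X,U_1\cup\cdots\cup U_m;A)$, which is compatible with the absolute cup product via the restriction maps; concretely, the product $\bar a_1\smile\cdots\smile\bar a_m$ lives in $H^*(X,U_1\cup\cdots\cup U_m;A)$ and maps to $a_1\cdots a_m$ under the map induced by $(X,\varnothing)\hookrightarrow(X,U_1\cup\cdots\cup U_m)$. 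Since the $U_i$ cover $X$, we have $U_1\cup\cdots\cup U_m=X$, so $H^*(X,X;A)=0$, and the relative product class lives in the zero group. Tracing it back, $a_1\cdots a_m=0$.

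So the argument has three ingredients, carried out in this order: (i) rewrite each hypothesis as ``$a_i$ lifts to $H^*(X,U_i;A)$''; (ii) form the iterated relative cup product landing in $H^*(X,\bigcup_i U_i;A)$; (iii) observe $\bigcup_i U_i=X$ forces this group to vanish, and push the lift back down to conclude $a_1\cdots a_m=0$. One could phrase (ii)--(iii) inductively instead: assuming $a_1\cdots a_{j-1}$ lifts to $H^*(X,U_1\cup\cdots\cup U_{j-1};A)$ and $a_j$ lifts to $H^*(X,U_j;A)$, their relative product gives a lift of $a_1\cdots a_j$ to $H^*(X,U_1\cup\cdots\cup U_j;A)$; after $m$ steps the target is $H^*(X,X;A)=0$.

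The main point to get right — rather than a genuine obstacle — is the bookkeeping for the relative cup product: one must use the version $H^p(X,B;A)\otimes H^q(X,C;A)\to H^{p+q}(X,B\cup C;A)$ valid when $\{B,C\}$ is an excisive couple (e.g.\ both open), and check its compatibility with the absolute product under restriction. For open covers all excision hypotheses are automatic, so no technical trouble arises; everything else is formal diagram-chasing in the long exact sequences of pairs. I would present the inductive version as it keeps the relative-cohomology groups explicit at each stage.
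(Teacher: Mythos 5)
Your argument is correct and is exactly the one the paper intends: lift each $a_i$ to $H^*(X,U_i;A)$ via the long exact sequence and use the relative cup product landing in $H^*(X,U_1\cup\dots\cup U_m;A)=H^*(X,X;A)=0$, which is what the paper summarizes in one sentence as "the classes $a_i$ come from the respective $H^*(X,U_i,A)$" together with the multiplicative property. Your version simply spells out the bookkeeping (excisive couples, the inductive formulation) that the paper leaves implicit.
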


The lemma is obvious, since the classes $a_i$ come from the respective $H^*(X, U_i, A)$, and the conclusion follows by the property of the cohomology multiplication.

Now let us describe the configuration spaces used to (see~\cite{mat2003}) prove the topological and colorful Tverberg theorems.

Consider the $N$-fold join of a discrete set of $n$ points $EG_N = [n]*[n]*\dots*[n]$. This space (see~\cite{mat2003}) is known to be $N-1$-dimensional and $N-2$-connected. In fact it follows from the basic properties of the join.

The action of $G$ on $[n]$ gives a free action of $G$ on $EG_N$. The natural map 
$$
H_G^*(\pt, Z_p)\to H_G^*(EG_N, Z_p)
$$
is injective in dimensions $\le N-1$, it can be shown by considering the Leray-Serre spectral sequence for $\left(EG\times EG_N\right)/G \to BG$, see~\cite[Ch.~III, \S 1]{hsiang1975}.

\begin{defn}
For a simplicial complex $K$ define the \emph{deleted} $n$-fold join
$K_\Delta^{*n}$ by taking the vertex set $V(K)\times[n]$, and the set of simplexes
$$
\sigma=\sigma_1\times\{1\}\cup\sigma_2\times\{2\} \ldots
\cup\sigma_n\times\{n\},
$$
where $\sigma_1,\ldots,\sigma_n$ are pairwise disjoint simplexes of $K$.
\end{defn}

The above space $EG_N$ is a simplicial complex, isomorphic to the \emph{deleted} join of $N-1$-dimensional simplex $(\Delta^{N-1})_\Delta^{*n}$. The latter being the natural configuration space in the topological Tverberg theorem and Theorem~\ref{dualtverbergp}. The group $G$ acts on the join by permuting the summands.

The colorful Tverberg theorem has its own configuration space. Consider the simplicial complex $K(k,t)$ with vertex set $[k]\times[t]$, the first index corresponding to the color. Let the simplexes of $K(k,t)$ be the subsets of $K(k,t)$ that contain at most one vertex of every color.

Now the configuration space for the colorful Tverberg theorem is the $r$-fold deleted join $L(k,t,r) = K(k,t)^{*r}_\Delta$. This space is $rk-2$-connected, when $t\ge 2r-1$ (see~\cite{zivvre1994}). Hence the map
$$
H_G^*(\pt, Z_p)\to H_G^*(L(k,t,r), Z_p)
$$
is injective in dimensions $\le rk-1$ from the Leray-Serre spectral sequence.

\section{Proofs of theorems~\ref{dualtverbergp} and \ref{coldualtverberg}}
\label{tvproofs}

Let us give the proof of Theorem~\ref{dualtverbergp}, the proof of Theorem~\ref{coldualtverberg} is obtained by replacing $\Delta^{n(d+1)-1}$ with $K(d+1, t)$, and the corresponding replacing of the deleted joins.

Put $N=n(d+1)$ and denote the hyperplanes $\{h_i\}_{i=1}^N$. Put, as above, $G=(Z_p)^k$.

Define the map $f_i$ to be the orthogonal projection onto $h_i$. Consider the convex body $B$ stable under all these projections.

The full simplicial complex with the vertex set corresponding to $\{h_i\}$ is identified with $\Delta^{N-1}$.

Take a point $b\in B$. Let us define the map $s_b :(\Delta^{N-1})_\Delta^{*n} = EG_N\to J_A^n(\mathbb R^d)$. First, let us define the map $t_b: \Delta^{N-1}\to \mathbb R^d$ on vertices of $\Delta^{N-1}$ as taking the $i$-th vertex to $f_i(b)-b$, and extend it to a linear map from $\Delta^{N-1}$. Let the map $s_b$ be $n$-fold join of $t_b$. Taking into account the dependence of $s_b$ on $b$, we obtain a continuous map $s :B\times (\Delta^{N-1})_\Delta^{*n} = B\times EG_N\to J_A^n(\mathbb R^d)$.

The map $s$ is $G$-equivariant, where the actions of $G$ on $EG_N$ and $J_A^n(\mathbb R^d)$ are introduced in previous sections. This map can be considered as a section of the $G$-equivariant vector bundle 
$$
B\times EG_N\times J_A^n(\mathbb R^d)\to B\times EG_N.
$$

We are going to prove that $s$ has some zeros. The equivariant Euler class $e(D_A^n(\mathbb R^d))$ has dimension $(d+1)(n-1)$ and is nonzero in $H_G^*(EG_N, Z_p)$, because the natural map of cohomology
$$
S^{(d+1)(n-1)}_p(k) \to H_G^{(d+1)(n-1)} (EG_N, Z_p)
$$
is injective. By Lemma~\ref{cohomologymult} there also exists $e'\in \Lambda^{d}(k)$ such that $e(D_A^n(\mathbb R^d))e'$ is nonzero in $H_G^{n(d+1)-1}(EG_N, Z_p)$.

Let us decompose $J_A^n(\mathbb R^d) = D_A^n(\mathbb R^d)\oplus \mathbb R^d$. The section $s$ is decomposed into respective $s_1\oplus s_2$. The relative Euler class of $s_2$ resides in $H^d(B,\partial B)$ and is nonzero, as it is in the proof of the fixed point theorem. By the multiplicativity and the K\"unneth formula the relative Euler class of $s$ is nonzero in $H_G^{N-1}(B\times EG_N, \partial B\times EG_N, Z_p)$. Hence there exist pairs $b\times y$ such that $s(b, y)=0$, denote the set of all such pairs by $Z$.

Now let us describe the meaning of inclusion $b\times y\in Z$ for $b\in B$ and $y\in EG_N$. The point $y$ is a convex combination of some points of $\Delta^{N-1}$, by the definition of the deleted join
$$
y = c_1x_1\oplus\dots c_nx_n,
$$
the points $x_i\in\Delta^{N-1}$ having pairwise disjoint supports. The equation $s(b,y)=0$ means that 
$$
c_1 = \dots = c_n = 1/n,\quad t_b(x_1)= \dots = t_b(x_n) = b.
$$ 
Let us represent $x_i$ in the barycentric coordinates in $\Delta^{N-1}$
$$
c_ix_i = a_{i1}v_1\oplus\dots\oplus a_{iN}v_N.
$$

The definition of the deleted join is equivalent to the fact, that for any $j\in[N]$ the inequality $a_{ij}\neq 0$ holds for at most one $i\in[n]$. Let us state a lemma.

\begin{lem}
\label{hyperplane-gp}
Let $J\subseteq [N]$, $F=\{t_b(v_j) : j\in J\}$. If $b\in\conv F$ then

1) either $b\in F$;

2) or $|F|\ge d+1$ and $b$ is in the interior of $\conv F$.
\end{lem}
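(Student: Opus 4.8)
The plan is to argue entirely with the displacement vectors from $b$ to its feet of perpendicular on the hyperplanes, and to read off both alternatives from the general position hypothesis. I would write $p_j = f_j(b)$ for the orthogonal projection of $b$ onto $h_j$ (this is $t_b(v_j)$ up to the common translation by $b$ built into the construction), so that $F=\{p_j : j\in J\}$, and put $w_j = p_j - b$. Each $w_j$ is orthogonal to $h_j$, hence parallel to a normal $n_j$ of $h_j$, and $w_j = 0$ exactly when $b\in h_j$, i.e.\ when $b = p_j\in F$. The hypothesis $b\in\conv F$ is the same as $0\in\conv\{w_j : j\in J\}$; choosing a minimal $J'\subseteq J$ with $0\in\conv\{w_j : j\in J'\}$, we obtain weights $\mu_j>0$ with $\sum_{j\in J'}\mu_j=1$ and $\sum_{j\in J'}\mu_j w_j=0$. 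If some $w_j=0$ we are in case~1), so from now on assume every $w_j\neq0$ and let us deduce case~2).

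The only place general position enters is the observation that any $m\le d$ of the hyperplanes of $\mathcal F$ have linearly independent normals. Indeed, otherwise we could enlarge the collection to a set of $d$ hyperplanes of $\mathcal F$ (possible since $N=(d+1)n>d$) whose normals are still dependent, so the common solution set of the $d$ defining linear equations would be empty or positive-dimensional, contradicting that any $d$ hyperplanes of $\mathcal F$ meet in exactly one point. Consequently, for any subset of $J$ of size $\le d$ the corresponding vectors $w_j$ are nonzero multiples of independent normals, hence linearly independent, hence cannot satisfy a nontrivial positive linear relation. Since $\sum_{j\in J'}\mu_j w_j=0$ with all $\mu_j>0$, this forces $|J'|\ge d+1$.

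It remains to show $0\in\int\conv\{w_j : j\in J'\}$. We already have $0\in\conv\{w_j : j\in J'\}$; if $0$ were a boundary point there would be a supporting hyperplane $\{x:(u,x)=0\}$ with $u\neq0$ and $(u,w_j)\ge0$ for all $j\in J'$, and pairing $u$ with $\sum_{j\in J'}\mu_j w_j=0$ together with $\mu_j>0$ would give $(u,w_j)=0$ for every $j\in J'$, i.e.\ all these $w_j$ would lie in the $(d-1)$-dimensional subspace $u^\perp$ --- impossible, since $|J'|\ge d+1$ and any $d$ of the $w_j$ are linearly independent. Hence $0\in\int\conv\{w_j : j\in J'\}$, which after translating by $b$ says that $b$ lies in $\int\conv\{p_j : j\in J'\}$, an open subset of $\conv F$, so $b\in\int\conv F$; and since $\conv\{p_j : j\in J'\}$ has nonempty interior in $\mathbb R^d$, the set $\{p_j : j\in J'\}$ affinely spans $\mathbb R^d$ and therefore contains at least $d+1$ distinct points, whence $|F|\ge d+1$. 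The step requiring the most care is precisely this use of general position --- that the normals of the hyperplanes through the relevant $p_j$ cannot be squeezed into a proper subspace; everything else is routine convexity and Carath\'eodory bookkeeping.
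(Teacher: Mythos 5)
Your proof is correct, and it fills in exactly what the paper leaves unsaid: the paper's entire ``proof'' is the remark that the lemma follows from the general position hypothesis, and your argument (nonzero displacement vectors $w_j$ are parallel to the normals, any $\le d$ normals of hyperplanes in $\mathcal F$ are linearly independent since $d$ of them meet in exactly one point, hence a positive relation forces at least $d+1$ terms and rules out a supporting hyperplane through $b$) is precisely how general position enters. Your reading of $t_b(v_j)$ as $f_j(b)$ up to the translation by $b$ is also the right way to reconcile the paper's slightly inconsistent notation, so nothing further is needed.
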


This lemma follows from the general position hypothesis.

For a given point $b\times y\in Z$ put
$$
J_b = \{j\in[N] : f_j(b)= t_b(v_j) = b\}.
$$

Assume that for any $b\times y\in Z$ the set $J_b$ is nonempty. Define the map $p: Z \to EG_N$ as follows. Take the coordinates of $y$ to be $\{a_{ij}\}$ as above. The map $p$ forgets $b$ and turns to zero all the coordinates $a_{ij}$, except such that $j\in J_b$, and then normalizes the coordinates to have a unit sum. This map is continuous and $G$-equivariant. 

The image of $p(Z)$ is in the $d-1$-dimensional skeleton of $EG_N$, because at most $d$ of the coordinates $a_{ij}$ can be nonzero (they correspond to the hyperplanes containing $b$). This $d-1$-dimensional skeleton is $G$-$ANR$, hence $p$ can be extended to some $G$-invariant neighborhood $U\supseteq Z$.

The image of the element $e'\in \Lambda_p^d(k)$ in $H_G^d(p(U), Z_p)$ is zero by Lemma~\ref{cohomologydim}, and therefore it is zero in $H_G^d(U, Z_p)$. From the description of the Euler class as an obstruction it follows that $e(J_A^n(\mathbb R^d))$ is zero in $H_G^*(B\times EG_N\setminus Z, \partial B\times EG_N, Z_p)$. By Lemma~\ref{hom-prod} the product $e(J_A^n(\mathbb R^d))e'$ should be zero in $H_G^*(B\times EG_N, \partial B\times EG_N, Z_p)$. But it was shown above that $e(D_A^n(\mathbb R^d))e'\neq 0\in H_G(EG_N, Z_p)$, hence by the K\"unneth formula 
$$
e(J_A^n(\mathbb R^d))e'\not=0\in H_G^*(B\times EG_N, \partial B\times EG_N, Z_p).
$$

Hence the assumption was false and for some $b\times y\in Z$ the set $J_b$ is empty. Then the sets
$$
S_i = \{j\in[J] : a_{ij}\neq 0\}
$$
are disjoint, and $\conv t_b(S_i)\ni b$ for any $i\in[n]$. By Lemma~\ref{hyperplane-gp} all these sets have cardinality $d+1$ and $\int\conv t_b(S_i)\ni b$. It is easy to see that the families of hyperplanes $\{h_j\}_{j\in S_i}$ form simplexes, containing $b$ in their interiors.

\section{Some conjectures}
\label{conjectures}

Besides the conjecture, that the dual Tverberg theorem holds for any cardinality of partition, there are another conjectures, related to the above results. The first conjecture interpolates between the case $k=0$, which is the ordinary central point theorem, and $k=n-1$, which is the dual central point theorem.

\begin{con}[The central point theorem for $k$-flats]
There exists a constant $c(k, d) > 0$ with the following property. For any absolutely continuous probabilistic measure $\mu$ with compact support on the set of all affine $k$-flats in $\mathbb R^d$, there exists a point $x$ such that for any $(n-k)$-flat $M\ni x$ we have
$$
\mu(I(M, k)) \ge c(k, d).
$$
\end{con}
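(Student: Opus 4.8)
The plan is to reduce the conjecture to a statement about simultaneous dual central points on the family of all $(k+1)$-dimensional subspaces of $\mathbb R^d$, in the spirit of the proof of Theorem~\ref{dualctr}. Throughout I read ``$(d-k)$-flat $M\ni x$'' as ``$(d-k)$-half-flat $M$ with $x$ on its boundary'', in line with Theorems~\ref{dualcpt}, \ref{dualcptmes}, \ref{dualctr}.

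\emph{Reduction.} As in the proofs of Theorems~\ref{dualcptmes}, \ref{dualctr}, it is enough to treat $1$-convex probabilistic measures with compact support, the general case following by approximation and the continuity lemmas of Sections~\ref{measures}, \ref{centralpoints}; and, since only positivity of $c(k,d)$ is claimed, one may normalise $\suppo\mu$. Identify $k$-flats with $\gamma_d^{d-k}$ as in the paper. For a $(k+1)$-dimensional linear subspace $\Lambda\subseteq\mathbb R^d$ let $\nu_\Lambda$ be the push-forward of $\mu$ under the almost-everywhere-defined orthogonal projection $L\mapsto\pi_\Lambda(L)$; since projection does not increase the distance of a flat from the origin, $\nu_\Lambda$ is a compactly supported measure on the hyperplanes $\gamma_\Lambda^1$ of $\Lambda$, with all supports in one compact set, depending continuously on $\Lambda$ by Lemma~\ref{mesprojcont} (the $k$-flats whose direction meets $\Lambda^\perp$ forming the small exceptional sets). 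A $(d-k)$-half-flat $M$ with $x$ on its boundary has an affine-hull direction $W$ and a cutting unit normal $u\in W$; put $\Lambda_M=W^\perp\oplus\mathbb R u$. A count in general position shows that $\pi_{\Lambda_M}$ carries the $k$-flats of $\suppo\mu$ to hyperplanes of $\Lambda_M$, carries $M$ to the ray $\rho_M$ issuing from $\pi_{\Lambda_M}(x)$ in direction $u$, and that a generic $k$-flat $L$ meets $M$ if and only if $\pi_{\Lambda_M}(L)$ meets $\rho_M$; hence
$$
\mu(I(M,k))=\nu_{\Lambda_M}\bigl(\{\,\bar h\in\gamma_{\Lambda_M}^1:\bar h\ \text{meets}\ \rho_M\,\}\bigr).
$$
Since the correspondence $M\mapsto(\Lambda_M,\rho_M)$ runs over all pairs $(\Lambda,\text{ray from }\pi_\Lambda(x)\text{ in }\Lambda)$ — both $M$ and such pairs being parametrised by a $k$-subspace together with a unit normal to it — the conjecture is equivalent to: \emph{find $x\in\mathbb R^d$ such that for every $(k+1)$-subspace $\Lambda$ the point $\pi_\Lambda(x)$ is a $c$-dual central point of $\nu_\Lambda$ inside $\Lambda$} (in the sense of Theorem~\ref{dualcpt}). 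For $k=d-1$ only $\Lambda=\mathbb R^d$ occurs and this is Theorem~\ref{dualcptmes}; for $k=0$ it is the central point theorem, with $c=\frac1{d+1}$ in both extremes.

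\emph{Machinery and the main obstacle.} One would now like to imitate the proof of Theorem~\ref{dualctr}: work over the Grassmannian $G_d^{k+1}$ with the canonical bundle $\gamma_d^{k+1}$ and produce the point $x$ by Theorem~\ref{fpbundles2}. But with a single measure the set-up degenerates. Applying Theorem~\ref{dualcptmes} inside each $\Lambda$ separately, together with the continuity of the dual-central-point set in $\Lambda$ (the analogue of Lemmas~\ref{cptcont1}, \ref{cptcont2}, proved as in Theorem~\ref{dualcptmes}), already furnishes a continuous section $\Lambda\mapsto p(\Lambda)\in\Lambda$ with $p(\Lambda)$ a $\frac1{k+2}$-dual central point of $\nu_\Lambda$ — but such a section is merely a family of affine $(d-k-1)$-flats $p(\Lambda)+\Lambda^\perp$, in general without a common point, whereas the conjecture needs the section to be \emph{tautological}, $p(\Lambda)=\pi_\Lambda(x)$ for a single $x\in\mathbb R^d$. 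So the real problem is to force the $d$-dimensional family of tautological sections of $\gamma_d^{k+1}$ to meet the set of sections landing in the fibrewise region $\mathcal D=\{(\Lambda,p):p\ \text{is }c\text{-dual central for }\nu_\Lambda\}$; this region is open but, unlike $\cent\mu$ (Lemma~\ref{cptsetstruct}), need not be fibrewise convex, so no continuous-selection argument over $G_d^{k+1}$ is immediate. And unlike Theorem~\ref{dualctr}, where $d-k$ distinct measures supply $d-k$ fibrewise maps and the full power $w_{k+1}(\gamma_d^{k+1})^{d-k-1}\neq 0$ of Theorem~\ref{fpgrass} is exhausted, here there is only ``one map's worth'' of topological room, and the relevant obstruction class sits in too low a degree to force the desired intersection. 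Breaking this deadlock — by a relative-Euler-class computation (Lemma~\ref{eulermult}) over a base that remembers the constraint $p=\pi_\Lambda(x)$ and so outputs a point rather than a section, or by a softer compactness argument over the space of $1$-convex measures yielding a possibly much smaller positive constant $c(k,d)$ — is, I expect, the crux of the conjecture.
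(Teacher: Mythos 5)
This statement is one of the paper's \emph{conjectures}: the paper gives no proof of it, and your text, by its own admission, does not prove it either, so it cannot be accepted as a proof. What you do establish is a correct and clean reduction: with the (d-k)-half-flat reading (which is indeed the reading that interpolates between halfspaces at $k=0$ and rays at $k=d-1$), a half-flat $M$ with boundary through $x$ satisfies $M=\pi_{\Lambda_M}^{-1}(\rho_M)$ for $\Lambda_M=W^\perp\oplus\mathbb Ru$, so a $k$-flat meets $M$ exactly when its projection to $\Lambda_M$ meets the ray $\rho_M$, and hence $\mu(I(M,k))=\nu_{\Lambda_M}(\{\bar h:\bar h\cap\rho_M\neq\emptyset\})$; the correspondence between half-flats through $x$ and pairs (a $(k+1)$-subspace $\Lambda$, a ray in $\Lambda$ from $\pi_\Lambda(x)$) is bijective. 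So the conjecture is equivalent to finding one point $x$ whose projection $\pi_\Lambda(x)$ is a $c$-dual central point of $\nu_\Lambda$ for \emph{every} $(k+1)$-subspace $\Lambda$ simultaneously. This is a sensible reformulation, consistent with the proofs of Theorems~\ref{dualcptmes} and~\ref{dualctr}, and your continuity claims for $\Lambda\mapsto\nu_\Lambda$ are of the same kind as those the paper itself uses (Lemma~\ref{mesprojcont}).

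The gap is exactly the step you flag and then leave open: producing such a simultaneous $x$, or any uniform positive constant $c(k,d)$. Your diagnosis of why the method of Theorem~\ref{dualctr} does not transfer is accurate --- there the $d-k$ measures give $d-k$ fibrewise maps matching the power $w_{k+1}(\gamma_d^{k+1})^{d-k-1}\neq 0$ from Theorem~\ref{fpgrass}, whereas here one measure gives only one map, the tautological sections $\Lambda\mapsto\pi_\Lambda(x)$ form only a $d$-parameter family, and the fibrewise target set of $c$-dual central points need not be convex, so neither a selection argument nor the Euler-class bookkeeping of Sections~\ref{fixedpoints}--\ref{releuler} applies as stated. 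But identifying the obstacle is not overcoming it: neither of your two suggested escape routes is carried out, and the softer compactness idea in particular does not obviously yield positivity of $c(k,d)$, since weak limits of absolutely continuous measures need not be absolutely continuous and the quantity $\max_x\min_M\mu(I(M,k))$ could a priori degenerate along such a sequence. So the status after your write-up is unchanged: the statement remains a conjecture, with your reduction as a plausible first step toward it.
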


The following conjecture would be a dual analogue of the colorful Tverberg theorem in the plane from~\cite{bl1992}.

\begin{con}[The dual colorful Tverberg theorem in the plane]
Suppose $3n$ straight lines of general position are given in the plane. Let the lines be painted into $3$ colors, each color having $n$ lines. Then the lines can be partitioned into $n$ colorful triples so that the triangles, formed by triples, have a common point.
\end{con}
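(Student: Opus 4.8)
The plan is to run the argument of Corollary~\ref{dualtverberg2}, but with a \emph{colourful} partition of the circularly ordered family in place of the rigid one. First apply Theorem~\ref{dualcpt} to the family $\mathcal F$ of $3n$ lines: since $\lfloor(3n+2)/3\rfloor=n$, we get a point $x$ such that every ray from $x$ meets at least $n$ lines. For $\ell\in\mathcal F$ with $x\notin\ell$ let $n_\ell\in S^1$ be the unit vector at $x$ pointing towards $\ell$; the directions of rays from $x$ meeting $\ell$ form the open half-circle $H_\ell=\{v:\langle v,n_\ell\rangle>0\}$, so the conclusion of Theorem~\ref{dualcpt} says that the points $\{n_\ell\}_{\ell\in\mathcal F}$ meet every open half-circle in at least $n$ of them, equivalently every closed half-circle in at most $2n$. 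As in the proof of Corollary~\ref{dualtverberg2}, three lines of general position form a triangle containing $x$ in its interior if and only if $H_{\ell_1}\cup H_{\ell_2}\cup H_{\ell_3}=S^1$, i.e.\ $\{n_{\ell_1},n_{\ell_2},n_{\ell_3}\}$ lies in no closed half-circle, i.e.\ each of the three arcs these points cut out of $S^1$ has angular length $<\pi$. Transporting the colouring to the $n_\ell$'s, it suffices to prove: \emph{a cyclic sequence of $3n$ points on $S^1$ that meets every closed half-circle in at most $2n$ of them, coloured with three colours $n$ points each, admits a partition into $n$ rainbow triples, each cutting $S^1$ into three arcs of length $<\pi$.}

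In the uncoloured case the triples $\{n_k,n_{k+n},n_{k+2n}\}$ (indices $\bmod\ 3n$, cyclic order) work, because an arc of length $\ge\pi$ contains an open half-circle and hence at least $n$ of the points in its interior, whereas each of these arcs contains only $n-1$ — this is precisely the argument of Corollary~\ref{dualtverberg2}. The difficulty is that this rigid partition need not be rainbow, and it is essentially the \emph{only} partition that the cyclic order alone forces: if one asks that every arc of a rainbow triple contain at most $n-1$ of the remaining points, a count shows each contains exactly $n-1$, pinning the triple down to $\{n_k,n_{k+n},n_{k+2n}\}$. (Indeed one can build cyclic colourings, say with colour pattern $1,2,1,2,3,3$ for $n=2$, satisfying the half-circle condition for which no rainbow partition makes all three arcs strictly short, for \emph{that} choice of $x$.) Hence the proof must either exploit the actual angular positions of the $n_\ell$, or the freedom in choosing $x$ — Theorem~\ref{dualcpt} produces only \emph{some} admissible point, and the set of admissible $x$ is a full region, analogous to the convex bodies of Lemma~\ref{cptsetstruct}, so a ``good'' $x$ may perhaps be selected (e.g.\ as a common central point of three colour-weighted measures, in the spirit of Theorem~\ref{dualctr}).

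I see two routes. The combinatorial one: fix $x$ and prove the boxed claim directly — pick a generic direction $u$, split $S^1$ into the two open half-circles about $u$ and $-u$ (each carrying between $n$ and $2n$ of the points), and assemble the triples by a sweep/exchange procedure that keeps every triple spread over all of $S^1$, closing the argument with Hall's theorem or a parity count on the bipartite ``rainbow triple vs.\ surrounds $x$'' incidence. This is the exact dual of the argument of Bárány and Larman for the primal planar colourful Tverberg theorem~\cite{bl1992}, and making that transfer go through is, I expect, the heart of the matter: their exchanges are performed on points in convex position, and it is not at all clear that they respect the extra metric constraint (arc length $<\pi$) that appears on the dual side.

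The topological route parallels the proofs of Theorems~\ref{dualtverbergp} and~\ref{coldualtverberg}: replace the deleted join of $\Delta^{N-1}$ by the configuration space of \emph{colourful} partitions of $\mathcal F$ into $n$ triples, take its product with the projection body $B$ of Lemma~\ref{projbounded}, build the $G$-equivariant section vanishing exactly when all $n$ triangles contain the chosen point of $B$, and try to show the relative Euler class is nonzero (as in Section~\ref{tvproofs}, invoking Theorem~\ref{fpbundles2}). Here the obstruction is twofold: the symmetry group of an $n$-part partition is large, so a Tverberg-type Euler-class computation is available only for $n$ a prime power — the very limitation already present in Theorem~\ref{dualtverbergp} — and, more severely, the relevant space is the \emph{tight} colourful configuration space (colour classes of size $n$, not $2r-1$), whose connectivity is too low for the spectral-sequence injectivity of Section~\ref{tvtconstr}. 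Forcing a genuine colourful \emph{partition}, rather than the Živaljević--Vrećica-style ``$t\ge 2r-1$'' conclusion of Theorem~\ref{coldualtverberg}, is exactly what makes the primal statement of~\cite{bl1992} a strictly harder theorem, and I expect its dual to require the same extra combinatorial input beyond the Euler class; this is the step I would expect to be the real bottleneck.
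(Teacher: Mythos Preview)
The statement you are attempting to prove is stated in the paper as a \emph{conjecture}, not a theorem: it appears in Section~\ref{conjectures} alongside another open problem, and the paper offers no proof or proof sketch for it. There is therefore nothing in the paper to compare your proposal against.

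Your proposal, in turn, is not a proof but an honest reconnaissance of the obstacles. You correctly identify that the rigid partition $\{l_k,l_{k+n},l_{k+2n}\}$ of Corollary~\ref{dualtverberg2} need not be rainbow, and you even exhibit (for $n=2$) a cyclic colour pattern where no rainbow partition works for the given central point $x$---so the combinatorial route genuinely requires either exploiting angular data or varying $x$. On the topological side you correctly note the two standard obstructions: the prime-power restriction on $n$, and the fact that the tight colourful configuration space (colour classes of size exactly $n$, not $2r-1$) has connectivity too low for the equivariant Euler-class argument of Section~\ref{tvproofs} to go through. Both diagnoses are accurate and match why the paper leaves the statement open.

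In short: there is no gap to name, because you have not claimed a proof; and there is no paper proof to compare to, because none exists. Your write-up is a reasonable account of why the conjecture resists the methods of the paper, but it should be presented as such rather than as a proof proposal.
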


\section{Acknowledgments}

The author thanks A.Yu.~Volovikov and V.L.~Dol'nikov for the discussion of these results, and I.~B\'ar\'any for pointing out the references for Lemma~\ref{projbounded}.

\end{document}